\documentclass[11pt, reqno]{amsart}
\usepackage{amsmath, amsthm, amscd, amsfonts, amssymb, graphicx, color}
\usepackage[bookmarksnumbered, colorlinks, plainpages]{hyperref}
\numberwithin{equation}{section}
\DeclareMathOperator*{\essinf}{ess\,inf}
\DeclareMathOperator*{\esssup}{ess\,sup}

\newtheorem{theo}{Theorem}[section]
\newtheorem{defi}[theo]{Definition}

\newtheorem{prop}[theo]{Proposition}
\newtheorem{nota}[theo]{Notation}
\newtheorem{coro}[theo]{Corollary}
\newtheorem{lem}[theo]{Lemma}
\newtheorem{rem}[theo]{Remark}

\renewenvironment{proof}[1][Proof]{\textbf{#1. } }{\ \rule{0.5em}{0.5em}}

\begin{document}
\setcounter{page}{1}

\title[Normed K\"{o}the space structure and  K\"{o}the duals of Fofana spaces $\cdots$]{Normed K\"{o}the space structure and  K\"{o}the duals of variable-exponent Fofana spaces and their preduals} 

\author[N. Diarra et al]{ Nouffou Diarra$^{1*}$ . Pokou Nagacy$^2$ . B\'{e}renger Akon Kpata$^3$} 
\maketitle

\address{$^{1}$ Laboratoire de Math\'{e}matiques et Applications, UFR Math\'{e}matiques et Informatique, Universit\'{e} F\'{e}lix Houphou\"{e}t-Boigny, 22 BP 582 Abidjan 22, C\^ote d'Ivoire.}\\
e-mail : nouffoud$@$yahoo.fr  , ORCID : https://orcid.org/0000-0003-1123-506X\\

\address{ $^{2}$ Laboratoire des Sciences et Technologies de l'Environnement, UFR Environnement, Universit\'{e}
	Jean Lorougnon Gu\'{e}d\'{e}, BP 150 Daloa, C\^{o}te d'Ivoire.}\\
e-mail : pokounagacy@yahoo.com , ORCID :  https://orcid.org/0009-0004-2769-4804 \\

\address{ $^{3}$ Laboratoire de Math\'ematiques et Informatique, UFR Sciences Fondamentales et Appliqu\'ees,
               	Universit\'e Nangui Abrogoua, 02 BP 801 Abidjan 02, C\^ote d'Ivoire.}\\
e-mail : kpata\_akon@yahoo.fr  , ORCID :  https://orcid.org/0009-0005-6849-251X\\

\address{\textbf{*Correspondence:} Nouffou Diarra \textnormal{(nouffoud$@$yahoo.fr)}}\\







\textbf{Keywords:} Fofana spaces - K\"{o}the space - Predual and K\"{o}the dual - Variable exponent - Fatou property - Absolutely continuous norm - Reflexivity.\\

\textbf{Mathematics Subject Classification:} 42B35 . 46B10 . 46B40 . 46E30.


\begin{abstract}
 Fofana spaces, introduced on the basis of Wiener amalgam spaces, have attracted some interest recently and have found applications in the field of partial differential equations. In this paper, we study the normed K\"{o}the structure of the predual spaces $\mathcal{H}(p^{\prime}(\cdot),q^{\prime},\alpha^{\prime})$ of Fofana spaces with variable exponent $(L^{p(\cdot)}, \ell^{q})^{\alpha}$, where $1\leq p(\cdot) \leq \alpha \leq q\leq \infty$. As an application of this investigation, we identify the K\"{o}the duals and K\"{o}the biduals of these two spaces, and we obtain a predual of  $\mathcal{H}(p^{\prime}(\cdot),q^{\prime},\alpha^{\prime})$. Finally, we prove that neither  $(L^{p(\cdot)}, \ell^{q})^{\alpha}$ nor $\mathcal{H}(p^{\prime}(\cdot),q^{\prime},\alpha^{\prime})$ is reflexive.
\end{abstract} 


\section{Introduction} \label{Sec1}

 Lebesgue, Lorentz, Orlicz and various Morrey-type spaces constitute some of the most important examples of function spaces arising in modern analysis. Their flexibility in describing different forms of integrability has made them indispensable in harmonic analysis, interpolation theory, operator theory, and the analysis of partial differential equations (see, for example, \cite{Ben-Sha,Petteri-2019,Sawano-Fazio-Hakim-2020} and the references therein). A unifying framework for many of these spaces is provided by the theory of Banach function spaces and, more generally, K\"{o}the function spaces, whose lattice structure allows one to investigate not only norm and integrability properties but also order-theoretic and duality phenomena. 
Note that the theory of normed K\"{o}the function spaces has been initiated by G. K\"{o}the in the thirties and further developed by W. A. J. Luxemburg \cite{Lux-1955}, A. C. Zaanen \cite{Zaanen}, C. Bennett and R. C. Sharpley \cite{Ben-Sha}, and many others. 
 Although the usefulness of this theory is not immediately apparent in classical Lebesgue spaces case, due to the very simple structure of Lebesgue spaces, it has proved to be one of the main tools of modern functional analysis. Its abstract setting allows many results established for classical Lebesgue spaces to be extended to much broader classes of function spaces. 
 
Within the framework of the theory of normed K\"{o}the function spaces, two fundamental structural properties are the absolute continuity of the norm and the Fatou property. The interplay between these two concepts is particularly significant.  On the one hand, the  absolute continuity of the norm provides a precise description of the interaction between the norm and the measure structure of the underlying space. It guarantees that the contribution of functions supported on sets of sufficiently small measure becomes negligible in norm, which is fundamental for density, approximation and duality arguments. On the other hand, the Fatou property ensures that monotone limits of uniformly bounded positive sequences remain inside the space and that the norm behaves continuously under monotone convergence. Consequently, the Fatou property plays a decisive role in the completeness of K\"{o}the spaces and in the duality theory.
 Together, the absolute continuity of the norm and the Fatou property provide the analytical foundation required to extend several classical theorems from Lebesgue spaces to more general Banach function spaces. These properties are particularly relevant for amalgam spaces, which combine local and global integrability conditions and therefore exhibit a richer structure than classical Lebesgue spaces. Consequently, determining whether a given amalgam space possesses the structure of a K\"{o}the function space, and establishing the precise conditions under which its norm has the Fatou property or is absolutely continuous, are natural and significant questions. Such an investigation provides a systematic framework for understanding the duality and approximation properties of amalgam spaces.
 
 In this paper, we are concerned with some amalgam spaces with variable exponent as well as  their  predual spaces. We aim to study the normed K\"{o}the structure of these spaces and, subsequently, to establish some duality results concerning them. First, we shall provide a brief historical overview of the development of these spaces. 

A variable exponent on $\mathbb{R}^d$ is a measurable function $p$ from $\mathbb{R}^d$ to $[1,\infty]$. In order to distinguish between variable and constant exponents, we shall always denote variable exponent by $p(\cdot)$.
In the remainder of this paper we shall use the following notations. \\
$\bullet$ The conjuguate exponent $p^{\prime}(\cdot)$ of $p(\cdot)$ is defined by the formula $\frac{1}{p^{\prime}(\cdot)}=1-\frac{1}{p(\cdot)}$ with the convention $\frac{1}{\infty}=0$.\\
$\bullet$ $\mathcal{P}(\mathbb{R}^d)$ denotes the set of all variable exponents $p(\cdot)$ on $\mathbb{R}^d$ such that $1<p_{-} \leq p(\cdot)\leq p_{+}<\infty$, where
\begin{align*}
	p_{-}=\essinf_{x \in  \mathbb{R}^d} p(x)  \mbox{ and } p_{+}=\esssup_{x \in  \mathbb{R}^d} p(x).
\end{align*}
$\bullet$ For any measurable subset $\Omega$ of $\mathbb{R}^d$, $|\Omega|$ and $\chi_{\Omega}$ stand for its Lebesgue measure and its characteristic function, respectively.

Let $p(\cdot)$ be a variable exponent on $\mathbb{R}^d$.
The Lebesgue space with variable exponent $L^{p(\cdot)}:=L^{p(\cdot)}(\mathbb{R}^d)$ consists of all measurable functions $f$ such that for some $\lambda>0$,
\begin{align*}
	\int_{\mathbb{R}^d}\left( \frac{|f(x)|}{\lambda }\right)^{p(x)}dx<\infty.
\end{align*}
The space $L^{p(\cdot)}$ becomes a Banach space when equipped with the Luxemburg norm
\begin{align*}
	\| f\|_{p(\cdot)}:=\inf \left\{ \lambda>0 :\int_{\mathbb{R}^d}\left( \frac{|f(x)|}{\lambda }\right)^{p(x)}dx \leq 1 \right\}.
\end{align*}
If  $p(\cdot)=p$ is a constant function then $\| \cdot\|_{p(\cdot)}$ coincides with the classical  Lebesgue norm $\| \cdot\|_{p}$ and so $L^{p(\cdot)}$ is equal to the classical Lebesgue space $L^{p}$.  
The space  $L^{p(\cdot)}_{\rm loc}:=L^{p(\cdot)}_{\rm loc}(\mathbb{R}^d)$ is defined by
\begin{align*}
	L^{p(\cdot)}_{\rm loc}=\left\{ f : f\chi_{K}\in L^{p(\cdot)}, \mbox{ for all compact subsets $K$ of $\mathbb{R}^d$}\right\}.
\end{align*} 

Let $f \in L^{p(\cdot)}_{\rm loc}$, $1\leq q \leq \infty$ and $r>0$.  We set 
\begin{align}
	_{r}\|f\|_{p(\cdot),q}:=\left \| \left\{ \|f\chi_{I_{k}^{r}}\|_{p(\cdot)} \right\}_{k\in \mathbb{Z}^d}\right\|_{\ell^{q}}=
	\left\{
	\begin{array}{rl}
		&\displaystyle \left(\sum_{k\in \mathbb{Z}^d} \|f\chi_{I_{k}^{r}}\|^{q}_{p(\cdot)} \right)^{\frac{1}{q}} \;\;\;\mbox{if}\;\; q<\infty ,\\
		&\displaystyle\sup_{k\in \mathbb{Z}^d} \|f\chi_{I_{k}^{r}}\|_{p(\cdot)} \;\;\;\;\;\;\;\;\;\;\;\;\mbox{if}\;\; q =\infty,
	\end{array}
	\right.
\end{align}
where
\begin{align*}
	I_{k}^{r}=\displaystyle \prod_{j=1}^{d}\left[ rk_{j},r(k_{j}+1) \right)\;, \;\;\;\; k=(k_{1},...,k_{d})\in \mathbb{Z}^{d}.
\end{align*}
When $r=1$, we shall simply write  $\|f\|_{p(\cdot),q}$ instead of $_{1}\|f\|_{p(\cdot),q}$.
The variable exponent  amalgam space $\left(L^{p(\cdot)},\ell^{q}\right):=(L^{p(\cdot)}, \ell^{q})(\mathbb{R}^d)$ is defined by  
\begin{align*}
	(L^{p(\cdot)}, \ell^{q})= \left\{ f\in L^{p(\cdot)}_{\rm{loc}}\;:\;\|f\|_{p(\cdot),q}<\infty \right\}.
\end{align*}
If $p(\cdot)=p$ is a constant exponent then $(L^{p(\cdot)}, \ell^{q})$ coincides with the classical Wiener amalgam space
$(L^{p}, \ell^{q})$, introduced by N. Wiener \cite{Wie-1926} since 1926. Although, the amalgam space $(L^{p}, \ell^{p})$ coincides with the usual Lebesgue space $L^{p}$,
for any $(\alpha,\rho)\in (0,\infty]\times (0,\infty) $, the dilation operator $St_{\rho}^{(\alpha)}$, defined by
\begin{align}\label{eq.2.4}
	St_{\rho}^{(\alpha)}f(x)= \rho^{-\frac{d}{\alpha}}f\left(\rho^{-1} x \right), \;\; f \in  L^{1}_{\rm{loc}} \mbox{ and }x\in \mathbb{R}^{d},
\end{align}
is not isometric in proper amalgam spaces, contrary to Lebesgue spaces. In order to compensate for this shortfall, in 1988, I. Fofana \cite{Fof-1988} introduced the function spaces $(L^{p}, \ell^{q})^{\alpha} \; (1\leq p \leq \alpha \leq q\leq \infty)$, called after Fofana spaces, which consist of  functions $f\in \left(L^{p},\ell^{q}\right)$ satisfying 
\begin{align*}
\sup_{\rho >0}\; \left \|St_{\rho}^{(\alpha)}f \right\|_{p,q}<\infty.
\end{align*}
 The Fofana space $(L^{q}, \ell^{p})^{\alpha}$ can be viewed as a bridge connecting the Lebesgue space $L^{\alpha}$ with the Morrey space $\mathcal{M}_{q}^{\alpha}$.  Actually, I. Fofana \cite{Fof-1988} proved that,  for $1\leq q < \alpha \leq \infty$ fixed and $p\in [\alpha,\infty]$, the spaces $(L^{q}, \ell^{p})^{\alpha}$ form a chain of distinct Banach spaces starting  with $L^{\alpha}=(L^{q}, \ell^{\alpha})^{\alpha}$  and ending by $\mathcal{M}_{q}^{\alpha}=(L^{q}, \ell^{\infty})^{\alpha} $. We recall that the Morrey space $\mathcal{M}_{q}^{\alpha} \;(1\leq q \leq \alpha < \infty)$ is defined as the set of all  measurable functions $f$  such that
\begin{align}
	\|f\|_{\mathcal{M}_{q}^{\alpha}}:=\sup_{x\in \mathbb{R}^{d},\,r>0}  r^{d\left(\frac{1}{\alpha}-\frac{1}{q}\right)}\left(\int_{Q(x,r)} |f(y)|^{q} dy\right)^{\frac{1}{q}}
\end{align}
is finite, where $Q(x,r)$ denotes the cube centred at $x \in \mathbb{R}^{d}$ with side length $r$, whose sides are parallel to the axes of coordinates.
Many classical results in Fourier analysis, obtained in Lebesgue or Morrey spaces, have been extended in the setting of Fofana spaces. For background on these spaces we refer the reader to \cite{Coul-Fof-2019,Dia-Fof-2023,Dia-Fof-2024,Dosso-Fof-San,Feuto-BP,Feuto-JFA,Fofana 2001,Kpata-Fofana,San-Fof,Wei} and  the references therein.  

The Fofana spaces  $(L^{q}, \ell^{p})^{\alpha}$ were studied in recent years quite intensively and systematically. In 2019, H. G. Feichtinger and J. Feuto \cite{Fei-Feu} introduced  their predual spaces $\mathcal{H}(p^{\prime},q^{\prime},\alpha^{\prime})$, where for  $1\leq s \leq \infty$, $s'$ denotes  the conjugate exponent of $s$ : $\;\frac{1}{s'}=1-\frac{1}{s}$. Later, in 2021, N. Diarra and I. Fofana \cite{Dia-Fof-2021} investigated the normed K\"{o}the structure of  $\mathcal{H}(p^{\prime},q^{\prime},\alpha^{\prime})$ and obtained duality results, and the boundedness of some classical operators on both $(L^{p}, \ell^{q})^{\alpha}$ and $\mathcal{H}(p^{\prime},q^{\prime},\alpha^{\prime})$.
Furthermore, Fofana spaces have been generalized in several directions, including the incorporation of a variable exponent $p(\cdot)$ in place of the constant exponent $p$. Their predual spaces are also known. Let us recall their definitions.

\begin{defi}
	Let $p(\cdot)$ be a variable exponent on $\mathbb{R}^d$ and $ 1 \leq \alpha , q \leq \infty$. 
	The variable exponent  Fofana space $\left(L^{p(\cdot)},\ell^{q}\right)^{\alpha}:=\left(L^{p(\cdot)}, \ell^{q}\right)^{\alpha}(\mathbb{R}^d)$ is defined by 
	\begin{align*}
		\left(L^{p(\cdot)}, \ell^{q}\right)^{\alpha}= \left\{ f\in L^{p(\cdot)}_{\rm{loc}}\;:\;\|f\|_{p(\cdot),q,\alpha}<\infty \right\},
	\end{align*} 
	where
	\begin{align}
		||f||_{p(\cdot),q,\alpha}=\sup_{\rho >0}\; \left \|St_{\rho}^{(\alpha)}f \right\|_{p(\cdot),q}.
	\end{align}
\end{defi}

\begin{defi}
	Let $p(\cdot)$ be a variable exponent on $\mathbb{R}^d$ and $ 1 \leq q \leq \alpha \leq p(\cdot)$.
	\begin{enumerate} 
		\item  A sequence $\left\{(c_{n},\rho_{n},f_{n})\right\}_{n\geq 1}$ of elements of $\;\mathbb{C}\times (0, \infty)\times (L^{p(\cdot)}, \ell^{q})$ is called a $\mathfrak{h}$-decomposition of a function $f$  if $f=\displaystyle\sum_{n\geq 1} c_{n}St_{\rho_{n}}^{(\alpha)}f_{n}$ in the sense of $L^{p(\cdot)}_{\rm{loc}}$ and
		\begin{align*}
			\left\{
			\begin{array}{rl}
				&\|f_{n}\|_{p(\cdot),q}\leq 1\;,\;\;\;\;\;n\geq 1,\\\\
				&\displaystyle \sum_{n\geq 1}|c_{n}|< \infty.
			\end{array}
			\right.
		\end{align*}
		\item The space $\mathcal{H}(p(\cdot),q,\alpha):=\mathcal{H}(p(\cdot),q,\alpha)(\mathbb{R}^d)$ is defined as the set of all $f \in L^{p(\cdot)}_{\rm{loc}}$  whose set of  $\mathfrak{h}$-decompositions is non-empty.
		\item The space $\mathcal{H}(p(\cdot),q,\alpha)$ can be endowed with the norm $\widetilde{\|\cdot \|}_{\mathcal{H}(p(\cdot),q,\alpha)}$ defined, for all $f\in \mathcal{H}(p(\cdot),q,\alpha)$, by
		\begin{align*}
			\widetilde{\|f \|}_{\mathcal{H}(p(\cdot),q,\alpha)}=\inf \left\{\displaystyle \sum_{n\geq 1}|c_{n}| : f=\displaystyle\sum_{n\geq 1} c_{n}St_{\rho_{n}}^{(\alpha')}f_{n}\right\}.
		\end{align*}
\end{enumerate}
\end{defi} 

In the setting of both variable exponent Fofana spaces and their preduals, the reader is referred to \cite{Dia-2023,yang-zhou-23} for basic properties and the boundedness properties of classical operators, including the Hardy-Littlewood maximal operator, fractional operators, and Calder\'{o}n-Zygmund operators.
However, to the best of our knowledge, these spaces have yet to be 
investigated from the viewpoint of K\"{o}the function space theory. Thereby, an important issue is to understand their normed  K\"{o}the structure. 

The objective of this paper is to present a systematic study of the normed K\"{o}the space structure of $\mathcal{H}(p(\cdot),q,\alpha)$, as well as variable exponent Fofana spaces. Mainly, we investigate  the absolute continuity of the norm, Fatou property, separability, K\"{o}the dual,  K\"{o}the bidual  and reflexivity in the framework of these spaces.


The paper is organized as follows. In Section \ref{Sec2}, we recall the necessary background on normed K\"{o}the  and  variable exponent spaces. Section \ref{Sec3} is devoted to the Fatou property and absolute continuity of the norm in Fofana spaces with variable exponent. In Section \ref{Sec4} we investigate the normed K\"{o}the structure of $\mathcal{H}(p(\cdot),q,\alpha)$, including the absolute continuity of the norm, Fatou property and separability. 
Section \ref{Sec5} deals with  duality and reflexivity questions.


\section{Preliminaries} \label{Sec2}

\subsection{Background on normed K\"{o}the spaces}  \label{Subsec2.1}
 By $L^{0}:=L^{0}(\mathbb{R}^d)$, we denote the set of equivalence classes (modulo equality almost everywhere) of  measurable complex-valued functions on $\mathbb{R}^d$. $L^{0}_{+}$ stands for the set of all nonnegative elements of $L^{0}$.
Here, we use the terminology from the theory of Banach function spaces introduced by Zaanen in \cite[Chapter 15]{Zaanen}.

\begin{defi} \label{defi 2.1} (Function norm)
	\begin{enumerate}
\item A function norm on $\mathbb{R}^d$ is a map $\sigma$ of  $L^{0}_{+}$ into $[0, \infty]$ such  that, for any  elements $f$ and $g$ of $L^{0}_{+}$ and  any real number $\lambda \geq 0$, we have:
\begin{enumerate}
\item	$\sigma(f)=0\;\Longleftrightarrow \; f=0$  in $L^{0}$ (strict positivity);
\item   $\sigma(\lambda f)=\lambda\;\sigma(f)$ (homogeneity);
\item	$\sigma(f+g)\leq \sigma(f)+\sigma(g)$ (triangle inequality);
\item   $f\leq g \mbox{ in }L^{0}\Longrightarrow \sigma(f)\leq \sigma(g)$ (lattice property).
\end{enumerate}
\item	A function norm $\sigma$ on $\mathbb{R}^d$ is said to be saturated if every measurable subset $E$ of $\mathbb{R}^{d}$, with positive measure $|E|$, contains a measurable subset $F$ such that $0<|F|$ and $\sigma(\chi_{F})<\infty$.
\item A function norm $\sigma$ on $\mathbb{R}^d$ has the Riesz-Fischer property whenever, for any sequence $\left( f_{n}\right)_{n\geq 1}$ of elements of $L^{0}_{+}$, if  $\;\displaystyle \sum_{n\geq 1}\sigma(f_{n})<\infty$ then  $\sigma \left( \displaystyle \sum_{n\geq 1}f_{n}\right) <\infty$.	
\end{enumerate}
\end{defi}

\begin{defi} (Normed K\"{o}the space).
	Let $\sigma$ be a function norm on $\mathbb{R}^d$.
	\begin{enumerate}
	\item  The set $X^{\sigma}=\left\{ f\in L^{0}:\sigma(|f|)<\infty \right\}$ is called the normed K\"{o}the space on $\mathbb{R}^d$ defined by $\sigma$.
	\item A Banach function space on $\mathbb{R}^d$  is  a  normed K\"{o}the space $X^{\sigma}$ on $\mathbb{R}^d$ such that $(X^{\sigma},\|\cdot\|_{X^{\sigma}})$ is a Banach space, where the norm $\|\cdot\|_{X^{\sigma}}$ on $X^{\sigma}$ is defined by
	\begin{align*}
		\|f\|_{X^{\sigma}}=\sigma(|f|)\;,\;\;\;\;\;f\in X^{\sigma}.
	\end{align*}
\end{enumerate}
\end{defi}

\begin{defi} \label{defi 2.2}
	Let $\sigma$ be a function norm on $\mathbb{R}^d$. The space $X$ is the corresponding normed K\"{o}the space  and $\|\cdot\|_{X}=\sigma(|\cdot|)$.
	\begin{enumerate}
	\item $X$ satisfies the Fatou property if  for any sequence $(f_{n})_{n\geq 1}$ of elements of $X$, 
		\begin{align*}
			\left[0\leq f_{n} \uparrow f \mbox{ in }\;L^{0}\;\mbox{ and } \displaystyle \sup_{n\geq 1}\|f_{n}\|_{X}<\infty \right]\Longrightarrow \left[ f\in X\;\mbox{ and } \displaystyle \lim_{n\rightarrow \infty} \|f_{n}\|_{X}=\|f\|_{X}\right].
		\end{align*}
	\item Let us assume that $\sigma$ is saturated.
		\begin{enumerate}
	\item  A function $f$ of $X$ is of absolutely continuous norm if $\displaystyle \lim_{n\rightarrow \infty}\|f_{n}\|_{X}=0$ for every sequence $(f_{n})_{n\geq 1}$ in $X$  such that 
	\begin{align*}
	\left\{
	\begin{array}{rl}
		&|f|\geq f_{n}\geq f_{n+1}\;\;\mbox{ in }\;L^{0}\;,\;\;\;\;n\geq 1 \\\\
		&\displaystyle\lim_{n\rightarrow\infty}f_{n}=0\;\mbox{ in }\;L^{0}.
	\end{array}
	\right.
\end{align*}
   \item  $\sigma$ is said to be  an absolutely continuous norm (or $X$ has an absolutely continuous norm) if every element of $X$ is of absolutely continuous norm.
\end{enumerate}
\end{enumerate}
\end{defi}

\begin{defi} (K\"{o}the dual space). 	Let $X$ be a normed K\"{o}the space on $\mathbb{R}^d$. 
	\begin{enumerate}
	\item The K\"{o}the dual space (or associate space) $X'$ of $X$ is  defined to be the collection of all elements $g$ of $L^{0}$ such that
\begin{align}
	\|g\|_{X'} :=\sup\left\{ \int_{\mathbb{R}^{d}} \left|g(x)f(x)\right|dx\;:\;f \in X \mbox{ and } \|f\|_{X}\leq 1 \right\}<\infty.
\end{align}
\item  The K\"{o}the bidual space (or second associate space) $X''$ of $X$ is  defined to be the collection of all elements $g$ of $L^{0}$ such that
\begin{align}
	\|g\|_{X''} :=\sup\left\{ \int_{\mathbb{R}^{d}} \left|g(x)f(x)\right|dx \;:\;f \in X' \mbox{ and } \|f\|_{X'}\leq 1 \right\}<\infty.
\end{align}
	\end{enumerate}
\end{defi}

\begin{rem}
	Let $X$ be a normed K\"{o}the space on $\mathbb{R}^d$, $g\in X'$ and for any $f\in X$
	 \begin{align}
		T_{g}(f)=\int_{\mathbb{R}^{d}}f(x)g(x)dx.
	\end{align}
	It is well-known that $g\mapsto T_{g}$ is an isometric linear map of $X'$ into  the topological dual space $X^{*}$ of $X$  (see \cite[Theorem 2 \S \; 69 ]{Zaanen}).
	Therefore we shall look $X'$ as a closed subspace of $X^{*}$ by identifying any element $g$ of $X'$ with $T_{g}$.
\end{rem}

Let us recall the following results. 

\begin{prop} \cite[Chapter 15]{Zaanen}. \label{p2.6}
		Let $\sigma$ be a function norm on $\mathbb{R}^d$ and $X$ be the corresponding normed K\"{o}the space  such that $\|\cdot\|_{X}=\sigma(|\cdot|)$.
\begin{enumerate}
	\item $X$ is a Banach space if and only if $\sigma$ has the Riesz-Fischer property.
	\item The K\"{o}the dual space $X'$ of $X$, equipped with $\|\cdot\|_{X'}$, is a Banach function space satisfying the Fatou property.
	\item The K\"{o}the bidual space $X''$ of $X$ coincides with $X$ if and only if $X$ has the Fatou property.
	\item Assume that $\sigma$ is saturated and $f$ is an element of $ X$. Denote by $X_{\rm a}$ the set of all elements of $X$ of absolutely continuous norm.
	\begin{enumerate}
	\item $f$ belongs to $ X_{\rm a}$ if  and only if, for every  sequence  $(f_{n})_{n\geq 1}$  of elements of $X$ and any measurable function $g$ such that, on $\mathbb{R}^{d}$, $|f_{n}|\leq |f|\;(n\geq 1)$ and $\displaystyle \lim_{n\rightarrow \infty} f_{n}= g$,
	then we have  $\displaystyle \lim_{n\rightarrow \infty}\|f_{n}-g\|_{X}=0.$
    \item $f$ belongs to $ X_{\rm a}$ if  and only if
	$\displaystyle \lim_{n\rightarrow \infty}\|f\chi_{E_{n}}\|_{X}=0\;$ for any non-increasing 
	sequence $(E_{n})_{n\geq 1}$  of subsets of $\mathbb{R}^{d}$ such that $ \left|\displaystyle \bigcap_{n\geq 1}E_{n}\right|=0$.
		\item  The topological dual space $X^{*}$ and the K\"{o}the dual space $X'$ of $X$ coincide  if and only if $X=X_{\rm a}$, that is  $\sigma$ is an absolutely continuous norm.
		\item The topological dual space $X_{\rm a}^{*}$ of $X_{\rm a}$ can be identified with the K\"{o}the dual space $X'$ of $X$   if and only if the carrier of $X_{\rm a}$ is the whole set $\mathbb{R}^{d}$, where the carrier of $X_{\rm a}$ is defined to be  $\mathbb{R}^{d}\backslash {F}$ with $F$ being the maximal subset of $\mathbb{R}^{d}$ on which every element of $X_{\rm a}$ vanishes almost everywhere.
		\item $X$ is reflexive if and only if both $X$ and $X'$ have absolutely continuous norms and satisfy the Fatou property.
	\end{enumerate}
\end{enumerate}	
\end{prop}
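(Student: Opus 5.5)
This proposition collects standard facts from Zaanen's theory of normed Köthe spaces (Chapter 15 of \cite{Zaanen}), so the plan is to reduce each item to the definitions and to a few classical arguments rather than to anything specific to Fofana spaces.

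For (1), I would argue as follows. If $\sigma$ has the Riesz--Fischer property, take an absolutely summable series $\sum f_n$ in $X$. Then $\sigma(\sum|f_n|)<\infty$, so $\sum|f_n|<\infty$ a.e., and the pointwise sum exists. The tail estimate $\sigma(\sum_{n>N}|f_n|)\le\sum_{n>N}\sigma(|f_n|)$ gives convergence in norm; for this I first need the countable triangle inequality for $\sigma$, which follows from Riesz--Fischer applied to the tails. Conversely, if $X$ is complete, then for $f_n\ge0$ with $\sum\sigma(f_n)<\infty$ the partial sums converge in norm to some $g$. A subsequence converges a.e., so $g=\sum f_n$ and $\sigma(g)<\infty$.

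For (2), $\sigma'(g)=\sup\{\int|gf|:\|f\|_X\le1\}$ is clearly homogeneous, subadditive and monotone. Strict positivity uses saturation: if $g\ne0$ on a set of positive measure, that set contains some $F$ with $0<|F|$ and $\sigma(\chi_F)<\infty$. The Fatou property and Riesz--Fischer for $\sigma'$ both follow from the monotone convergence theorem, after interchanging the two suprema.

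Item (3) is the Lorentz--Luxemburg theorem. The inclusion $X\subset X''$ with $\|f\|_{X''}\le\|f\|_X$ is immediate from Hölder's inequality. If $X''=X$ isometrically, then $X$ inherits the Fatou property from (2). For the converse I would use Hahn--Banach separation in the $L^1$-type duality. Given $0\le f$ with $\sigma(f)>1$, I first reduce to $f$ bounded with support of finite $\sigma$-measure, using saturation and the Fatou property. Then I separate $f$ from the convex, order-closed unit ball, truncated to a set $E$, inside $L^1(E)$. This produces a $g$ with $\int fg>1\ge\sup_{\|h\|\le1}\int|hg|$, hence $\|f\|_{X''}>1$. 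This converse is the main obstacle: the unit ball must be shown to be closed in $L^1(E)$ via Fatou, and the separating functional must be represented by an $L^\infty(E)$ function.

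For (4), I would handle the parts in turn.
\begin{enumerate}
\item[(a)] and (b) follow by dominated-convergence-type arguments: take $f_n=|f|\chi_{E_n}$, and conversely use Egorov's theorem combined with the lattice property.
\item[(c)] If $X=X_{\rm a}$, then every $\phi\in X^*$ defines the countably additive measure $E\mapsto\phi(\chi_E h)$ on sets where $h\in X$ is positive. By Radon--Nikodym this gives a density $g$, and $\phi=T_g$ follows by density of simple functions in $X_{\rm a}$, itself a consequence of (a). Conversely, if some $f\notin X_{\rm a}$, I would build a singular functional vanishing on $X_{\rm a}$ by Hahn--Banach, using the fact that $X_{\rm a}$ is a closed ideal not containing $f$.
\item[(d)] applies the argument of (c) to $X_{\rm a}$; the carrier condition is exactly what is needed for the density $g$ to be determined everywhere.
\item[(e)] combines (c) for $X$ and for $X'$ with (3): reflexivity forces $X^*=X'$ and $X'^*=X''=X$. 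Conversely, these identifications make the canonical embedding surjective.
\end{enumerate}
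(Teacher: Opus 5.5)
The paper gives no proof of this proposition; it only cites Zaanen, Chapter 15. Your sketch is therefore a reconstruction of textbook results, and most of it follows the standard route. The converse in 4(c), however, does not work as written. A nonzero continuous functional vanishing on the closed ideal $X_{\rm a}$ need not be singular. For $X=L^\infty$ one has $X_{\rm a}=\{0\}$, so every $T_g$ with $g\in L^1$ annihilates $X_{\rm a}$. Hahn--Banach gives you no way to rule out that the functional you produced is of this form; whether the carrier of $X_{\rm a}$ is full is exactly what is at stake.

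An argument that works runs as follows. Assume $X^*=X'$, and take $0\le f\in X$ and $E_n\downarrow$ with $|\bigcap_n E_n|=0$. For each $g\in X'$, $\int f\chi_{E_n}g\to 0$ by dominated convergence, so $f\chi_{E_n}\to 0$ weakly. By Mazur, for every $n$ and $\epsilon>0$ some convex combination $\sum_{k=n}^{N}t_kf\chi_{E_k}$ has norm $<\epsilon$. This combination dominates $f\chi_{E_N}$, so $\|f\chi_{E_m}\|_X<\epsilon$ for all $m\ge N$, and 4(b) gives $f\in X_{\rm a}$.

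The same ingredient is missing in 4(e). The step ``reflexivity forces $X^*=X'$'' is precisely the nontrivial claim that a reflexive normed K\"othe space has absolutely continuous norm, and you do not justify it. It follows along the same lines:
\begin{enumerate}
\item In reflexive $X$, the bounded decreasing sequence $f\chi_{E_n}$ has a weakly convergent subsequence.
\item The positive cone is convex and norm closed, hence weakly closed, so the limit $y$ satisfies $0\le y\le f\chi_{E_n}$ for every $n$. Hence $y=0$.
\item The Mazur argument above then gives $\|f\chi_{E_n}\|_X\to 0$.
\end{enumerate}
Only after this can you apply 4(c) to $X$, and then, since $X'=X^*$ is again reflexive, to $X'$.

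There are also some smaller points.
\begin{enumerate}
\item In (1), applying the Riesz--Fischer property to the tails only shows that they have finite norm, not that their norms tend to $0$. Group the series into finite blocks $u_k$ with $\sigma(u_k)<4^{-k}$ and apply the property to $w=\sum_k 2^ku_k$. This gives $\sigma(\sum_{j\ge k}u_j)\le 2^{-k}\sigma(w)\to 0$.
\item In (3), Zaanen's framework has no axiom of the type $\int_E|h|\le C_E\|h\|_X$, so the truncated unit ball need not lie in $L^1(E)$. For example, take $\sigma(f)=\int|f(x)|\,|x|\,dx$, $E=\{|x|<1\}$ and $h(x)=|x|^{-d}\chi_{E}(x)$. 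Separate $f$ instead from $\{h\in L^1(E):\|h\|_X\le 1\}$ with $|E|<\infty$; this set is $L^1(E)$-closed by the Fatou property. Then pass to the whole unit ball through the truncations $\min(|h|,n)\chi_E$ and monotone convergence.
\item In 4(b), Egorov is unnecessary, and on $\mathbb{R}^d$ it would need finite measure. The sets $\{f_n>\epsilon|f|\}$ decrease to a null set, and $f_n\le\epsilon|f|+|f|\chi_{\{f_n>\epsilon|f|\}}$.
\item You are right that strict positivity of $\sigma'$ in (2) needs saturation, which the statement as quoted omits.
\item In 4(d) you also need the isometric identity $(X_{\rm a})'=X'$. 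It comes from an exhaustion $E_n\uparrow\mathbb{R}^d$ with $\chi_{E_n}\in X_{\rm a}$, which is where the full carrier enters, together with monotone convergence. You also say nothing about the converse implication.
\end{enumerate}
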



\subsection{Background on variable exponent spaces}  \label{Subsec2.2}

Variable exponent Lebesgue spaces have many properties in common with the  classical Lebesgue spaces, but they also differ in some ways. For instance, we have the following results.

\begin{prop} \cite{Cruz-Fior-2013} \label{p2.1}
	Let $p(\cdot), q(\cdot) \in \mathcal{P}(\mathbb{R}^d)$ and $\Omega$ a subset of $\mathbb{R}^d$ such that $|\Omega|<\infty$. 
	\begin{enumerate}
		\item  If $f \in L^{p(\cdot)} $ and $g \in L^{p^{\prime}(\cdot)} $ then $fg \in L^{1}$ and 
	\begin{align} \label{eq2.4}
			\int_{\mathbb{R}^{d}}|f(x)g(x)|\,dx \leq  \left( 1+1/p_{-}-1/p_{+}\right) \|f\|_{p(\cdot)}\; \|g\|_{p^{\prime}(\cdot)}.
		\end{align}
		\item  If $p(\cdot) \leq q(\cdot)$  then $L^{q(\cdot)}(\Omega) \subseteq L^{p(\cdot)}(\Omega)$ and 
	\begin{align} \label{eq2.5'}
			\|f\|_{p(\cdot)} \leq \left(1+|\Omega| \right) \, \|f\|_{q(\cdot)}, \quad f\in L^{q(\cdot)}(\Omega).
		\end{align}
		\item  There exist constants $C_{1}, C_{2}>0$ such that
		\begin{align} \label{eq2.5}
	 C_{1} \, \|f\chi_{\Omega}\|_{p_{-}}	\leq \|f\chi_{\Omega}\|_{p(\cdot)} \leq C_{2}\, \|f\chi_{\Omega}\|_{p_{+}}, \quad f\in L^{p(\cdot)}_{\rm loc}.
		\end{align}
		\item We have
		 \begin{align} 	\label{eq2.6}
		 		\|\chi_{\Omega}\|_{p(\cdot)} \leq \max \left( |\Omega|^{\frac{1}{p_{-}}}, |\Omega|^{\frac{1}{p_{+}}} \right).	
		\end{align}
	\end{enumerate}
\end{prop}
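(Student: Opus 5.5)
The four assertions of Proposition \ref{p2.1} are standard facts about variable Lebesgue spaces, and I would derive each from the modular $\varrho_{p(\cdot)}(f)=\int_{\mathbb{R}^d}|f(x)|^{p(x)}dx$ together with the standing bounds $1<p_{-}\le p(\cdot)\le p_{+}<\infty$. The common tool is the unit-ball property: for $f\neq 0$ with finite norm, $\varrho_{p(\cdot)}(f/\|f\|_{p(\cdot)})=1$. Here $p_+<\infty$ makes $\lambda\mapsto\varrho_{p(\cdot)}(f/\lambda)$ continuous, so the infimum in the definition of the Luxemburg norm is attained with equality.

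For (1), normalize so that $\|f\|_{p(\cdot)}=\|g\|_{p'(\cdot)}=1$, apply Young's inequality pointwise,
\begin{align*}
|f(x)g(x)|\le \frac{|f(x)|^{p(x)}}{p(x)}+\frac{|g(x)|^{p'(x)}}{p'(x)},
\end{align*}
and integrate. Since $1/p(x)\le 1/p_-$ and $1/p'(x)=1-1/p(x)\le 1-1/p_+$, both modulars being $1$ gives the constant $1/p_-+1-1/p_+$. The general case then follows by homogeneity.

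For (2), put $\Omega$ as the underlying domain and normalize $\|f\|_{q(\cdot)}=1$, so that $\int_\Omega |f|^{q(x)}\le 1$. I would split $\Omega$ into the set where $|f|\le 1$ and the set where $|f|>1$. On the first set $|f|^{p(x)}\le 1$, and on the second $|f|^{p(x)}\le|f|^{q(x)}$ because $p\le q$. Hence $\varrho_{p(\cdot)}(f\chi_\Omega)\le|\Omega|+1$. Since $1+|\Omega|\ge1$ and $p\ge1$, convexity gives $\varrho_{p(\cdot)}\bigl(f/(1+|\Omega|)\bigr)\le \varrho_{p(\cdot)}(f)/(1+|\Omega|)\le 1$, which is the claimed norm bound.

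For (3), both inequalities are instances of (2) with one exponent constant: take the pair $(p_-,p(\cdot))$ for the left inequality and the pair $(p(\cdot),p_+)$ for the right. The constants are $C_1=(1+|\Omega|)^{-1}$ and $C_2=1+|\Omega|$, which are valid because $p_\pm\in\mathcal P$ trivially.

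For (4), I would test $\lambda=\max(|\Omega|^{1/p_-},|\Omega|^{1/p_+})$ in the definition of the norm. If $|\Omega|\ge1$ then $\lambda=|\Omega|^{1/p_-}$ and $\lambda^{p(x)}\ge|\Omega|^{p(x)/p_-}\ge|\Omega|$. If $|\Omega|<1$ then $\lambda=|\Omega|^{1/p_+}$ and $\lambda^{p(x)}\ge|\Omega|^{p(x)/p_+}\ge|\Omega|$ again. In either case $\int_\Omega\lambda^{-p(x)}dx\le1$, so $\|\chi_\Omega\|_{p(\cdot)}\le\lambda$.

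The only delicate point is the unit-ball property used in (1) and (2), which depends on $p_+<\infty$ and the monotone convergence theorem; everything else is elementary. I expect no real obstacle beyond this.
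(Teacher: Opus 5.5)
Your proposal is correct, and since the paper gives no proof of its own (it only cites the Cruz-Uribe--Fiorenza monograph), your arguments are essentially the standard ones from that source: Young's inequality after normalization for (1), the splitting $\{|f|\le 1\}\cup\{|f|>1\}$ plus convexity of the modular for (2), (3) as the special cases $(p_-,p(\cdot))$ and $(p(\cdot),p_+)$ of (2), and a direct test of $\lambda$ for (4). One small remark: you only need $\varrho_{p(\cdot)}(f/\|f\|_{p(\cdot)})\le 1$, which follows from monotone convergence without using $p_+<\infty$, so the equality you flag as the delicate point is not actually required.
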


The following results are well known for variable exponent  amalgam spaces.
\begin{prop} \cite{Aydin-Unal-2019,VEAS,yang-zhou-23}\label{P2.1}.
	Let $p(\cdot)$ be a variable exponent on $\mathbb{R}^d$ and $1\leq q \leq \infty$.
	\begin{enumerate}
		\item $\left(L^{p(\cdot)},\ell^{q}\right)$, endowed with $\|.\|_{p(\cdot),q}$, is a  Banach linear subspace of $L^{p(\cdot)}_{\rm{loc}}$.
		\item If $f \in \left(L^{p(\cdot)},\ell^{q}\right) $ and $g \in \left(L^{p^{\prime}(\cdot)},\ell^{q^{\prime}}\right) $ then $fg \in L^{1}$ and there exists $C>0$ such that
		\begin{align}
			\int_{\mathbb{R}^{d}}|f(x)g(x)|\,dx \leq C \|f\|_{p(\cdot),q}\; \|g\|_{p'(\cdot),q'}.
		\end{align}
		\item If $p(\cdot) \in \mathcal{P}(\mathbb{R}^d)$ and $1\leq q<\infty$ then the set $\mathcal{C}_{\rm{c}}:=\mathcal{C}_{\rm{c}}(\mathbb{R}^d)$ of all continuous and compactly supported functions on $\mathbb{R}^{d}$ is a dense subspace of $\left(L^{p(\cdot)},\ell^{q}\right)$.  
	\end{enumerate}
\end{prop}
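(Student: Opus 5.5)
The plan is to treat the three assertions separately, reducing each to the corresponding fact for $L^{p(\cdot)}$ on the unit cubes $I_{k}:=I_{k}^{1}$ combined with a fact about $\ell^{q}$. The cubes $I_{k}$, $k\in\mathbb{Z}^d$, are pairwise disjoint and cover $\mathbb{R}^d$. The map $f\mapsto \{f\chi_{I_k}\}_{k}$ therefore identifies $(L^{p(\cdot)},\ell^{q})$ isometrically with the $\ell^{q}$-direct sum of the spaces $L^{p(\cdot)}(I_k)$. Everything should follow from this identification.

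For (1), I would first check that $\|\cdot\|_{p(\cdot),q}$ is a norm. This uses the Luxemburg norm properties on each cube and then Minkowski's inequality in $\ell^{q}$. Note that $\|f\|_{p(\cdot),q}=0$ forces $f\chi_{I_k}=0$ for every $k$. To see that it is a linear subspace of $L^{p(\cdot)}_{\rm loc}$, observe that every compact $K$ meets finitely many $I_k$, so $\|f\chi_K\|_{p(\cdot)}\le \sum_{I_k\cap K\neq\emptyset}\|f\chi_{I_k}\|_{p(\cdot)}<\infty$. For completeness I would use the Riesz--Fischer criterion of Proposition \ref{p2.6}(1), applied to the function norm $\sigma(f)=\|f\|_{p(\cdot),q}$ on $L^0_+$. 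Given $f_n\ge 0$ with $\sum_n\sigma(f_n)<\infty$, the monotone convergence (Fatou) property of the Luxemburg norm gives $\|(\sum_n f_n)\chi_{I_k}\|_{p(\cdot)}\le \sum_n\|f_n\chi_{I_k}\|_{p(\cdot)}$ for each $k$. Minkowski's inequality in $\ell^q$ (in its countable form, again justified by monotone convergence) then yields $\sigma(\sum_n f_n)\le\sum_n\sigma(f_n)$.

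For (2), I would apply the variable Hölder inequality \eqref{eq2.4} on each cube. For general variable exponents, not necessarily in $\mathcal{P}(\mathbb{R}^d)$, the constant $1+1/p_- - 1/p_+$ is bounded by $2$; the standard Hölder inequality in $L^{p(\cdot)}$ holds with a constant at most $2$ (indeed at most $4$ in the fully general case). This gives
\begin{align*}
\int_{\mathbb{R}^d}|fg|\,dx=\sum_{k\in\mathbb{Z}^d}\int_{I_k}|fg|\,dx\le C\sum_{k\in\mathbb{Z}^d}\|f\chi_{I_k}\|_{p(\cdot)}\|g\chi_{I_k}\|_{p'(\cdot)},
\end{align*}
and the discrete Hölder inequality in $\ell^q\times\ell^{q'}$ finishes the argument.

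For (3), which is the main step, I proceed in three reductions. First, for $q<\infty$ the tail $\sum_{|k|>N}\|f\chi_{I_k}\|_{p(\cdot)}^{q}\to0$, so $f\chi_{B_N}\to f$, where $B_N$ is the union of the $I_k$ with $|k|\le N$. Second, on the bounded set $B_N$ the amalgam norm is equivalent to the $L^{p(\cdot)}$ norm, since only finitely many cubes are involved: $\|h\|_{p(\cdot),q}\le (2N+1)^{d/q}\|h\|_{p(\cdot)}$ for $h$ supported in $B_N$. It therefore suffices to approximate $f\chi_{B_N}\in L^{p(\cdot)}$ by $\mathcal{C}_{\rm c}$ functions in $L^{p(\cdot)}$ norm, with supports in a slightly larger bounded set. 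Third, this is the known density of $\mathcal{C}_{\rm c}$ in $L^{p(\cdot)}$ for $p_+<\infty$. I would argue it directly as follows. Since $p_+<\infty$, the norm on $L^{p(\cdot)}$ is absolutely continuous, because modular convergence is equivalent to norm convergence. Hence bounded functions with bounded support are dense, by truncating $|f|\le M$ and using dominated convergence of the modular. Simple functions are then dense, and each $\chi_E$ with $E$ bounded is approximated by $\chi_U$ and then by Urysohn functions, using outer regularity together with the estimate \eqref{eq2.6} on $\|\chi_{U\setminus E}\|_{p(\cdot)}$. The delicate point is exactly this use of $p_+<\infty$; without it, density fails, as in $L^\infty$.
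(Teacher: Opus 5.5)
Your argument is correct. The paper gives no proof of this proposition; it imports it from \cite{Aydin-Unal-2019,VEAS,yang-zhou-23}. So what you have written is a self-contained substitute, not a variant of an in-paper argument.

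Your organizing observation already yields completeness in (1). The map $f\mapsto\{f\chi_{I_k^1}\}_{k\in\mathbb{Z}^d}$ is an isometry of $(L^{p(\cdot)},\ell^{q})$ onto the $\ell^{q}$-direct sum of the spaces $L^{p(\cdot)}(I_k^1)$. It is onto because disjointly supported pieces can be glued, and the glued function is automatically in $L^{p(\cdot)}_{\rm loc}$. Since an $\ell^q$-sum of Banach spaces is Banach, (1) follows at once. Your detour through the Riesz--Fischer criterion of Proposition \ref{p2.6}(1) is also valid. It fits the K\"{o}the-space viewpoint the paper adopts later (compare Corollary \ref{c3.9}), and it gives the countable triangle inequality $\sigma(\sum_n f_n)\le\sum_n\sigma(f_n)$.

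What your route buys over the bare citation is transparency about hypotheses. Parts (1) and (2) hold for arbitrary exponents, with $C$ the H\"{o}lder constant of $L^{p(\cdot)}$. Part (3) uses $q<\infty$ only in the tail step and $p_{+}<\infty$ only in the local density step. The assumption $p_{-}>1$ is never needed.

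Two points deserve tightening.

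In (2), you cannot literally apply \eqref{eq2.4} for general exponents, because it is stated only for $p(\cdot)\in\mathcal{P}(\mathbb{R}^d)$. You need the general H\"{o}lder inequality in variable Lebesgue spaces (see \cite{Cruz-Fior-2013}). Its constant is at most $4$, depending on how the modular is normalized on the set where $p(\cdot)=\infty$. Your parenthetical remark is reaching for this, but as written it conflates the two statements.

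In (3), absolute continuity of the $L^{p(\cdot)}$ norm does not follow from the equivalence of modular and norm convergence alone. You also need dominated convergence applied to the modular $\int_{\mathbb{R}^d}(|f(x)|/\lambda)^{p(x)}dx$. That modular is finite for every $\lambda>0$ precisely because $p_{+}<\infty$. Your truncation step already uses exactly this, so the argument goes through once it is stated explicitly.
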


Let us recall some basic properties of Fofana spaces with variable exponent.

\begin{prop}  \label{p2.9}\cite{Dia-2023,yang-zhou-23}.
	Let $p(\cdot) , p_{1}(\cdot)\in \mathcal{P}(\mathbb{R}^d)$ and $ 1 \leq \alpha , q, q_{1} \leq \infty$.
	\begin{enumerate}
		\item $\left(L^{p(\cdot)},\ell^{q}\right)^{\alpha}$ is non trivial if and only if $ p(\cdot) \leq \alpha \leq q$.
		\item $\left(L^{p(\cdot)},\ell^{q}\right)^{\alpha}$, endowed with $\|.\|_{p(\cdot),q,\alpha}$, is a Banach space  continuously embedded in $\left(L^{p(\cdot)},\ell^{q}\right)$.
    	\item If $p(\cdot)=p$ then $\left(L^{p(\cdot)},\ell^{q}\right)^{\alpha}$ is just the classical Fofana space $\left(L^{p},\ell^{q}\right)^{\alpha}$.
		\item If $ p(\cdot) \leq \alpha \leq q \leq \infty$ then 
		$\left(L^{p(\cdot)},\ell^{q}\right)^{\alpha}$ is a solid space; that is : if $g\in L^{0}$ and  $f\in \left(L^{p(\cdot)},\ell^{q}\right)^{\alpha}$ such that $|g| \leq |f|$  a.e. then $\|g\|_{p(\cdot),q,\alpha}\leq \|f\|_{p(\cdot),q,\alpha}$ and therefore $g$ belongs to $\left(L^{p(\cdot)},\ell^{q}\right)^{\alpha}$. 
		\item If $p(\cdot) \leq p_{1}(\cdot) \leq \alpha \leq q \leq \infty$ then
		\begin{align} \label{eq2.8}
			\|f\|_{p(\cdot),q,\alpha} \leq   2 \, \|f\|_{p_{1}(\cdot),q,\alpha},\;\;\;\;\;\;\; f \in L^{p(\cdot)}_{\rm{loc}}
		\end{align}
		and therefore $\left(L^{p_{1}(\cdot)},\ell^{q}\right)^{\alpha}$ is continuously embedded in $\left(L^{p(\cdot)},\ell^{q}\right)^{\alpha}$.
		\item If $p(\cdot) \leq \alpha \leq q \leq q_{1} \leq \infty$ then
		\begin{align} \label{eq2.9}
			\|f\|_{p(\cdot),q_{1},\alpha} \leq  \|f\|_{p(\cdot),q,\alpha},\;\;\;\;\;\;\; f \in L^{p(\cdot)}_{\rm{loc}}
		\end{align}
		and therefore $\left(L^{p(\cdot)},\ell^{q}\right)^{\alpha}$ is continuously embedded in $\left(L^{p(\cdot)},\ell^{q_{1}}\right)^{\alpha}$.
	\end{enumerate}
\end{prop}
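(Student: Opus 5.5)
The plan is to treat the six assertions separately. Items (3), (4) and (6) and the embedding half of (2) follow directly from the definitions. The nontriviality criterion (1) is the only genuinely delicate point.

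For (3): when $p(\cdot)=p$ is constant, $\|\cdot\|_{p(\cdot)}=\|\cdot\|_{p}$. Hence $\|\cdot\|_{p(\cdot),q}=\|\cdot\|_{p,q}$, and the two definitions coincide.

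For (4): $|g|\le|f|$ a.e. gives $|St^{(\alpha)}_\rho g|\le |St^{(\alpha)}_\rho f|$ for every $\rho>0$. The Luxemburg norm is a lattice norm, so $\|St^{(\alpha)}_\rho g\,\chi_{I_k}\|_{p(\cdot)}\le \|St^{(\alpha)}_\rho f\,\chi_{I_k}\|_{p(\cdot)}$ for each $k$. Taking the $\ell^q$ norm and then the supremum over $\rho$ gives the claim.

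For (6): $\ell^q\hookrightarrow\ell^{q_1}$ with norm at most $1$ when $q\le q_1$. Apply this for each fixed $\rho$ and take the supremum.

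For (5): the key remark is that the exponent is \emph{not} dilated. Thus $\|St^{(\alpha)}_\rho f\,\chi_{I_k}\|_{p(\cdot)}$ is a variable Lebesgue norm over the unit cube $I_k$ with the fixed exponent $p(\cdot)$. Proposition \ref{p2.1}(2) with $|\Omega|=|I_k|=1$ gives
\[
\|St^{(\alpha)}_\rho f\,\chi_{I_k}\|_{p(\cdot)}\le 2\|St^{(\alpha)}_\rho f\,\chi_{I_k}\|_{p_1(\cdot)}.
\]
Summing in $\ell^q$ and taking $\sup_\rho$ yields (\ref{eq2.8}).

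For (2): taking $\rho=1$ gives $\|f\|_{p(\cdot),q}\le\|f\|_{p(\cdot),q,\alpha}$, which is the continuous embedding. The norm axioms are inherited from $\|\cdot\|_{p(\cdot),q}$, since $St^{(\alpha)}_\rho$ is linear. For completeness, take a Cauchy sequence $(f_n)$ in $(L^{p(\cdot)},\ell^q)^\alpha$. It is Cauchy in $(L^{p(\cdot)},\ell^q)$, hence converges there to some $f$ by Proposition \ref{P2.1}(1). For each fixed $\rho$, the dilation $St^{(\alpha)}_\rho$ is bounded on $(L^{p(\cdot)},\ell^q)$: it maps unit cubes into finitely many cubes of side $\rho$, controlled by Proposition \ref{p2.1}. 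So $St^{(\alpha)}_\rho f_n\to St^{(\alpha)}_\rho f$ there. Passing to the limit $m\to\infty$ in $\|St^{(\alpha)}_\rho(f_n-f_m)\|_{p(\cdot),q}\le\varepsilon$, and then taking $\sup_\rho$, gives $f_n\to f$ in $\|\cdot\|_{p(\cdot),q,\alpha}$.

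For the sufficiency in (1), assume $p(\cdot)\le\alpha\le q$ and test $f=\chi_{[0,1)^d}$, for which $St^{(\alpha)}_\rho f=\rho^{-d/\alpha}\chi_{[0,\rho)^d}$.
\begin{itemize}
\item If $\rho<1$, only $I_0$ is met. Using (\ref{eq2.6}) with measure $\rho^d<1$, the norm is at most $\rho^{-d/\alpha}\rho^{d/p_+}\le1$, because $p_+\le\alpha$.
\item If $\rho\ge1$, at most $(\rho+1)^d\le (2\rho)^d$ unit cubes are met, each contributing at most $\rho^{-d/\alpha}$. The $\ell^q$ norm is therefore bounded by $2^{d/q}\rho^{d/q-d/\alpha}\le 2^{d}$, because $q\ge\alpha$.
\end{itemize}
Hence the space is nontrivial.

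For the necessity in (1), the main obstacle, I would proceed by contradiction, assuming some $f\neq0$ lies in the space.
\begin{itemize}
\item If $q<\alpha$: by (\ref{eq2.5}), $\|f\|_{p(\cdot),q,\alpha}\ge C_1\|f\|_{p_-,q,\alpha}$. This reduces to the constant-exponent Fofana space, which is trivial when $q<\alpha$ by Fofana's theorem. Directly: for large $\rho$, a fixed piece of $f$ of positive mass is spread over about $\rho^d$ cubes, and the norm grows like $\rho^{d(1/q-1/\alpha)}\to\infty$.
\item If $p(\cdot)\le\alpha$ fails: then $\{p>\alpha+\delta\}$ has positive measure for some $\delta>0$. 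Since dilation does not move the exponent, I would choose $\rho$ so that $St^{(\alpha)}_\rho f$ restricted to a small cube inside this set has positive mass concentrated there. I would then compare the $L^{p(\cdot)}$ norm on that cube with an $L^{\alpha+\delta}$ norm, using the reverse direction of Proposition \ref{p2.1}(2) on a set of measure below $1$. Letting $\rho\to0$ along a Lebesgue-point argument should give a blow-up of order $\rho^{d(1/(\alpha+\delta)-1/\alpha)}$.
\end{itemize}
Making this localisation rigorous, given that the exponent is evaluated at $x$ while $f$ is evaluated at $x/\rho$, is the step I expect to require the most care. If necessary I would instead follow the argument in \cite{Dia-2023,yang-zhou-23}.
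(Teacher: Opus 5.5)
The paper does not prove this proposition; it quotes it from the literature, so your proposal has to stand on its own. The following parts are correct:
\begin{itemize}
\item items (3)--(6);
\item the norm axioms and the embedding in (2);
\item the sufficiency half of (1);
\item the case $q<\alpha$ of necessity (comparison with $p_-$ on unit cubes, then Fofana's classical theorem).
\end{itemize}
Two steps, however, do not work.

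\textbf{Completeness in (2).} Your claim that $St^{(\alpha)}_\rho$ is bounded on $(L^{p(\cdot)},\ell^{q})$ is false for non-constant $p(\cdot)$. The dilation moves $f$ but not the exponent, so a singularity that is harmless where $p$ is small can be carried to where $p$ is large. Take $d=1$, with $p=3$ on $[2,4)$ and $p=2$ elsewhere. The function $f(y)=|y-\tfrac{3}{2}|^{-2/5}\chi_{[1,2)}(y)$ lies in $L^{p(\cdot)}$. But $f(x/2)=2^{2/5}|x-3|^{-2/5}\chi_{[2,4)}(x)$ has infinite $L^{3}$-modular near $x=3$, so $St^{(\alpha)}_{2}f\notin L^{p(\cdot)}_{\rm loc}$. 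Proposition \ref{p2.1} cannot give such a bound, because passing through constant exponents would need $\|g\chi_{I}\|_{p_+}\lesssim\|g\chi_{I}\|_{p(\cdot)}$. The same example contradicts the dilation inequality of Lemma \ref{lem3.10}, so you cannot invoke that either.

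The repair avoids boundedness altogether. Extract a subsequence $f_{n_j}\to f$ a.e.: convergence in $L^{p(\cdot)}(I_k)$ gives convergence in measure on each unit cube, then diagonalize over $k$. Then $St^{(\alpha)}_\rho f_{n_j}\to St^{(\alpha)}_\rho f$ a.e.\ for every $\rho$. The Fatou property of the Luxemburg norm and Fatou's lemma in $\ell^{q}$ give $\|St^{(\alpha)}_\rho(f-f_n)\|_{p(\cdot),q}\le\liminf_{j}\|St^{(\alpha)}_\rho(f_{n_j}-f_n)\|_{p(\cdot),q}\le\varepsilon$ for all $n\ge N_\varepsilon$ and all $\rho>0$. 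This yields both $f\in(L^{p(\cdot)},\ell^{q})^{\alpha}$ and convergence in $\|\cdot\|_{p(\cdot),q,\alpha}$.

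\textbf{Necessity in (1) when $p(\cdot)\le\alpha$ fails.} The localisation you flag as delicate cannot be carried out, because $St^{(\alpha)}_\rho$ is centred at the origin. As $\rho\to0$ mass concentrates only at $0$. On a fixed cube $Q$ away from $0$, $St^{(\alpha)}_\rho f$ merely samples $f$ on $\rho^{-1}Q$, so nothing forces concentration inside $\{p>\alpha+\delta\}$.

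In fact, with the definition used in this paper the implication is false. Take $1<p_0\le\alpha\le q$ with $\alpha<\infty$, let $E=[2,3]^d$, and set $p=\alpha+1$ on $E$ and $p=p_0$ elsewhere. Your own test function $f=\chi_{[0,1)^d}$ still has finite norm:
\begin{itemize}
\item for $\rho<1$, the cube $[0,\rho)^d$ lies where $p=p_0$, so your first bullet gives the bound $\rho^{d(1/p_0-1/\alpha)}\le1$;
\item your second bullet ($\rho\ge1$) never used $p(\cdot)\le\alpha$, since every unit-cube piece of $\rho^{-d/\alpha}\chi_{[0,\rho)^d}$ has norm at most $\rho^{-d/\alpha}$ whatever the exponent.
\end{itemize}
Hence the space is nontrivial although $p_+=\alpha+1>\alpha$.

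What your concentration idea does prove is triviality when $p\ge\alpha+\delta$ a.e.\ on a neighbourhood of the origin. If $|f|\ge c>0$ on a bounded set $A$ with $|A|>0$, then for small $\rho$ one has $\|\chi_{\rho A}\|_{p(\cdot)}\ge|\rho A|^{1/(\alpha+\delta)}$. Hence $\|St^{(\alpha)}_\rho f\|_{p(\cdot),q}\gtrsim\rho^{d(1/(\alpha+\delta)-1/\alpha)}\to\infty$. So the ``only if'' part of (1) needs a corrected hypothesis: a condition on $p(\cdot)$ near the origin, not a global one. Falling back on the cited references will not close this gap under the present definition.
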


We  have  the result below.

\begin{prop} \cite{yang-zhou-23}. \label{p2.11}
	Let $p(\cdot) \in \mathcal{P}(\mathbb{R}^d)$ and $ p(\cdot) \leq \alpha \leq q \leq \infty$.
	Then a predual  of  $\left(L^{p(\cdot)},\ell^{q}\right)^{\alpha}$ is the space $\mathcal{H}(p^{\prime}(\cdot),q^{\prime},\alpha^{\prime})$ in the following sense: \\
	The operator $T: g \mapsto T_{g}$, defined by 
	\begin{align} \label{eq2.11}
		T_{g}(f)= \int_{\mathbb{R}^{d}}f(x)g(x)\,dx \;,\;g \in \left(L^{p(\cdot)},\ell^{q}\right)^{\alpha} \mbox{ and } f \in \mathcal{H}(p^{\prime}(\cdot),q^{\prime},\alpha^{\prime}),
	\end{align}
	is an isometric isomorphism of $\left(L^{p(\cdot)},\ell^{q}\right)^{\alpha}$ into  $\mathcal{H}(p^{\prime}(\cdot),q^{\prime},\alpha^{\prime})^{*}$.
\end{prop}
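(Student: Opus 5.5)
The plan is to prove the two inequalities $\|T_g\|\lesssim\|g\|_{p(\cdot),q,\alpha}$ and $\|g\|_{p(\cdot),q,\alpha}\lesssim\|T_g\|$, and then surjectivity, reducing every step to the known duality $(L^{p'(\cdot)},\ell^{q'})^{*}=(L^{p(\cdot)},\ell^{q})$ through a scaling identity. Since $p_->1$ and $q\geq\alpha\geq p(\cdot)$, we have $q>1$, so $q'<\infty$, and $p'(\cdot)\in\mathcal{P}(\mathbb{R}^d)$. Hence $\mathcal{C}_{\rm c}$ is dense in $(L^{p'(\cdot)},\ell^{q'})$ by Proposition \ref{P2.1}(3). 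The basic tool is the identity obtained by the change of variables $x=\rho y$:
\begin{align*}
\int_{\mathbb{R}^d} St^{(\alpha')}_{\rho}h(x)\,g(x)\,dx=\int_{\mathbb{R}^d}h(y)\,St^{(\alpha)}_{\rho^{-1}}g(y)\,dy .
\end{align*}
It holds because $d-\frac{d}{\alpha'}=\frac{d}{\alpha}$.

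\emph{Boundedness.} Let $g\in(L^{p(\cdot)},\ell^{q})^{\alpha}$ and let $f=\sum_n c_nSt^{(\alpha')}_{\rho_n}f_n$ be an $\mathfrak{h}$-decomposition of $f$. Combining the identity with Proposition \ref{P2.1}(2) gives
\begin{align*}
\int|St^{(\alpha')}_{\rho_n}f_n\, g|\leq C\|f_n\|_{p'(\cdot),q'}\sup_{\rho>0}\|St^{(\alpha)}_{\rho}g\|_{p(\cdot),q}\leq C\|g\|_{p(\cdot),q,\alpha}.
\end{align*}
Consequently $\sum_n|c_n|\int|St^{(\alpha')}_{\rho_n}f_n\,g|<\infty$. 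Together with convergence in $L^{p'(\cdot)}_{\rm loc}$, this shows $fg\in L^1$, and the integral $\int fg$ may be computed term by term. The term-by-term computation is justified on each cube by local H\"older (Proposition \ref{p2.1}(1)) and then by monotone convergence in the cubes. Taking the infimum over decompositions yields $|T_g(f)|\leq C\|g\|_{p(\cdot),q,\alpha}\,\widetilde{\|f\|}_{\mathcal{H}}$. By the same argument $T_g$ is independent of the chosen decomposition, and $T$ is injective.

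\emph{Lower bound.} Fix $\rho>0$. By the amalgam duality, $\|St^{(\alpha)}_{\rho}g\|_{p(\cdot),q}$ is comparable to $\sup\left|\int h\,St^{(\alpha)}_{\rho}g\right|$, the supremum being taken over $h\in\mathcal{C}_{\rm c}$ with $\|h\|_{p'(\cdot),q'}\leq1$. For such $h$, the function $St^{(\alpha')}_{\rho^{-1}}h$ has the one-term decomposition $(1,\rho^{-1},h)$. Hence it lies in the unit ball of $\mathcal{H}(p'(\cdot),q',\alpha')$, and the identity gives $\int h\,St^{(\alpha)}_{\rho}g=T_g(St^{(\alpha')}_{\rho^{-1}}h)$. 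Taking the supremum over $h$ and then over $\rho$ gives $\|g\|_{p(\cdot),q,\alpha}\lesssim\|T_g\|$.

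\emph{Surjectivity.} Let $L\in\mathcal{H}(p'(\cdot),q',\alpha')^{*}$. Every $f\in(L^{p'(\cdot)},\ell^{q'})$ belongs to $\mathcal{H}$ with $\widetilde{\|f\|}_{\mathcal{H}}\leq\|f\|_{p'(\cdot),q'}$, using the decomposition $(\|f\|,1,f/\|f\|)$. So $L$ restricts to a bounded functional on $(L^{p'(\cdot)},\ell^{q'})$, and the amalgam duality produces $g\in(L^{p(\cdot)},\ell^{q})$ with $L(f)=\int fg$ there. The same is true for each dilated space, because $St^{(\alpha')}_{\rho}$ maps $(L^{p'(\cdot)},\ell^{q'})$ into $\mathcal{H}$ with norm at most $1$. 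Applying $L$ to $St^{(\alpha')}_{\rho^{-1}}h$ and using the identity gives
\begin{align*}
\left|\int h\,St^{(\alpha)}_{\rho}g\right|\leq\|L\|\,\|h\|_{p'(\cdot),q'} .
\end{align*}
Dualizing, $\sup_{\rho}\|St^{(\alpha)}_{\rho}g\|_{p(\cdot),q}\lesssim\|L\|$, that is, $g\in(L^{p(\cdot)},\ell^{q})^{\alpha}$. Finally, $L$ and $T_g$ agree on every $St^{(\alpha')}_{\rho}f_n$. Since the partial sums $\sum_{n\leq N}c_nSt^{(\alpha')}_{\rho_n}f_n$ converge to $f$ in $\mathcal{H}$-norm (the tail has norm at most $\sum_{n>N}|c_n|$), continuity gives $L=T_g$.

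\emph{Main obstacle.} The delicate point is exact isometry rather than equivalence of norms. The H\"older constant $1+1/p_--1/p_+$ in (\ref{eq2.4}) and the duality $(L^{p'(\cdot)})^{*}\cong L^{p(\cdot)}$ are isometric only up to a factor at most $2$ for the Luxemburg norm. I would handle this by doing the whole argument with the associate norm $\sup\{\int|fh|:\|h\|_{p'(\cdot)}\leq1\}$ on $L^{p(\cdot)}$ in the local part, for which both steps hold with constant $1$. I would then read ``isometric'' with respect to that norm, which is equivalent to the Luxemburg one. With the Luxemburg norm literally, one only obtains an isomorphism with constants depending on $p_\pm$. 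The other technical point is the term-by-term interchange of the integral with the decomposition, which I would settle by working cube by cube as described above.
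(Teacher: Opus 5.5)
The paper does not prove this proposition; it is quoted from Yang--Zhou, so the comparison is with the standard argument, and that is the argument you give. It uses the scaling identity $\int St^{(\alpha')}_{\rho}h\,g=\int h\,St^{(\alpha)}_{\rho^{-1}}g$, amalgam H\"older for the upper bound, one-atom test functions $St^{(\alpha')}_{\rho^{-1}}h$ for the lower bound, and restriction of $L$ to $(L^{p'(\cdot)},\ell^{q'})$ plus amalgam duality for surjectivity. This is correct, apart from two details below.

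Your caveat about isometry is right, and your own computation makes it precise. It gives exactly $\|T_g\|=\sup_{\rho>0}\bigl\|\{\|(St^{(\alpha)}_{\rho}g)\chi_{I^{1}_{k}}\|^{\#}\}_{k\in\mathbb{Z}^{d}}\bigr\|_{\ell^{q}}$, where $\|G\|^{\#}=\sup\{\int|Gh|:\|h\|_{p'(\cdot)}\leq1\}$. Moreover $\|G\|_{p(\cdot)}\leq\|G\|^{\#}\leq(1+1/p_{-}-1/p_{+})\|G\|_{p(\cdot)}$. The left inequality follows by testing against $h=|G/\|G\|_{p(\cdot)}|^{p(\cdot)-1}$, whose $p'(\cdot)$-modular is $1$ because $p_{+}<\infty$; the right one is \eqref{eq2.4}. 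Hence you actually obtain $\|g\|_{p(\cdot),q,\alpha}\leq\|T_g\|\leq(1+1/p_{-}-1/p_{+})\|g\|_{p(\cdot),q,\alpha}$, and exact isometry holds only for the renormed space you describe.

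Both details come from the fact that for a non-constant exponent, dilations do not preserve $L^{p(\cdot)}_{\rm loc}$. The $p(\cdot)$-modular of $f(\rho^{-1}\cdot)$ is $\rho^{d}$ times the $p(\rho\,\cdot)$-modular of $f$. For example, take $d=1$, $p=2$ off $[1,2)$ and $p=4$ on $[1,2)$, $f=|x-3/4|^{-1/3}\chi_{[1/2,1)}$, and $\rho=2$.

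First, in the surjectivity step $St^{(\alpha)}_{\rho}g$ is only known to lie in $L^{1}_{\rm loc}$, not in $(L^{p(\cdot)},\ell^{q})$. So ``dualizing'' needs the Fatou form of the norming inequality:
\begin{enumerate}
\item Test against bounded compactly supported $h$. Their dilates are again bounded with compact support, hence lie in $(L^{p'(\cdot)},\ell^{q'})$, where $L$ is given by $g$.
\item Adjust phases to get $\int|St^{(\alpha)}_{\rho}g|\,|h|\leq\|L\|\,\|h\|_{p'(\cdot),q'}$.
\item Apply this to norming functions of bounded truncations of $St^{(\alpha)}_{\rho}g$ on finitely many cubes.
\item Let the truncation increase, using the Fatou property of $\|\cdot\|_{p(\cdot)}$.
\end{enumerate}

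Second, the claim that $L$ and $T_g$ agree on every $St^{(\alpha')}_{\rho}f_{n}$ does not follow from the representation of $L$ on $(L^{p'(\cdot)},\ell^{q'})$, because $St^{(\alpha')}_{\rho}f_{n}$ need not belong to that amalgam. Finish by density instead. Once $g\in(L^{p(\cdot)},\ell^{q})^{\alpha}$, both $L$ and $T_g$ are bounded on $\mathcal{H}(p'(\cdot),q',\alpha')$ and coincide on $\mathcal{C}^{\infty}_{\rm c}$, which is dense there by Proposition \ref{p2.12}. Alternatively, approximate each $f_n$ by $h_j\in\mathcal{C}_{\rm c}$ in $\|\cdot\|_{p'(\cdot),q'}$ and use $\|St^{(\alpha')}_{\rho}(f_{n}-h_{j})\|_{\mathcal{H}(p'(\cdot),q',\alpha')}\leq\|f_{n}-h_{j}\|_{p'(\cdot),q'}$.
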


We end this subsection by the following properties of $\mathcal{H}(p(\cdot),q,\alpha)$.

\begin{prop} \cite{Dia-2023}. \label{p2.12}
	Let $p(\cdot) \in \mathcal{P}(\mathbb{R}^d)$ and $ 1 \leq q \leq \alpha \leq p(\cdot)$.
	\begin{enumerate}
		\item	$ \left(L^{p(\cdot)},\ell^{q}\right)$  and the set $\mathcal{C}_{\rm{c}}^{\infty}:=\mathcal{C}_{\rm{c}}^{\infty}(\mathbb{R}^d)$ of all infinitely differentiable and compactly supported functions on $\mathbb{R}^{d}$  are dense subspaces of $\mathcal{H}(p(\cdot),q,\alpha)$.
		\item  $\mathcal{H}(p(\cdot),q,\alpha)$ is a solid space; that is : if $g\in L^{0}$ and  $f\in \mathcal{H}(p(\cdot),q,\alpha)$ such that $|g| \leq |f|$  a.e. then $\|g\|_{\mathcal{H}(p(\cdot),q,\alpha)}\leq \|f\|_{\mathcal{H}(p(\cdot),q,\alpha)}$ and therefore $g$ belongs to $\mathcal{H}(p(\cdot),q,\alpha)$.
	\end{enumerate}
\end{prop}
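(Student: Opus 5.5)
The statement has two parts, density and solidity, and I would prove them separately. Both rest on two elementary facts about $\mathfrak{h}$-decompositions. The first is the trivial bound $\widetilde{\|g\|}_{\mathcal{H}(p(\cdot),q,\alpha)}\leq \|g\|_{p(\cdot),q}$ for $g\in (L^{p(\cdot)},\ell^{q})$. It follows from the one-term decomposition $g=\|g\|_{p(\cdot),q}\,St^{(\alpha)}_{1}\big(g/\|g\|_{p(\cdot),q}\big)$, since $St^{(\alpha)}_{1}$ is the identity. The second is the consequence for a single dilated atom: whenever $\|f_n\|_{p(\cdot),q}\le 1$ and $h\in(L^{p(\cdot)},\ell^{q})$, we have $\widetilde{\|St^{(\alpha)}_{\rho}f_n-St^{(\alpha)}_{\rho}h\|}\le \|f_n-h\|_{p(\cdot),q}$, because $St^{(\alpha)}_\rho$ is linear and the difference is itself one dilated term. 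Throughout I read the dilation in the definition of the norm as $St^{(\alpha)}$, consistently with part (1) of the definition.

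\emph{Density.} Fix $f\in\mathcal{H}(p(\cdot),q,\alpha)$ and $\varepsilon>0$, and take an $\mathfrak{h}$-decomposition $f=\sum_{n\ge1}c_nSt^{(\alpha)}_{\rho_n}f_n$.
\begin{enumerate}
\item Choose $N$ with $\sum_{n>N}|c_n|<\varepsilon$. The tail $\sum_{n>N}c_nSt^{(\alpha)}_{\rho_n}f_n$ is itself $\mathfrak{h}$-decomposed, so the partial sum $f_N$ satisfies $\widetilde{\|f-f_N\|}<\varepsilon$.
\item Since $1\le q\le\alpha\le p(\cdot)\le p_+<\infty$, we have $q<\infty$. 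Proposition \ref{P2.1}(3) then gives $\varphi_n\in\mathcal{C}_{\rm c}$ with $\|f_n-\varphi_n\|_{p(\cdot),q}<\varepsilon/(N\max_{n\le N}|c_n|+1)$ for $n\le N$.
\item Mollify each $\varphi_n$ to get $\psi_n\in\mathcal{C}^\infty_{\rm c}$ converging to $\varphi_n$ uniformly, with supports inside a fixed compact $K$. Then $\|\varphi_n-\psi_n\|_{p(\cdot),q}\le \|\varphi_n-\psi_n\|_\infty\,\|\chi_K\|_{p(\cdot),q}$. This tends to $0$ because only finitely many cubes $I_k$ meet $K$, and each $\|\chi_{K\cap I_k}\|_{p(\cdot)}$ is finite by \eqref{eq2.6}.
\item Each $St^{(\alpha)}_{\rho_n}\psi_n$ is again in $\mathcal{C}^\infty_{\rm c}$. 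By the second fact above, $\sum_{n\le N}c_nSt^{(\alpha)}_{\rho_n}\psi_n$ is within $2\varepsilon$ of $f_N$, hence within $3\varepsilon$ of $f$.
\end{enumerate}
This proves that $\mathcal{C}^\infty_{\rm c}$ is dense. Since $\mathcal{C}^\infty_{\rm c}\subset (L^{p(\cdot)},\ell^{q})$, the first fact shows $(L^{p(\cdot)},\ell^q)\subset\mathcal{H}(p(\cdot),q,\alpha)$, and density of $(L^{p(\cdot)},\ell^q)$ follows. Notably, this route never needs the bound $St^{(\alpha)}_\rho:(L^{p(\cdot)},\ell^q)\to(L^{p(\cdot)},\ell^q)$, which is awkward for variable exponents.

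\emph{Solidity.} Let $|g|\le|f|$ a.e. Define $u=g/f$ on $\{f\neq0\}$ and $u=0$ elsewhere, so that $u$ is measurable, $|u|\le1$ and $g=uf$ a.e. Given an $\mathfrak{h}$-decomposition of $f$, set $\tilde f_n(x)=u(\rho_nx)f_n(x)$. Then $St^{(\alpha)}_{\rho_n}\tilde f_n=u\cdot St^{(\alpha)}_{\rho_n}f_n$. By the lattice property of the Luxemburg norm on each cube, $\|\tilde f_n\|_{p(\cdot),q}\le\|f_n\|_{p(\cdot),q}\le1$. Multiplying the partial sums by $u$ preserves convergence in $L^{p(\cdot)}_{\rm loc}$, because $\|u h\chi_K\|_{p(\cdot)}\le\|h\chi_K\|_{p(\cdot)}$ for every compact $K$. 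Hence $g=\sum_n c_nSt^{(\alpha)}_{\rho_n}\tilde f_n$ is an $\mathfrak{h}$-decomposition with the same coefficients. Taking the infimum over all decompositions of $f$ gives $\widetilde{\|g\|}\le\widetilde{\|f\|}$, and in particular $g\in\mathcal{H}(p(\cdot),q,\alpha)$.

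The only delicate points are bookkeeping about the mode of convergence. The first is to check that truncated and modified series still converge in $L^{p(\cdot)}_{\rm loc}$. The second is the mollification estimate in the amalgam norm, where I rely on finiteness of $\|\chi_K\|_{p(\cdot)}$ from \eqref{eq2.6} together with the fact that only finitely many cubes are involved. I expect the first, making the tail argument rigorous in the $L^{p(\cdot)}_{\rm loc}$ sense, to be the main place requiring care.
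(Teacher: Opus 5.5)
The paper does not prove this proposition; it quotes it from the cited reference. There is therefore no in-text argument to compare yours with, and judged on its own your proof is correct.

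\textbf{Density.} Truncating an $\mathfrak{h}$-decomposition and replacing the finitely many surviving atoms by $\mathcal{C}^{\infty}_{\rm c}$ functions needs only three inputs: the linearity of $St^{(\alpha)}_{\rho}$, the one-term bound $\widetilde{\|St^{(\alpha)}_{\rho}h\|}\leq \|h\|_{p(\cdot),q}$, and Proposition \ref{P2.1}(3).

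The step you flag as delicate is actually immediate. $L^{p(\cdot)}_{\rm loc}$ is a topological vector space, so if the partial sums $S_M$ tend to $f$ there, then $S_M-S_N\to f-S_N$.

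The one input you should state explicitly is that each term $St^{(\alpha)}_{\rho_n}f_n$ lies in $L^{p(\cdot)}_{\rm loc}$. Take this from the definition of an $\mathfrak{h}$-decomposition, which requires the partial sums to converge in $L^{p(\cdot)}_{\rm loc}$. Do not try to derive it from $f_n\in(L^{p(\cdot)},\ell^{q})$: for non-constant $p(\cdot)$ a dilation need not preserve $L^{p(\cdot)}_{\rm loc}$. For example, take $d=1$, $p=4$ on $(1,2)$ and $p=2$ elsewhere. Then $h(y)=|y-\tfrac12|^{-1/3}\chi_{(0,1)}(y)$ lies in $L^{p(\cdot)}$. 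But $h(\cdot/3)$ behaves like $|x-\tfrac32|^{-1/3}$ near $x=\tfrac32$, where $p=4$, so it is not locally in $L^{p(\cdot)}$. In particular, an estimate like Lemma \ref{lem3.10} cannot hold for general $p(\cdot)\in\mathcal{P}(\mathbb{R}^d)$. Your choice to avoid every dilation bound is therefore what makes the argument sound, not a mere convenience.

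\textbf{Solidity.} The rescaled multiplier $\tilde f_n=u(\rho_n\cdot)f_n$ is exactly right. It produces an $\mathfrak{h}$-decomposition of $g$ with the same coefficients, and hence gives the lattice property directly.

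Compare this with Proposition \ref{P4.1}, where the paper multiplies the atoms by $\mbox{sgn}\,f$ without rescaling. Since $St^{(\alpha)}_{\rho}(\varphi\,\mbox{sgn}\,f)=(St^{(\alpha)}_{\rho}\varphi)\,(\mbox{sgn}\,f)(\rho^{-1}\cdot)$, the correct choice there is $\varphi_n\,(\mbox{sgn}\,f)(\rho_n\cdot)$. That is precisely your construction with $u=\mbox{sgn}\,f$. Your argument thus yields both inequalities of Proposition \ref{P4.1} as special cases, taking $u=\mbox{sgn}\,f$ or $u=\overline{\mbox{sgn}\,f}$. It also supplies directly the lattice property of $\sigma_{p(\cdot),q,\alpha}$ that the paper takes from this proposition.
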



\section{The Fofana space  $\left(L^{p(\cdot)},\ell^{q}\right)^{\alpha}$} \label{Sec3}

This section is devoted to prove that the Fofana spaces  with variable exponent $\left(L^{p(\cdot)},\ell^{q}\right)^{\alpha}$  have the Fatou property and characterize their subset $\left(L^{p(\cdot)},\ell^{q}\right)^{\alpha}_{\rm a}$ of all elements of absolutely continuous norm.

\begin{theo} \label{th3.1}
Let $p(\cdot)$ be a variable exponent on $\mathbb{R}^d$ and $ p(\cdot) \leq  \alpha \leq q \leq \infty$. 
	Then  $\left(L^{p(\cdot)},\ell^{q}\right)^{\alpha}$ is a Banach function space on $\mathbb{R}^d$ satisfying the Fatou
	property.
\end{theo}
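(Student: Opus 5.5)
We define $\sigma(f)=\|f\|_{p(\cdot),q,\alpha}$ for $f\in L^{0}_{+}$, with the convention that $\sigma(f)=\infty$ whenever $f\notin L^{p(\cdot)}_{\rm loc}$. The plan is to check that $\sigma$ is a saturated function norm in the sense of Definition \ref{defi 2.1}, that the associated normed K\"{o}the space is exactly $\left(L^{p(\cdot)},\ell^{q}\right)^{\alpha}$, and that it is a Banach function space. Then we prove the Fatou property directly by monotone convergence at each of the three nested levels: the Luxemburg norm, the $\ell^q$ sum, and the supremum over $\rho$.

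The function norm axioms come from the corresponding properties of $\|\cdot\|_{p(\cdot)}$ and $\|\cdot\|_{\ell^q}$, because $St_{\rho}^{(\alpha)}$ is linear and positive. Homogeneity and the triangle inequality pass through the supremum over $\rho$. The lattice property is the solidity in Proposition \ref{p2.9}(4). Strict positivity holds because $\sigma(f)\geq \|f\|_{p(\cdot),q}$ (take $\rho=1$), and this quantity vanishes only when $f=0$ a.e. Completeness is Proposition \ref{p2.9}(2), so by Proposition \ref{p2.6}(1) the space is a Banach function space. Saturation follows from a bounded cube $F$ by a direct computation: each dilate $St_\rho^{(\alpha)}\chi_F=\rho^{-d/\alpha}\chi_{\rho F}$ meets about $\max(1,(\rho\ell)^d)$ unit cubes. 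Estimate \eqref{eq2.6} then gives a bound uniform in $\rho$, using $p(\cdot)\le\alpha\le q$. This gives the non-triviality in Proposition \ref{p2.9}(1), and any set of positive measure contains such a bounded piece.

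For the Fatou property, let $0\le f_n\uparrow f$ a.e. with $M=\sup_n\sigma(f_n)<\infty$. First we show that the Luxemburg norm is Fatou. If $0\le g_n\uparrow g$ and $\|g_n\|_{p(\cdot)}\le \lambda$, then monotone convergence applied to $\int (g_n/\lambda)^{p(x)}dx\le 1$ (handling the set $\{p=\infty\}$ through the essential supremum part of the modular) gives $\|g\|_{p(\cdot)}\le\lambda$. Hence $\|g_n\|_{p(\cdot)}\uparrow\|g\|_{p(\cdot)}$, since monotonicity gives the other inequality.

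Now fix $\rho>0$ and $k$, and apply this to $g_n=(St_\rho^{(\alpha)}f_n)\chi_{I_k}\uparrow (St_\rho^{(\alpha)}f)\chi_{I_k}$. For $q<\infty$, monotone convergence for series (or Fatou's lemma for sums) gives $\|St_\rho^{(\alpha)}f\|_{p(\cdot),q}=\lim_n\|St_\rho^{(\alpha)}f_n\|_{p(\cdot),q}\le M$. For $q=\infty$ the supremum of increasing limits is the limit of the suprema. In particular $f\chi_K\in L^{p(\cdot)}$ for compact $K$, so $f\in L^{p(\cdot)}_{\rm loc}$. Taking the supremum over $\rho$ gives $f\in\left(L^{p(\cdot)},\ell^{q}\right)^{\alpha}$ with $\sigma(f)\le \liminf\sigma(f_n)$. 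Combined with $\sigma(f_n)\le\sigma(f_{n+1})\le\sigma(f)$ from solidity, this yields $\lim\sigma(f_n)=\sigma(f)$.

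The main obstacle is the first step, the Fatou property for the variable Luxemburg norm when $p$ may take the value $\infty$ on a set of positive measure. There the modular must be written as $\int_{\{p<\infty\}}(g/\lambda)^{p}+\|g/\lambda\|_{L^\infty(\{p=\infty\})}$, and both parts must be shown to pass to the monotone limit. Alternatively one can cite the standard fact that $L^{p(\cdot)}$ has the Fatou property \cite{Cruz-Fior-2013}. The remaining passages through $\ell^q$ and $\sup_\rho$ are routine.
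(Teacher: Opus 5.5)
Your proposal is correct and follows essentially the same route as the paper: the Banach function space structure comes from Proposition \ref{p2.9}(2),(4), and the Fatou property comes from monotone convergence of $\|(St_{\rho}^{(\alpha)}f_n)\chi_{I_k^1}\|_{p(\cdot)}$ on each unit cube, followed by passage to the $\ell^q$-norm and then to the supremum over $\rho$. The differences are cosmetic: you use monotone convergence for series where the paper uses a finite index set $\mathbb{I}$ and an arbitrary $t<\|f\|_{p(\cdot),q,\alpha}$, and you also check explicitly that $f\in L^{p(\cdot)}_{\rm loc}$ (which the paper glosses over) and verify saturation, which the paper's definition of a Banach function space does not require.
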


\begin{proof}
 Points (2) and (4) of Proposition \ref{p2.9} assert that  $\left(L^{p(\cdot)},\ell^{q}\right)^{\alpha}$ is a Banach space and it possesses the lattice property. Thus $\left(L^{p(\cdot)},\ell^{q}\right)^{\alpha}$ is a Banach function space on $\mathbb{R}^d$. It remains to prove that it satisfies the Fatou property.
 
 Let $(f_{n})_{n\geq 1}$ be a sequence of nonnegative elements of  $\left(L^{p(\cdot)},\ell^{q}\right)^{\alpha}$ such that $(f_{n})_{n\geq 1}\uparrow f$ and $\displaystyle \sup_{n\geq 1}\|f_{n}\|_{p(\cdot),q,\alpha}< \infty$.
	Then, $\left(\|f_{n}\|_{p(\cdot),q,\alpha}\right)_{n\geq 1}$ is a non-decreasing sequence and satisfies
	\begin{align}\label{eq3.1}
		\displaystyle \lim_{n\rightarrow \infty} \|f_{n}\|_{p(\cdot),q,\alpha}=\displaystyle \sup_{n\geq 1} \|f_{n}\|_{p(\cdot),q,\alpha}\leq \|f\|_{p(\cdot),q,\alpha}.
	\end{align}
	Let us consider a real number $t$ satisfying $0\leq t< \|f\|_{p(\cdot),q,\alpha}$. Then, there exist a positive real number $\rho$ such that $\left \|St_{\rho}^{(\alpha)}f \right\|_{p(\cdot),q}>t$ and a finite subset  $\mathbb{I}$ of $\mathbb{Z}^{d}$ for which
	\[
	\left\{
	\begin{array}{rl}
		& \displaystyle \left[\sum_{k\in \mathbb{I}} \left( \left\|\left(St_{\rho}^{(\alpha)}f\right)\chi_{I_{k}^{1}}  \right\|_{p(\cdot)}\right)^{q}  \right]^{\frac{1}{q}}>t \quad  \mbox{if } q<\infty,\\\\
		&\displaystyle \sup_{k \in \mathbb{I}}\left\|\left(St_{\rho}^{(\alpha)}f\right)\chi_{I_{k}^{1}}  \right\|_{p(\cdot)}>t \quad \quad \quad \quad \, \mbox{ if } q =\infty.
	\end{array}
	\right.
	\]
	Let $\gamma$ be the cardinality of $\mathbb{I}$ and set
		\[
		\theta =
	\left\{
	\begin{array}{rl}
		& \displaystyle \left[\sum_{k\in \mathbb{I}} \left( \left\|\left(St_{\rho}^{(\alpha)}f\right)\chi_{I_{k}^{1}}  \right\|_{p(\cdot)}\right)^{q} - t^{q} \right]\frac{1}{\gamma} \quad \mbox{if } q<\infty,\\\\
		&\displaystyle \sup_{k \in \mathbb{I}}\left\|\left(St_{\rho}^{(\alpha)}f\right)\chi_{I_{k}^{1}}  \right\|_{p(\cdot)}-t \quad \quad \quad \quad \quad \; \mbox{if } q =\infty.
	\end{array}
	\right.
	\]
	Since $(f_{n})_{n\geq 1}\uparrow f$, by the monotone convergence theorem, we get
\begin{align*}
	\left( \left\|\left(St_{\rho}^{(\alpha)}f_{n}\right)\chi_{I_{k}^{1}} \right\|_{p(\cdot)}\right)_{n \geq 1}\uparrow  \left\|\left(St_{\rho}^{(\alpha)}f\right)\chi_{I_{k}^{1}} \right\|_{p(\cdot)},
	\;\;\;\;\;\;\;\;\;k\in \mathbb{I}.
\end{align*}
Hence, there exists an integer  $n_{\theta}>0$  such that, for any integer $n\geq n_{\theta}$ and any $k\in \mathbb{I}$,
	\[
	\left\{
	\begin{array}{rl}
		&\left(\left\|\left(St_{\rho}^{(\alpha)}f_{n}\right)\chi_{I_{k}^{1}} \right\|_{p(\cdot)}\right)^{q}>\left(\left\|\left(St_{\rho}^{(\alpha)}f\right)\chi_{I_{k}^{1}} \right\|_{p(\cdot)}\right)^{q}-\theta \quad \; \mbox{if } q<\infty,\\\\
		&\left\|\left(St_{\rho}^{(\alpha)}f_{n}\right)\chi_{I_{k}^{1}} \right\|_{p(\cdot)} > \left\|\left(St_{\rho}^{(\alpha)}f \right)\chi_{I_{k}^{1}} \right\|_{p(\cdot)}-\theta \quad \quad \quad \quad \quad \mbox{if } q=\infty.
	\end{array}
	\right.
	\]
	Therefore, for any integer $n\geq n_{\theta}$, we obtain
		\[
	\left\{
	\begin{array}{rl}
		&\displaystyle \sum_{k\in \mathbb{I}} \left(\left\|\left(St_{\rho}^{(\alpha)}f_{n}\right)\chi_{I_{k}^{1}} \right\|_{p(\cdot)}\right)^{q}> \displaystyle \sum_{k\in \mathbb{I}}  \left(\left\|\left(St_{\rho}^{(\alpha)}f\right)\chi_{I_{k}^{1}} \right\|_{p(\cdot)}\right)^{q}-\gamma \,\theta=t^{q} \quad \, \mbox{if } q<\infty,\\\\
		&\displaystyle \sup_{k\in \mathbb{I}} \left\|\left(St_{\rho}^{(\alpha)}f_{n}\right)\chi_{I_{k}^{1}} \right\|_{p(\cdot)} >\displaystyle \sup_{k\in \mathbb{I}}  \left\|\left(St_{\rho}^{(\alpha)}f \right)\chi_{I_{k}^{1}} \right\|_{p(\cdot)}-\theta =t \quad \quad \quad \quad \quad \quad \mbox{if } q=\infty.
	\end{array}
	\right.
	\]
	Thus 
	\begin{align*}
		\left \|St_{\rho}^{(\alpha)}f_{n} \right\|_{p(\cdot),q} > t,\;\;\;\;\;\;\;\;\;\;n\geq  n_{\theta}.
	\end{align*}
	This shows that 
	\begin{align*}
		\|f_{n}\|_{p(\cdot),q,\alpha}=\sup_{\rho >0}\; \left \|St_{\rho}^{(\alpha)}f_{n} \right\|_{p(\cdot),q}>t, \;\;\;\;\;n\geq  n_{\theta}
	\end{align*}
	and therefore 
	\begin{align*}
		\displaystyle \lim_{n\rightarrow \infty} \|f_{n}\|_{p(\cdot),q,\alpha}> t.
	\end{align*}
	Since this holds for all $0\leq t< \|f\|_{p(\cdot),q,\alpha}$, we get
	\begin{align}\label{eq3.2}
		\displaystyle \lim_{n\rightarrow \infty} \|f_{n}\|_{p(\cdot),q,\alpha} \geq \|f\|_{p(\cdot),q,\alpha}.
	\end{align}
	From \eqref{eq3.1} and \eqref{eq3.2} we deduce that  
	\begin{align*}
		\|f\|_{p(\cdot),q,\alpha}= \displaystyle \lim_{n\rightarrow \infty} \|f_{n}\|_{p(\cdot),q,\alpha}=\displaystyle \sup_{n\rightarrow \infty} \|f_{n}\|_{p(\cdot),q,\alpha}<\infty.
	\end{align*}
	This shows that the space $\left(L^{p(\cdot)},\ell^{q}\right)^{\alpha}$ satisfies the Fatou property.
\end{proof}

The following result remains valid, as in the classical case.

\begin{prop} \label{p3.2}
	Let $p(\cdot) \in \mathcal{P}(\mathbb{R}^d)$ and $ p(\cdot) \leq  \alpha \leq q \leq \infty$. Then $L^{\alpha}$ is continuously embedded in $\left(L^{p(\cdot)},\ell^{q}\right)^{\alpha}$.
\end{prop}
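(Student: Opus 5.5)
We will show there is a constant $C>0$, depending only on $p(\cdot)$, such that
\begin{align*}
\|f\|_{p(\cdot),q,\alpha}\leq C\,\|f\|_{\alpha},\qquad f\in L^{\alpha}.
\end{align*}
The approach is to pass from the variable exponent to a constant one on each unit cube, reduce to the classical estimate for $(L^{p_{+}},\ell^{q})^{\alpha}$, and then use the invariance of the $L^{\alpha}$ norm under $St_{\rho}^{(\alpha)}$.

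\textbf{Step 1 (pass to the constant exponent $p_{+}$ on cubes).} Fix $f\in L^{\alpha}$ and $\rho>0$, and put $g=St_{\rho}^{(\alpha)}f$. For each $k\in\mathbb{Z}^d$ apply \eqref{eq2.5} of Proposition \ref{p2.1} with $\Omega=I_{k}^{1}$. The constant $C_{2}$ there can be taken independent of $k$, since it depends only on $|\Omega|=1$ and on $p(\cdot)$. This gives
\begin{align*}
\|g\chi_{I_{k}^{1}}\|_{p(\cdot)}\leq C_{2}\,\|g\chi_{I_{k}^{1}}\|_{p_{+}}.
\end{align*}
Here $p_{+}\leq\alpha$ because $p(\cdot)\leq\alpha$. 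Taking the $\ell^{q}$ norm in $k$ yields $\|g\|_{p(\cdot),q}\leq C_{2}\,\|g\|_{p_{+},q}$.

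Alternatively, one could use \eqref{eq2.5'} with $p(\cdot)\leq\alpha$ (constant), which gives $\|g\chi_{I_k^1}\|_{p(\cdot)}\leq 2\|g\chi_{I_k^1}\|_{\alpha}$ directly. This requires $\alpha\in\mathcal{P}$, which fails when $\alpha=\infty$, so that case would be handled via $p_{+}$.

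\textbf{Step 2 (constant exponent chain).} Since $p_{+}\leq\alpha$ and $|I_{k}^{1}|=1$, Hölder's inequality gives $\|g\chi_{I_{k}^{1}}\|_{p_{+}}\leq\|g\chi_{I_{k}^{1}}\|_{\alpha}$. Since $q\geq\alpha$, the embedding $\ell^{\alpha}\subset\ell^{q}$ holds with norm $1$. Combining these,
\begin{align*}
\|g\|_{p_{+},q}\leq\Bigl(\sum_{k}\|g\chi_{I_{k}^{1}}\|_{\alpha}^{\alpha}\Bigr)^{1/\alpha}=\|g\|_{\alpha},
\end{align*}
with the obvious modification when $\alpha=\infty$.

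\textbf{Step 3 (dilation invariance).} A change of variables gives $\|St_{\rho}^{(\alpha)}f\|_{\alpha}=\|f\|_{\alpha}$ for every $\rho>0$. Combining the three steps, $\|St_{\rho}^{(\alpha)}f\|_{p(\cdot),q}\leq C_{2}\|f\|_{\alpha}$ for every $\rho>0$. Taking the supremum over $\rho>0$ gives the claimed bound with $C=C_{2}$. Membership $f\in L^{p(\cdot)}_{\rm loc}$ follows from the same local estimate on cubes.

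\textbf{Main obstacle.} The only delicate point is Step 1: the constant in \eqref{eq2.5} must be uniform over all cubes $I_{k}^{1}$. Proposition \ref{p2.1} as stated attaches the constants to a fixed $\Omega$, so I would verify the uniformity directly from the modular. Suppose $\|h\chi_{\Omega}\|_{p_{+}}\leq 1$ with $|\Omega|=1$. Split $\Omega$ into the set where $|h|\leq 1$ and the set where $|h|>1$. On the first set $|h|^{p(x)}\leq 1$; on the second, $|h|^{p(x)}\leq|h|^{p_{+}}$ since $p(x)\leq p_{+}$. Hence
\begin{align*}
\int_{\Omega}|h|^{p(x)}\,dx\leq|\Omega|+\int_{\Omega}|h|^{p_{+}}\,dx\leq 2.
\end{align*}
Therefore $\|h\chi_{\Omega}\|_{p(\cdot)}\leq 2$, because $p_{-}\geq1$ gives $\int_{\Omega}(|h|/2)^{p(x)}\,dx\leq\frac12\int_{\Omega}|h|^{p(x)}\,dx\leq 1$. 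By homogeneity this yields $C_{2}=2$, independent of $k$.
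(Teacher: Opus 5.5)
Your proof is correct, and it is more self-contained than the paper's. The paper's proof is a one-line appeal to the embedding \eqref{eq2.8} of Proposition \ref{p2.9} with $p_{1}(\cdot)=\alpha$, which gives $\|f\|_{p(\cdot),q,\alpha}\leq 2\|f\|_{\alpha,q,\alpha}$, followed by the identification $\|f\|_{\alpha,q,\alpha}=\|f\|_{\alpha}$. You instead work directly with each dilate $St_{\rho}^{(\alpha)}f$. You prove a per-cube comparison $\|h\chi_{I_{k}^{1}}\|_{p(\cdot)}\leq 2\|h\chi_{I_{k}^{1}}\|_{p_{+}}$ from the modular, and you check that it is uniform in $k$. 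You then combine H\"{o}lder's inequality on unit cubes, the inclusion $\ell^{\alpha}\subset\ell^{q}$, and dilation invariance of $\|\cdot\|_{\alpha}$. Both arguments end with the constant $2$.

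The paper's route is shorter because it delegates the exponent comparison to the cited inequality. Yours gains two things. First, \eqref{eq2.8} is stated for $p_{1}(\cdot)\in\mathcal{P}(\mathbb{R}^d)$, which excludes the admissible case $\alpha=q=\infty$; passing through $p_{+}$ covers that case. Second, your Step 2 proves exactly $\|f\|_{\alpha,q,\alpha}\leq\|f\|_{\alpha}$, which is all that is needed. The paper writes this as an equality, but for $q>\alpha$ it can be strict. For example, take $d=1$, $f=\chi_{[-1,1)}$ and $q=\infty$: each $\rho^{-1}I_{k}^{1}$ lies on one side of the origin, so $\|f\|_{\alpha,\infty,\alpha}=1<2^{1/\alpha}=\|f\|_{\alpha}$.
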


\begin{proof}
	Let $f \in L^{p(\cdot)}_{\rm{loc}}$. By taking $p_{1}(\cdot)=\alpha$ in \eqref{eq2.8}, we obtain
	\begin{align*} 
		\|f\|_{p(\cdot),q,\alpha} \leq   2 \, \|f\|_{\alpha,q,\alpha}= 2 \, \|f\|_{\alpha}
	\end{align*}
	and therefore the desired result holds.
\end{proof}

The result below follows from Theorem \ref{th3.1}, Proposition \ref{p3.2} and \cite[Proposition 3.5]{Dia-Fof-2024}.
\begin{theo} \label{th3.3}
	Let $p(\cdot) \in \mathcal{P}(\mathbb{R}^d)$ and $ p(\cdot) \leq  \alpha \leq q \leq \infty$ such that $\alpha<\infty$.
	 Then the set $\left(L^{p(\cdot)},\ell^{q}\right)^{\alpha}_{\rm a}$ of all elements of $\left(L^{p(\cdot)},\ell^{q}\right)^{\alpha}$ of absolutely continuous norm is the closure of $\mathcal{C}_{\rm{c}}^{\infty}$ (and therefore of $L^{\alpha}$) in $\left(L^{p(\cdot)},\ell^{q}\right)^{\alpha}$.
\end{theo}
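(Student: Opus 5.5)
The proof rests on a general principle. In a Banach function space $X$ with the Fatou property and a saturated function norm, the set $X_{\rm a}$ is a closed ideal. If, in addition, a dense class of nice functions (here $\mathcal{C}_{\rm c}^{\infty}$ or $L^{\alpha}$) is contained in $X_{\rm a}$, and every $f\in X_{\rm a}$ can be approximated by truncations $f\chi_{\{|x|\le N,\ |f|\le N\}}$, then $X_{\rm a}$ is the closure of that class. I expect \cite[Proposition 3.5]{Dia-Fof-2024} to be exactly such an abstract statement. It is presumably proved for constant-exponent Fofana spaces, but its proof should only use the Fatou property, the solidity of the space, and the continuous embedding of $L^{\alpha}$ into the space. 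The plan is therefore to verify its hypotheses for $X=\left(L^{p(\cdot)},\ell^{q}\right)^{\alpha}$ and, as a safeguard, to sketch the two inclusions directly.

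First, the hypotheses. Theorem \ref{th3.1} gives that $X$ is a Banach function space with the Fatou property. Proposition \ref{p2.9}(4) gives solidity. Saturation follows from Proposition \ref{p3.2}: any set $E$ with $|E|>0$ contains a bounded subset $F$ with $0<|F|<\infty$, and then $\|\chi_F\|_{p(\cdot),q,\alpha}\le 2|F|^{1/\alpha}<\infty$.

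Second, the inclusion $\overline{\mathcal{C}_{\rm c}^{\infty}}\subseteq X_{\rm a}$. Since $\alpha<\infty$, every $g\in L^{\alpha}$ has absolutely continuous norm in $L^{\alpha}$ by dominated convergence. By Proposition \ref{p3.2}, $\|g\chi_{E_n}\|_{X}\le 2\|g\chi_{E_n}\|_{\alpha}\to 0$ whenever $E_n\downarrow$ with $|\bigcap E_n|=0$, so $L^{\alpha}\subseteq X_{\rm a}$ by Proposition \ref{p2.6}(4)(b). Next, $X_{\rm a}$ is closed in $X$. Given $f_j\to f$ with $f_j\in X_{\rm a}$, we have $\|f\chi_{E_n}\|_X\le \|f-f_j\|_X+\|f_j\chi_{E_n}\|_X$, which becomes small by solidity. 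Since $\mathcal{C}_{\rm c}^{\infty}$ is dense in $L^{\alpha}$ and $L^{\alpha}\hookrightarrow X$ continuously, the closures of $\mathcal{C}_{\rm c}^{\infty}$ and of $L^{\alpha}$ in $X$ coincide and are contained in $X_{\rm a}$.

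Third, the reverse inclusion $X_{\rm a}\subseteq\overline{L^{\alpha}}$, which is the main step. Take $f\in X_{\rm a}$ and set $E_N=\{x:|x|>N\}\cup\{x:|f(x)|>N\}$. Then $E_N\downarrow$ and $|\bigcap_N E_N|=0$, because $f$ is finite a.e. By Proposition \ref{p2.6}(4)(b), $\|f-f\chi_{E_N^c}\|_X=\|f\chi_{E_N}\|_X\to 0$. Finally, $f\chi_{E_N^c}$ is bounded and compactly supported, hence in $L^{\alpha}$. The only points needing care are the measurability and nesting of the $E_N$ and the finiteness of $f$ a.e. Combining the last two steps gives $X_{\rm a}=\overline{\mathcal{C}_{\rm c}^{\infty}}=\overline{L^{\alpha}}$.
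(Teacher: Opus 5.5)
Your proposal is correct and follows the paper's route. The paper only observes that Theorem \ref{th3.1} and Proposition \ref{p3.2} make $\left(L^{p(\cdot)},\ell^{q}\right)^{\alpha}$ a normed K\"{o}the space into which $L^{\alpha}$ ($\alpha<\infty$) embeds continuously, and then cites \cite[Proposition 3.5]{Dia-Fof-2024}. Your direct sketch of the two inclusions (saturation via Proposition \ref{p3.2}, $L^{\alpha}\subseteq X_{\rm a}$ by dominated convergence, closedness of $X_{\rm a}$, and the truncations $f\chi_{E_N^{c}}$) is a valid self-contained proof of that cited fact. It uses only the lattice property, saturation and the embedding, never the Fatou property you listed among the hypotheses.
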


\begin{proof}
	From Theorem \ref{th3.1} and Proposition \ref{p3.2}, $\left(L^{p(\cdot)},\ell^{q}\right)^{\alpha}$ is a normed  K\"{o}the space in which  the Lebesgue space $L^{\alpha}$, with $1\leq \alpha<\infty$, is continuously embedded. Thus an application of \cite[Proposition 3.5]{Dia-Fof-2024} yields the desired result.
\end{proof}

\begin{rem}\label{R3.4}
	In the constant exponent case, it is shown in \cite[Remark 4.5]{Dia-Fof-2021}  that $\left(L^{p},\ell^{q}\right)^{\alpha}_{\rm a}$ differ from $\left(L^{p},\ell^{q}\right)^{\alpha}$. This fact remains true in variable exponent case; that is $\left(L^{p(\cdot)},\ell^{q}\right)^{\alpha}_{\rm a}$  is a proper subspace of $\left(L^{p(\cdot)},\ell^{q}\right)^{\alpha}$.
\end{rem}


\section{The normed K\"{o}the structure of $\mathcal{H}(p(\cdot),q,\alpha)$} \label{Sec4}

In this section, unless otherwise specified, we assume that $p(\cdot)$ is a variable exponent on $\mathbb{R}^d$ and $ 1 \leq q \leq \alpha \leq p(\cdot)\leq \infty$.

For any element $f$ of $L^{p(\cdot)}_{\rm{loc}}$, we denote by $\mathfrak{h}$-$\mathcal{D}(f,p(\cdot),q,\alpha)$  the set of all $\mathfrak{h}$-decompositions of $f$ and we set
\begin{align*}
\sigma_{p(\cdot),q,\alpha}(f)=
\left\{
\begin{array}{rl}
	&\inf \left\{\displaystyle \sum_{n\geq 1}|c_{n}| : \left\{\left(c_{n},\rho_{n},f_{n}\right)\right\}_{n\geq 1} \in \mathfrak{h}\mbox{-}\mathcal{D}(f,p(\cdot),q,\alpha)\right\} \\
	&\mbox{ if $\mathfrak{h}$-$\mathcal{D}(f,p(\cdot),q,\alpha)$}\neq \varnothing,\\\\
	&\infty \;\;\;\mbox{ if }\;\;\;\mbox{$\mathfrak{h}$-$\mathcal{D}(f,p(\cdot),q,\alpha)$}=\varnothing.
\end{array}
\right.	
\end{align*}
It is worth emphasizing that, $\sigma_{p(\cdot),q,\alpha}$ is a function norm on $\mathbb{R}^d$. Indeed,
the strict positivity, homogeneity and triangle inequality follow readily from its definition. For the lattice property one can see Point (2) of Proposition \ref{p2.12}.


\subsection{Preliminary properties}

This subsection deals with some preliminary properties of the spaces $\mathcal{H}(p(\cdot),q,\alpha)$.
\begin{prop} \label{P4.1}
	$\mathcal{H}(p(\cdot),q,\alpha)=\left\{f \in L^{p(\cdot)}_{\rm{loc}} : \sigma_{p(\cdot),q,\alpha}(|f|) <\infty \right\}$ and, for any $f \in L^{p(\cdot)}_{\rm{loc}}$, we have
\begin{align*}
\sigma_{p(\cdot),q,\alpha}(f)=	\sigma_{p(\cdot),q,\alpha}(|f|).
\end{align*}
\end{prop}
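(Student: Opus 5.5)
The key claim is the identity $\sigma_{p(\cdot),q,\alpha}(f)=\sigma_{p(\cdot),q,\alpha}(|f|)$ for every $f\in L^{p(\cdot)}_{\rm loc}$. The set equality then follows at once: $\mathcal{H}(p(\cdot),q,\alpha)$ is by definition the set of $f\in L^{p(\cdot)}_{\rm loc}$ with a nonempty set of $\mathfrak{h}$-decompositions, i.e. with $\sigma_{p(\cdot),q,\alpha}(f)<\infty$. I will prove the two inequalities separately. Both rest on one device: multiplying a decomposition termwise by a unimodular measurable function.

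Let $u$ be measurable with $|u|=1$, and let $\{(c_n,\rho_n,f_n)\}_{n\ge1}\in\mathfrak{h}$-$\mathcal{D}(f,p(\cdot),q,\alpha)$. I claim that $\{(c_n,\rho_n,g_n)\}_{n\ge 1}$, with $g_n(x)=u(\rho_n x)f_n(x)$, is an $\mathfrak{h}$-decomposition of $uf$. Three checks are needed.
\begin{enumerate}
\item Since $|g_n|=|f_n|$ pointwise and $\|\cdot\|_{p(\cdot)}$ depends only on $|\cdot|$, we get $\|g_n\|_{p(\cdot),q}=\|f_n\|_{p(\cdot),q}\le1$, and $g_n\in(L^{p(\cdot)},\ell^q)$.
\item A direct computation from \eqref{eq.2.4} gives
\begin{align*}
St^{(\alpha)}_{\rho_n}g_n(x)=\rho_n^{-d/\alpha}u(x)f_n(\rho_n^{-1}x)=u(x)\,St^{(\alpha)}_{\rho_n}f_n(x).
\end{align*}
\item Convergence in $L^{p(\cdot)}_{\rm loc}$ means convergence of $\|(\cdot)\chi_K\|_{p(\cdot)}$ for each compact $K$. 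Multiplication by $u$ preserves $|h\chi_K|$, so $\sum_n c_nSt^{(\alpha)}_{\rho_n}f_n\to f$ in $L^{p(\cdot)}_{\rm loc}$ implies $\sum_n c_n u\,St^{(\alpha)}_{\rho_n}f_n\to uf$.
\end{enumerate}
Together these show that $\mathfrak{h}$-$\mathcal{D}(uf)$ contains a decomposition with the same coefficients. Hence $\sigma_{p(\cdot),q,\alpha}(uf)\le\sigma_{p(\cdot),q,\alpha}(f)$, and this holds trivially when the right side is $\infty$.

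Now write $f=\operatorname{sgn}(f)|f|$, where $\operatorname{sgn}(f)=f/|f|$ on $\{f\neq0\}$ and $1$ elsewhere; this is measurable and unimodular. Taking $u=\overline{\operatorname{sgn}(f)}$ gives $\sigma(|f|)\le\sigma(f)$. Taking $u=\operatorname{sgn}(f)$ applied to $|f|$ gives $\sigma(f)\le\sigma(|f|)$. This proves the identity, including the case where either side is infinite.

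The only step needing care is the second check, where $u$ must be composed with the dilation $x\mapsto\rho_n x$ inside $g_n$. Measurability of $u(\rho_n\cdot)$ is clear, since it is a linear change of variables. Alternatively, one could bypass this entirely by citing the solidity in Proposition \ref{p2.12}(2), since $|f|$ and $f$ dominate each other in modulus. However, that proposition is phrased for elements of $\mathcal{H}$, and the direct argument also covers the case where $\sigma=\infty$.
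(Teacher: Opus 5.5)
Your proof is correct. For $\sigma_{p(\cdot),q,\alpha}(f)\le\sigma_{p(\cdot),q,\alpha}(|f|)$ you use the same sign-multiplication idea as the paper, and for the reverse inequality you take a genuinely different route.

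On the first inequality, your version is the correct form of the paper's step. The paper sets $\phi_n=\varphi_n\,\operatorname{sgn}f$ without adjusting for the dilation. Since $St^{(\alpha)}_{\rho_n}\phi_n(x)=\operatorname{sgn}f(\rho_n^{-1}x)\,St^{(\alpha)}_{\rho_n}\varphi_n(x)$, the resulting series is not $f$ in general. Your choice $u(\rho_n\cdot)f_n$ is exactly the correction needed.

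On the reverse inequality, the paper dominates $|f|$ by $g=\sum_n|c_n|St^{(\alpha)}_{\rho_n}|f_n|$. It then invokes the lattice property, i.e.\ the solidity result in Point (2) of Proposition~\ref{p2.12}, imported from the literature. You instead reuse the unimodular device with $u=\overline{\operatorname{sgn}f}$.

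The paper's route has one merit: it exhibits the general monotonicity of $\sigma_{p(\cdot),q,\alpha}$, which is needed anyway for it to be a function norm. However, it tacitly requires $\{(|c_n|,\rho_n,|f_n|)\}_{n\ge1}$ to be an $\mathfrak{h}$-decomposition of $g$. That means the absolute series must converge in $L^{p(\cdot)}_{\rm loc}$, and this is not automatic when $p(\cdot)>\alpha$. Already for a constant exponent $p>\alpha$ and $\rho\le1$, one has $\|(St^{(\alpha)}_{\rho}\chi_{I^{1}_{0}})\chi_{I^{1}_{0}}\|_{p}=\rho^{d(1/p-1/\alpha)}\to\infty$ as $\rho\to0$. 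So a decomposition may converge in $L^{p}_{\rm loc}$ only conditionally.

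Your argument avoids all of this. It is self-contained and treats both inequalities symmetrically, including the case $\sigma_{p(\cdot),q,\alpha}=\infty$. It preserves $L^{p(\cdot)}_{\rm loc}$-convergence exactly, so it needs neither solidity nor any control of the absolute series.
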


\begin{proof}
		Let $f$ be an element of $L^{p(\cdot)}_{\rm{loc}}$.\\
	$\bullet$ Suppose that $\sigma_{p(\cdot),q,\alpha}(|f|) <\infty$ and 
	$\left\{ \left(a_{n},\rho_{n},\varphi_{n}\right)\right\}_{n\geq 1}$ is any element of $\mathfrak{h}$-$\mathcal{D}(|f|,p(\cdot),q,\alpha)$.
	We set, for any integer $n\geq 1,$ $\phi_{n}=\varphi_{n}\,\mbox{sgn} f$, where, for all $x\in \mathbb{R}^{d}$,
	\[
\mbox{sgn}\,f(x)=
	\left\{
	\begin{array}{rl}
		&\frac{f(x)}{|f(x)|} \quad \; \mbox{if }f(x)\neq 0,\\\\
		&0 \quad \quad \quad \mbox{otherwise}.
	\end{array} 
	\right.
	\]	 
Clearly, $\left\{ \left(a_{n},\rho_{n},\phi_{n}\right)\right\}_{n\geq 1}$ belongs to  $\mathfrak{h}$-$\mathcal{D}(f,p(\cdot),q,\alpha)$.  
This shows that $f\in \mathcal{H}(p(\cdot),q,\alpha)$ and therefore
\begin{align*}
		\sigma_{p(\cdot),q,\alpha}(f) \leq \sigma_{p(\cdot),q,\alpha}(|f|).
\end{align*}
$\bullet$ Suppose that $f\in \mathcal{H}(p(\cdot),q,\alpha)$ and $\left\{\left(c_{n},\rho_{n},f_{n}\right)\right\}_{n\geq 1}$ is any element of $\mathfrak{h}$-$\mathcal{D}(f,p(\cdot),q,\alpha)$.
	It is clear that $|f| \leq \displaystyle\sum_{n\geq 1}|c_{n}|St_{\rho_{n}}^{(\alpha)}|f_{n}|=:g$. Then, by the lattice property, we get
	\begin{align*}
		\sigma_{p(\cdot),q,\alpha}(|f|)\leq \sigma_{p(\cdot),q,\alpha}(g)\leq \sum_{n\geq 1}|c_{n}|<\infty.
	\end{align*}
	This shows that
	\begin{align*}
		\sigma_{p(\cdot),q,\alpha}(|f|) \leq \sigma_{p(\cdot),q,\alpha}(f).
	\end{align*}
	The proof is complete.
\end{proof}

As an immediate consequence of Proposition \ref{P4.1}, we have what follows.
\begin{coro}
	$\mathcal{H}(p(\cdot),q,\alpha)$ is a  normed K\"{o}the space on $\mathbb{R}^d$ defined by $\sigma_{p(\cdot),q,\alpha}$.
\end{coro}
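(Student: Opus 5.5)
The plan is to check the definition of a normed K\"{o}the space directly, i.e. to show that $\sigma_{p(\cdot),q,\alpha}$ is a function norm on $\mathbb{R}^d$ and that the set $X^{\sigma}=\{f\in L^{0}:\sigma_{p(\cdot),q,\alpha}(|f|)<\infty\}$ is exactly $\mathcal{H}(p(\cdot),q,\alpha)$, with $\|f\|_{X^{\sigma}}=\sigma_{p(\cdot),q,\alpha}(|f|)=\widetilde{\|f\|}_{\mathcal{H}(p(\cdot),q,\alpha)}$.

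\textbf{Step 1: the axioms of Definition \ref{defi 2.1}.}
\begin{itemize}
\item \emph{Homogeneity.} Multiplying every coefficient $c_n$ of an $\mathfrak{h}$-decomposition by $\lambda$ gives an $\mathfrak{h}$-decomposition of $\lambda f$, so $\sigma(\lambda f)=\lambda\sigma(f)$.
\item \emph{Triangle inequality.} Interleave two $\mathfrak{h}$-decompositions of $f$ and $g$ into a single sequence. This is an $\mathfrak{h}$-decomposition of $f+g$, since the convergence in $L^{p(\cdot)}_{\rm loc}$ of each series survives the rearrangement into one sequence. Taking infima gives $\sigma(f+g)\leq\sigma(f)+\sigma(g)$.
\item \emph{Lattice property.} This is Point (2) of Proposition \ref{p2.12}.
\item \emph{Strict positivity.} If $\sigma(f)=0$, test $f$ against $g\in\mathcal{C}_{\rm c}^{\infty}$. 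For each term of a decomposition, H\"older's inequality (Proposition \ref{P2.1}(2)) together with the dilation invariance of the Fofana norm gives
\[
\Big|\int c_nSt^{(\alpha)}_{\rho_n}f_n\,g\Big|\le C|c_n|\,\|g\|_{p'(\cdot),q',\alpha'}.
\]
Summing over $n$ yields $|\int fg|\le C\sigma(f)\|g\|=0$. Since $\mathcal{C}_{\rm c}^{\infty}$ lies in the Fofana space of Proposition \ref{p3.2}, this holds for all such $g$, and therefore $f=0$ a.e.
\end{itemize}

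\textbf{Step 2: identification of the space.} The paper's convention defines $\sigma$ on $L^{p(\cdot)}_{\rm loc}$, with value $\infty$ when no $\mathfrak{h}$-decomposition exists. Extend it to all of $L^0_+$ by setting $\sigma(f)=\infty$ for $f\notin L^{p(\cdot)}_{\rm loc}$; this keeps all the axioms of Step 1. Proposition \ref{P4.1} then gives $X^{\sigma}=\mathcal{H}(p(\cdot),q,\alpha)$ and $\sigma(|f|)=\sigma(f)=\widetilde{\|f\|}_{\mathcal{H}}$. This concludes the proof.

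The main obstacle is strict positivity, which needs the duality pairing to be controlled uniformly under the dilations $St_{\rho}$. The key identity is $\int (St^{(\alpha)}_\rho f)g=\int f\,St^{(\alpha')}_{1/\rho}g$, obtained by a change of variables using $\frac1\alpha+\frac1{\alpha'}=1$. After this change, the bound follows from the definition of $\|g\|_{p'(\cdot),q',\alpha'}$ as a supremum over dilations.
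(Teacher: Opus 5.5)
Your proposal is correct and has the same structure as the paper's proof. Both show that $\sigma_{p(\cdot),q,\alpha}$ is a function norm, with the lattice property taken from Point (2) of Proposition \ref{p2.12}, and both then identify $X^{\sigma}$ with $\mathcal{H}(p(\cdot),q,\alpha)$ via Proposition \ref{P4.1}.

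\textbf{Strict positivity.} This is the one place where you differ from the paper, and your version is an improvement. The paper only asserts that strict positivity ``follows readily from its definition''. That is not really so: decompositions with arbitrarily small $\sum_n|c_n|$ do not force $f=0$ unless $\sigma_{p(\cdot),q,\alpha}$ is tied to some known norm. Your pairing argument supplies such a bound. Using $\int (St^{(\alpha)}_{\rho}f_n)g=\int f_n\,St^{(\alpha')}_{1/\rho}g$ and the amalgam H\"older inequality, you obtain $|\int fg|\le C\,\sigma_{p(\cdot),q,\alpha}(f)\,\|g\|_{p'(\cdot),q',\alpha'}$. Passing from partial sums to $\int fg$ is justified by $L^{p(\cdot)}_{\rm loc}$-convergence and the compact support of $g$.

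\textbf{The alternative in the paper.} The route the paper implicitly relies on is the embedding $\|f\|_{\alpha}\lesssim\sigma_{p(\cdot),q,\alpha}(f)$ of Proposition \ref{p3.8}. That route needs neither test functions nor the dual Fofana space, but it is only established later in the paper.

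\textbf{A small caveat.} Proposition \ref{p3.2} is stated for $p(\cdot)\in\mathcal{P}(\mathbb{R}^d)$. The corollary, however, sits under the general standing hypotheses of Section \ref{Sec4}, so that citation does not quite cover the case you need. The inclusion $\mathcal{C}_{\rm c}^{\infty}\subset(L^{p'(\cdot)},\ell^{q'})^{\alpha'}$ is still easy to check directly:
\begin{itemize}
\item When $\alpha'<\infty$ and $|E|\le 1$, the bound $p'(\cdot)\le\alpha'$ gives $\|\chi_E\|_{p'(\cdot)}\le|E|^{1/\alpha'}$. This controls small dilations.
\item The inequality $\alpha'\le q'$ controls large dilations.
\item The case $\alpha'=\infty$ is trivial.
\end{itemize}
The paper's own appeal to Proposition \ref{p2.12} for the lattice property carries the same $\mathcal{P}(\mathbb{R}^d)$ restriction.
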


From what precedes, one can define a norm, denoted by $\| \cdot \|_{p(\cdot),q,\alpha}$, on $\mathcal{H}(p(\cdot),q,\alpha)$ as follows.

\begin{defi}
	For all  $f\in L^{p(\cdot)}_{\rm{loc}}$, we define
	\begin{align*}
		\|f\|_{\mathcal{H}(p(\cdot),q,\alpha)}= 	\sigma_{p(\cdot),q,\alpha}(|f|).
	\end{align*}
\end{defi}

In the sequel	$\mathcal{H}(p(\cdot),q,\alpha)$ will be always considered endowed with this norm.

We shall use the following theorem to prove our next result.

\begin{theo} \cite[Theorem 2.1]{Fei-Zim}. \label{th4.4}
	Let $\mathbf{B}$ be a Banach space and $\Phi = (\varphi_{j})_{j\in J}$ a (not necessarily countable) bounded family in $\mathbf{B}$. Define
\begin{align*}
	\mathcal{B} := \mathcal{B}_{\Phi} = \left\{f = \sum_{j\in J}a_{j}\varphi_{j}:\sum_{j\in J}|a_{j}|< \infty \right\},
\end{align*}
	and let 
	\begin{align*}
	\|f\|_{\mathcal{B}} = \inf \left\{\sum_{j\in J}|a_j| : f = \sum_{j\in J}a_j\varphi_j\right\}.
   \end{align*}
	Then $(\mathcal{B},\|\cdot\|_{\mathcal{B}})$ is a Banach space continuously embedded into \(\mathbf{B}\).
\end{theo}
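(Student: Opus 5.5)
The statement is Theorem \ref{th4.4}, and I would prove it directly. The plan is to show that $\|\cdot\|_{\mathcal{B}}$ is a norm dominating a multiple of the norm of $\mathbf{B}$, and then to prove completeness via the criterion that every absolutely summable series converges.

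\textbf{Well-definedness and embedding.} Put $M:=\sup_{j\in J}\|\varphi_j\|_{\mathbf{B}}<\infty$. If $\sum_{j\in J}|a_j|<\infty$, then only countably many $a_j$ are nonzero. Hence $\sum_j a_j\varphi_j$ is a countable series with $\sum_j\|a_j\varphi_j\|_{\mathbf{B}}\le M\sum_j|a_j|<\infty$. Since $\mathbf{B}$ is complete, the series converges unconditionally in $\mathbf{B}$, so $f$ is a well-defined element of $\mathbf{B}$. The triangle inequality for this sum gives $\|f\|_{\mathbf{B}}\le M\sum_j|a_j|$, and taking the infimum over representations yields
\begin{align*}
\|f\|_{\mathbf{B}}\le M\,\|f\|_{\mathcal{B}},\qquad f\in\mathcal{B}.
\end{align*}
This inequality gives the continuous embedding. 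It also gives definiteness: if $\|f\|_{\mathcal{B}}=0$ then $f=0$. Homogeneity follows by scaling the coefficients. Subadditivity follows by adding two representations coefficientwise, $a_j+b_j$, and using $|a_j+b_j|\le|a_j|+|b_j|$. Thus $\mathcal{B}$ is a normed linear space.

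\textbf{Completeness.} Take $(f_n)_{n\ge1}\subset\mathcal{B}$ with $\sum_n\|f_n\|_{\mathcal{B}}<\infty$. For each $n$, choose coefficients $(a^n_j)_{j\in J}$ with $f_n=\sum_j a^n_j\varphi_j$ and $\sum_j|a^n_j|\le\|f_n\|_{\mathcal{B}}+2^{-n}$. By the embedding, $\sum_n f_n$ converges in $\mathbf{B}$ to some $f$.

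Now set $a_j:=\sum_n a^n_j$. Then $\sum_j|a_j|\le\sum_{n,j}|a^n_j|<\infty$. Only countably many pairs $(n,j)$ carry nonzero coefficients, so the double family $\{a^n_j\varphi_j\}$ is absolutely summable in $\mathbf{B}$. Summing it in either order therefore gives the same result, namely
\begin{align*}
f=\sum_n\sum_j a^n_j\varphi_j=\sum_j a_j\varphi_j .
\end{align*}
Hence $f\in\mathcal{B}$.

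Applying the same argument to the tail shows that $f-\sum_{n\le N}f_n=\sum_{n>N}f_n$ has the representation with coefficients $\sum_{n>N}a^n_j$. This gives
\begin{align*}
\Big\|f-\sum_{n\le N}f_n\Big\|_{\mathcal{B}}\le\sum_{n>N}\big(\|f_n\|_{\mathcal{B}}+2^{-n}\big)\longrightarrow0 .
\end{align*}
So every absolutely summable series in $\mathcal{B}$ converges in $\mathcal{B}$, and $\mathcal{B}$ is a Banach space.

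\textbf{Main obstacle.} The delicate step is the rearrangement of the double series when $J$ is uncountable and the same index $j$ occurs in many of the $f_n$. I would handle it by first restricting to the countable set of indices $j$ that occur with a nonzero coefficient. Then I would invoke the elementary fact that an absolutely summable countable family in a Banach space has a sum independent of the order and grouping of its terms. Merging repeated indices into the single coefficient $a_j$ is then legitimate.
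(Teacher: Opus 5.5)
Your proof is correct and complete. The paper does not prove this statement: it quotes it from Feichtinger--Zimmermann and only applies it, with $\mathbf{B}=L^{\alpha}$, in Proposition~\ref{p3.8}. So there is no paper route to match, and the useful comparison is with the standard, shorter proof. That proof considers the synthesis map $T:\ell^{1}(J)\to\mathbf{B}$, $T(a)=\sum_{j\in J}a_j\varphi_j$, which is linear with $\|T(a)\|_{\mathbf{B}}\le M\|a\|_{\ell^{1}}$. Hence $\ker T$ is closed and $\mathcal{B}=T(\ell^{1}(J))$. Moreover $\|f\|_{\mathcal{B}}=\inf\{\|a\|_{\ell^{1}} : T(a)=f\}$ is exactly the quotient norm, so $(\mathcal{B},\|\cdot\|_{\mathcal{B}})$ is isometric to the Banach space $\ell^{1}(J)/\ker T$, and the embedding constant $M$ comes for free. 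Your direct check of the norm axioms and of the absolutely-summable-series criterion amounts to reproving completeness of this quotient by hand. It is self-contained, but longer.

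The step you flag as the main obstacle is handled correctly: countable support plus associativity of absolutely summable families in a Banach space does the job. You can, however, bypass it within your own framework. Put $a^{(N)}_j=\sum_{n\le N}a^{n}_j$. Then $\|a^{(N)}-a\|_{\ell^{1}}\le\sum_{n>N}\sum_{j}|a^{n}_j|\to0$, and finite linearity gives $\sum_j a^{(N)}_j\varphi_j=\sum_{n\le N}f_n$. Therefore
\[
\bigl\|\textstyle\sum_{n\le N}f_n-\sum_j a_j\varphi_j\bigr\|_{\mathbf{B}}\le M\|a^{(N)}-a\|_{\ell^{1}}\to0,
\]
so $f=\sum_j a_j\varphi_j$ without rearranging any double series in $\mathbf{B}$. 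This is exactly what continuity of $T$ provides in the quotient formulation.
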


\begin{prop} \label{p3.8}
Endowed with $\| \cdot \|_{\mathcal{H}(p(\cdot),q,\alpha)}$,	$\mathcal{H}(p(\cdot),q,\alpha)$ is a Banach space continuously embedded into $L^{\alpha}$.
\end{prop}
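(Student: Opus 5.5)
We will deduce Proposition \ref{p3.8} from Theorem \ref{th4.4}, taking $\mathbf{B}=L^{\alpha}$ and the family
\begin{align*}
\Phi=\left\{ St_{\rho}^{(\alpha)}g \;:\; \rho>0,\ g\in \left(L^{p(\cdot)},\ell^{q}\right),\ \|g\|_{p(\cdot),q}\leq 1\right\}.
\end{align*}
The plan has three steps: show that $\Phi$ is a bounded family in $L^{\alpha}$, identify $\mathcal{B}_{\Phi}$ with $\mathcal{H}(p(\cdot),q,\alpha)$ including equality of norms, and then read off completeness and the embedding from Theorem \ref{th4.4}.

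\textbf{Boundedness of $\Phi$.} We show $\|St_{\rho}^{(\alpha)}g\|_{\alpha}\leq C\|g\|_{p(\cdot),q}$ with $C$ independent of $\rho$.
\begin{itemize}
\item Since $\|St_{\rho}^{(\alpha)}g\|_{\alpha}=\|g\|_{\alpha}$, it suffices to bound $\|g\|_{\alpha}\leq C\|g\|_{p(\cdot),q}$.
\item On each unit cube $I_k^1$ we have $\alpha\leq p(\cdot)$, so Proposition \ref{p2.1}(2) with $|\Omega|=1$ gives $\|g\chi_{I_k^1}\|_{\alpha}\leq 2\|g\chi_{I_k^1}\|_{p(\cdot)}$.
\item Summing over $k$ and using $q\leq\alpha$, i.e.\ $\ell^{q}\hookrightarrow\ell^{\alpha}$,
\begin{align*}
\|g\|_{\alpha}=\Big(\sum_{k}\|g\chi_{I_k^1}\|_{\alpha}^{\alpha}\Big)^{1/\alpha}\leq 2\Big(\sum_k\|g\chi_{I_k^1}\|_{p(\cdot)}^{q}\Big)^{1/q}=2\|g\|_{p(\cdot),q}.
\end{align*}
\end{itemize}
Hence $\Phi$ is bounded by $2$ in $L^{\alpha}$.

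\textbf{Main obstacle.} Proposition \ref{p2.1} is stated for exponents in $\mathcal{P}(\mathbb{R}^d)$, whereas here $p(\cdot)$ may reach $\infty$. I would therefore verify the local embedding $L^{p(\cdot)}(I_k^1)\hookrightarrow L^{\alpha}(I_k^1)$ directly.
\begin{itemize}
\item Let $\|h\|_{p(\cdot)}\leq 1$ with $h$ supported in a unit cube.
\item Where $|h|\leq 1$, we have $|h|^{\alpha}\leq 1$, and the cube has measure $1$.
\item Where $|h|>1$, we have $|h|^{\alpha}\leq |h|^{p(x)}$, and the modular is at most $1$ (on $\{p=\infty\}$ one has $|h|\leq 1$ a.e.).
\item So $\int|h|^{\alpha}\leq 2$, which gives $\|h\|_{\alpha}\leq 2^{1/\alpha}\leq 2$.
\end{itemize}

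\textbf{Identification of $\mathcal{B}_{\Phi}$.} We then check that $\mathcal{B}_{\Phi}=\mathcal{H}(p(\cdot),q,\alpha)$ with $\|\cdot\|_{\mathcal{B}}=\|\cdot\|_{\mathcal{H}(p(\cdot),q,\alpha)}$.
\begin{itemize}
\item A countable sum $\sum_j a_j\varphi_j$ with $\sum_j|a_j|<\infty$ converges absolutely in $L^{\alpha}$. It therefore converges in $L^{p(\cdot)}_{\rm loc}$ as well: on a compact set $K$, $\|h\chi_K\|_{p(\cdot)}\lesssim_K\|h\|_{\alpha}$ because $p(\cdot)\geq\alpha$.
\item Conversely, an $\mathfrak{h}$-decomposition converging in $L^{p(\cdot)}_{\rm loc}$ has the same $L^{\alpha}$ limit. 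The series converges absolutely in $L^{\alpha}$, and both limits agree a.e.\ on every compact set, since both modes of convergence imply convergence in measure on compacts.
\item Thus the two sets of decompositions coincide, and so do the two infima.
\end{itemize}
The one-sided local estimate $\|h\chi_K\|_{p(\cdot)}\lesssim_K\|h\|_{\alpha}$ is not needed: by the previous point, a.e.\ convergence of partial sums on compacts already identifies $f$ in either sense.

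\textbf{Conclusion.} Theorem \ref{th4.4} gives that $\mathcal{H}(p(\cdot),q,\alpha)$ is a Banach space with
\begin{align*}
\|f\|_{\alpha}\leq 2\|f\|_{\mathcal{H}(p(\cdot),q,\alpha)},
\end{align*}
which is exactly the statement of Proposition \ref{p3.8}.
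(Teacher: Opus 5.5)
Your route is the paper's: apply Theorem~\ref{th4.4} with $\mathbf{B}=L^{\alpha}$ and the same family $\Phi$. Your bound $\|St_{\rho}^{(\alpha)}g\|_{\alpha}\leq 2\|g\|_{p(\cdot),q}$ is correct. It is in fact more careful than the paper's chain: the paper's step $\|\varphi\|_{\alpha,\alpha}\leq\|\varphi\|_{p(\cdot),\alpha}$, with constant $1$, can fail for a genuinely variable $p(\cdot)$, though this only affects the constant. Your second bullet is also correct: an $\mathfrak{h}$-decomposition is an absolutely convergent $L^{\alpha}$-representation with the same a.e.\ limit. Combined with Proposition~\ref{P4.1}, that bullet alone gives $\|f\|_{\alpha}\leq 2\|f\|_{\mathcal{H}(p(\cdot),q,\alpha)}$.

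The gap is the reverse inclusion, $\mathcal{B}_{\Phi}\subseteq\mathcal{H}(p(\cdot),q,\alpha)$ with $\|f\|_{\mathcal{H}(p(\cdot),q,\alpha)}\leq\|f\|_{\mathcal{B}}$, which is exactly what transfers completeness from $\mathcal{B}_{\Phi}$. Your justification, $\|h\chi_{K}\|_{p(\cdot)}\lesssim_{K}\|h\|_{\alpha}$ ``because $p(\cdot)\geq\alpha$'', is backwards: $p(\cdot)\geq\alpha$ gives $L^{p(\cdot)}(K)\hookrightarrow L^{\alpha}(K)$, not the converse. Your closing remark that this estimate is not needed does not repair the step. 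Identifying the a.e.\ limit is beside the point, because membership in $\mathcal{H}(p(\cdot),q,\alpha)$ requires $f\in L^{p(\cdot)}_{\rm loc}$ and convergence of the series in $L^{p(\cdot)}_{\rm loc}$. Absolute convergence in $L^{\alpha}$ gives neither: for constant $p>\alpha$, already $\|St_{\rho}^{(\alpha)}\chi_{[0,1)^{d}}\|_{p}=\rho^{d(1/p-1/\alpha)}$ blows up as $\rho\to0$.

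With the definition read literally, the inclusion is actually false when $p(\cdot)>\alpha$. Take constant $p>\alpha$, $g=\chi_{[0,1)^{d}}$ and $f=\sum_{n\geq1}2^{-n}St_{2^{-n^{2}}}^{(\alpha)}g$. Since $\|St_{\rho}^{(\alpha)}g\|_{\alpha}=1$, we have $f\in\mathcal{B}_{\Phi}$. But $f\geq 2^{-N+N^{2}d/\alpha}$ on $[0,2^{-N^{2}})^{d}$, so $\int_{[0,1)^{d}}f^{p}\geq 2^{-Np+N^{2}d(p/\alpha-1)}\to\infty$, and $f\notin L^{p}_{\rm loc}$. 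Moreover, the partial sums are Cauchy in $\mathcal{H}(p,q,\alpha)$, and by your own (valid) embedding their only possible limit is $f$; so the literally defined space is not even complete.

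Theorem~\ref{th4.4} can therefore only deliver the proposition if $\mathfrak{h}$-decompositions are understood to converge in $L^{\alpha}$, with no a priori requirement $f\in L^{p(\cdot)}_{\rm loc}$. That amounts to taking $\mathcal{H}(p(\cdot),q,\alpha):=\mathcal{B}_{\Phi}$, which is what the paper's one-line appeal to Theorem~\ref{th4.4} tacitly does. You should state that convention explicitly rather than try to prove an identification that cannot hold.
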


\begin{proof}
Since $ 1 \leq q \leq \alpha \leq p(\cdot)$, for any  
 $\varphi \in (L^{p(\cdot)},\ell^{q})$ and  $\rho > 0$, we have
 \begin{align*}
 	\|St_{\rho}^{(\alpha)} \varphi\|_{\alpha}=	\|\varphi\|_{\alpha}=\|\varphi\|_{\alpha,\alpha} \leq \|\varphi\|_{p(\cdot),\alpha} \leq \|\varphi\|_{p(\cdot),q} 
 \end{align*}
  and $\varphi\in L^{\alpha}$.
	The result follows from Theorem \ref{th4.4}, by using the Banach space $\mathbf{B} = L^{\alpha}$ and its bounded subset
\begin{align*}
	\Phi = \left\{St_{\rho}^{(\alpha)}\varphi : \rho >0 \text{ and } \|\varphi\|_{p(\cdot),q} \leq 1 \right\}.
\end{align*}
\end{proof} 

Point (1) of Proposition \ref{p2.6} and Proposition \ref{p3.8} yield obviously what follows.

\begin{coro} \label{c3.9}
	The function norm $\sigma_{p(\cdot),q,\alpha}$ has the Riesz-Fischer property.
\end{coro}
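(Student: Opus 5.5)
The corollary follows by combining Proposition \ref{p3.8} with Point (1) of Proposition \ref{p2.6}. By the Corollary following Proposition \ref{P4.1}, $\mathcal{H}(p(\cdot),q,\alpha)$ is the normed K\"{o}the space defined by the function norm $\sigma_{p(\cdot),q,\alpha}$. Its norm is $\|f\|_{\mathcal{H}(p(\cdot),q,\alpha)}=\sigma_{p(\cdot),q,\alpha}(|f|)$, which is exactly the norm $\|\cdot\|_{X^{\sigma}}$ appearing in Proposition \ref{p2.6}.

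Proposition \ref{p3.8} shows that this normed K\"{o}the space is complete, i.e.\ a Banach space. Point (1) of Proposition \ref{p2.6} states that completeness of $X^{\sigma}$ is equivalent to $\sigma$ having the Riesz-Fischer property. Applying this with $\sigma=\sigma_{p(\cdot),q,\alpha}$ yields the claim.

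The only point needing a check is that the norm in Proposition \ref{p3.8} is the same one that Proposition \ref{p2.6} refers to. Theorem \ref{th4.4} gives completeness for the atomic norm $\inf\sum|a_j|$ over representations in $L^{\alpha}$. The space norm is $\sigma_{p(\cdot),q,\alpha}$, whose infimum runs over $\mathfrak{h}$-decompositions converging in $L^{p(\cdot)}_{\rm loc}$.

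These two infima agree, because both are taken over absolutely summable combinations $\sum c_n St_{\rho_n}^{(\alpha)}f_n$ with $\|f_n\|_{p(\cdot),q}\le 1$. Such a series converges absolutely in $L^{\alpha}$, since each term satisfies $\|St_{\rho_n}^{(\alpha)}f_n\|_{\alpha}\le \|f_n\|_{p(\cdot),q}\le 1$. It also converges in $L^{p(\cdot)}_{\rm loc}$, since each term satisfies
\[
\|(St_{\rho_n}^{(\alpha)}f_n)\chi_K\|_{p(\cdot)}\le C_K\|St_{\rho_n}^{(\alpha)}f_n\|_{p(\cdot),q,\alpha}\le C_K\|f_n\|_{p(\cdot),q}\le C_K
\]
for every compact $K$. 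Both limits are therefore the same a.e. Given this identification, the argument is a direct two-line deduction.
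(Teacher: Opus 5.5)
Your core deduction (completeness from Proposition~\ref{p3.8}, then Point (1) of Proposition~\ref{p2.6}) is exactly the paper's two-line proof. The problem is the paragraph that identifies $\sigma_{p(\cdot),q,\alpha}$ with the atomic norm of Theorem~\ref{th4.4}.

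\textbf{The identification step is false.} In your chain $\|(St_{\rho_n}^{(\alpha)}f_n)\chi_K\|_{p(\cdot)}\le C_K\|St_{\rho_n}^{(\alpha)}f_n\|_{p(\cdot),q,\alpha}\le C_K\|f_n\|_{p(\cdot),q}$, the second inequality fails. The Fofana norm is dilation invariant and dominates the amalgam norm, so $\|St^{(\alpha)}_{\rho}g\|_{p(\cdot),q,\alpha}=\|g\|_{p(\cdot),q,\alpha}\ge\|g\|_{p(\cdot),q}$, which is the opposite direction. Moreover, in the present range $q\le\alpha\le p(\cdot)$ (opposite to the range of Proposition~\ref{p2.9}(1)), that norm is generally infinite.

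The conclusion itself also fails. Let $Q=[0,1)^d$, so $\|\chi_Q\|_{p(\cdot),q}=1$. For $0<\rho<1$,
\[
\bigl\|St^{(\alpha)}_{\rho}\chi_Q\bigr\|_{p(\cdot)}=\rho^{-\frac{d}{\alpha}}\bigl\|\chi_{[0,\rho)^d}\bigr\|_{p(\cdot)}\ge\rho^{d\left(\frac{1}{p_{-}}-\frac{1}{\alpha}\right)}\longrightarrow\infty\quad(\rho\to 0)
\]
as soon as $p_{-}>\alpha$ (e.g.\ a constant $p>\alpha$). So the atoms are not uniformly bounded in $L^{p(\cdot)}(K)$.

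Now choose $\rho_n\to0$ fast enough that $h_n:=2^{-n}St^{(\alpha)}_{\rho_n}\chi_Q$ satisfies $\|h_n\|_{p(\cdot)}\ge 2^{n}$. Since $\|h_n\|_{\alpha}=2^{-n}$, the series $H=\sum_n h_n$ converges in $L^{\alpha}$, so $H\in\mathcal{B}_\Phi$. But $H\ge h_n\ge0$ for every $n$, so $H\notin L^{p(\cdot)}_{\rm loc}$. Hence $L^{\alpha}$-representations and $\mathfrak{h}$-decompositions are not the same families of series. Only the easy inequality $\|f\|_{\mathcal{B}_\Phi}\le\sigma_{p(\cdot),q,\alpha}(f)$ survives.

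\textbf{A better estimate cannot repair this.} Each $h_n$ has the one-term $\mathfrak{h}$-decomposition $\{(2^{-n},\rho_n,\chi_Q)\}$, padded with zero terms. Hence $\sum_n\sigma_{p(\cdot),q,\alpha}(h_n)\le 1$, while $\sigma_{p(\cdot),q,\alpha}\bigl(\sum_n h_n\bigr)=\infty$. So under the literal convention of the paper (membership in, and convergence in, $L^{p(\cdot)}_{\rm loc}$), the Riesz--Fischer property fails whenever $p_{-}>\alpha$, and with it Proposition~\ref{p3.8}.

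\textbf{When the argument is valid.} The two-line deduction, yours and the paper's, is correct exactly when $\mathfrak{h}$-decompositions are understood to converge in $L^{\alpha}$, with no a priori requirement $f\in L^{p(\cdot)}_{\rm loc}$. Convergence in $L^{1}_{\rm loc}$ gives the same notion, because these series always converge absolutely in $L^{\alpha}$. Under that convention, $\mathcal{H}(p(\cdot),q,\alpha)=\mathcal{B}_\Phi$ isometrically, and Theorem~\ref{th4.4} applies verbatim. This is what the proof of Proposition~\ref{p3.8} tacitly assumes. You should state that convention explicitly rather than claim the identification holds as the definition is written.
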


\begin{prop} \label{l3.10}
	The function norm 	$\sigma_{p(\cdot),q,\alpha}$ is saturated.
\end{prop}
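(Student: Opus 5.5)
Saturation asks that every measurable $E\subseteq\mathbb{R}^d$ with $|E|>0$ contain a measurable $F$ with $|F|>0$ and $\sigma_{p(\cdot),q,\alpha}(\chi_F)<\infty$. The plan is to let $F$ be the part of $E$ lying in one unit cube and to write $\chi_F$ as a single-term $\mathfrak{h}$-decomposition.

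First choose $F$. Since $\mathbb{R}^d=\bigcup_{k\in\mathbb{Z}^d} I_k^1$, the identity $|E|=\sum_k |E\cap I_k^1|$ gives some $k_0$ with $|E\cap I_{k_0}^1|>0$. Set $F=E\cap I_{k_0}^1$. Then $0<|F|\le 1$, and $F$ is contained in the single cube $I_{k_0}^1$.

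Next, decompose $\chi_F$. Take $\rho=1$: since $St_1^{(\alpha)}$ is the identity, the triple $(c,1,\chi_F/c)$ with $c=\|\chi_F\|_{p(\cdot),q}$ is an $\mathfrak{h}$-decomposition, provided $c$ is finite. Because $F$ meets only the cube $I_{k_0}^1$, we have
\begin{align*}
\|\chi_F\|_{p(\cdot),q}=\|\chi_F\|_{p(\cdot)}.
\end{align*}
So we must check $\|\chi_F\|_{p(\cdot)}<\infty$, and it suffices to prove $\|\chi_F\|_{p(\cdot)}\le 1$. By the definition of the Luxemburg norm this follows from the modular estimate $\int_F 1^{p(x)}\,dx=|F|\le 1$. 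This holds whenever $p(\cdot)<\infty$.

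The main obstacle is the set where $p(x)=\infty$, since the hypothesis allows $p(\cdot)\le\infty$. There we use the standard convention that the Luxemburg modular contains the term $\|f\chi_{\{p=\infty\}}\|_{\infty}$. This term equals $1$ for $f=\chi_F$ with $\lambda=1$, so the modular is at most $2$. Taking $\lambda=2$ makes it at most $1$, and hence $\|\chi_F\|_{p(\cdot)}\le 2$. Either way $c<\infty$.

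Finally, we may assume $c>0$, which holds because $|F|>0$ and the norm is strictly positive. Then $f_1=\chi_F/c$ satisfies $\|f_1\|_{p(\cdot),q}=1$ and $\chi_F=c\,St_1^{(\alpha)}f_1$. This gives
\begin{align*}
\sigma_{p(\cdot),q,\alpha}(\chi_F)\le \|\chi_F\|_{p(\cdot),q}<\infty,
\end{align*}
which proves saturation. Alternatively, one can cite the density of $(L^{p(\cdot)},\ell^q)$ in $\mathcal{H}(p(\cdot),q,\alpha)$ from Proposition~\ref{p2.12}. That result implies the continuous inclusion $\|g\|_{\mathcal{H}}\le\|g\|_{p(\cdot),q}$, and so yields the same conclusion.
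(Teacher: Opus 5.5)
Your proof is correct and is essentially the paper's argument: the paper also cuts $E$ down to a bounded piece and then uses $\sigma_{p(\cdot),q,\alpha}(\chi_F)\le\|\chi_F\|_{p(\cdot),q}$, which is exactly what your one-term $\mathfrak{h}$-decomposition with $\rho=1$ justifies. The only difference is that the paper takes $F=E\cap[-n_0,n_0)^d$ and sums over the unit cubes it meets, whereas your single unit cube makes the bound on $\|\chi_F\|_{p(\cdot),q}$ immediate.

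The only slip is in your closing aside. Density of $(L^{p(\cdot)},\ell^{q})$ in $\mathcal{H}(p(\cdot),q,\alpha)$ does not by itself yield $\|g\|_{\mathcal{H}(p(\cdot),q,\alpha)}\le\|g\|_{p(\cdot),q}$; that inequality is precisely your one-term decomposition. Also, Proposition~\ref{p2.12} is stated only for $p(\cdot)\in\mathcal{P}(\mathbb{R}^d)$, so your direct argument is the one to keep.
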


\begin{proof}
	Let $E$ be any measurable subset of  $\mathbb{R}^{d}$ such that $|E|>0.$
	We set, for any integer $n\geq 1$, $Q_{n}=[-n,n)^{d}$.
	Note that, $\left\{E\cap Q_{n}\right\}_{n\geq 1}$ is a non-decreasing sequence of measurable subsets of  $E$ whose union is $E$. Consequently, we have $\displaystyle \lim_{n\rightarrow \infty}|E\cap Q_{n}|=|E|$. Thus, there exists a  positive integer $n_{0}$ such that $F_{n_{0}}:=E\cap Q_{n_{0}} \subset E$ and $0 <|F_{n_{0}}|\leq (2n_{0})^{d}$.
	Moreover, we have
\begin{align*}
		\|\chi_{F_{n_{0}}}\|_{p(\cdot),q} &\leq
	\left\{
	\begin{array}{rl}
		&\displaystyle \left(\sum_{k\in \mathbb{Z}^d, I^{1}_{k}\subset Q_{n_{0}}} \left\|\chi_{Q_{n_{0}}\cap I^{1}_{k}}\right\|_{p(\cdot)} \right)^{\frac{1}{q}}  \quad \mbox{ if } q<\infty, \\\\
		&\displaystyle\sup_{k\in \mathbb{Z}^d, I^{1}_{k}\subset Q_{n_{0}}} \left\|\chi_{Q_{n_{0}}\cap I^{1}_{k}} \right\|_{p(\cdot)}  \quad \quad \quad\; \mbox{ if } q =\infty.
	\end{array}
	\right. 
\end{align*}
It follows that
	\begin{align*}
		\|\chi_{F_{n_{0}}}\|_{p(\cdot),q}& \leq 	\left\{
	\begin{array}{rl}
		&\left(2n_{0} \right)^{\frac{d}{q}}  \quad \mbox{ if } q<\infty, \\\\
		&1 \quad \quad \quad \quad \mbox{if } q =\infty.
	\end{array}
	\right.
	\end{align*}
	This implies that 
	\begin{align*}
		\left\|\chi_{F_{n_{0}}} \right\|_{p(\cdot),q} \leq (2n_{0})^{\frac{d}{q}}<\infty.
	\end{align*}
Hence, we get
\begin{align*}
	\sigma_{p(\cdot),q,\alpha}\left(\chi_{F_{n_{0}}} \right)= \|\chi_{F_{n_{0}}}\|_{\mathcal{H}(p(\cdot),q,\alpha)} \leq 	\left\|\chi_{F_{n_{0}}} \right\|_{p(\cdot),q} <\infty.
\end{align*}  
Consequently, we may conclude that  $\sigma_{p(\cdot),q,\alpha}$ is saturated.   
\end{proof}


\subsection{Absolute continuity of the norm $\sigma_{p(\cdot),q,\alpha}$}

This subsection is devoted to study the absolute continuity of the norm $\sigma_{p(\cdot),q,\alpha}$. Here, we assume that $\Omega$ is a measurable subset of $\mathbb{R}^d$ with finite measure, that $\epsilon$ is any positive real number and $Q(0,r)$ denotes the cube centred at $0$ with side length $r$.

\begin{lem}\label{l3.11}
 Let $p(\cdot) \in \mathcal{P}(\mathbb{R}^d)$ and $f$ be any element of $L^{p(\cdot)}$. Then,
	\begin{enumerate}
		\item there exists $\delta_{\epsilon}>0$ such that $\|f\chi_{\Omega}\|_{p(\cdot)}<\epsilon$ whenever $|\Omega|<\delta_{\epsilon}$;
		\item there exists $R_{\epsilon}>0$ such that $\|f\chi_{\mathbb{R}^{d}\backslash Q(0,r) }\|_{p(\cdot)}<\epsilon$ whenever $R_{\epsilon}\leq r <\infty$.
	\end{enumerate}
\end{lem}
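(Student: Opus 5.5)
Both assertions come from passing to the modular $\int |f|^{p(x)}\,dx$ and using the fact that $p_{+}<\infty$ for $p(\cdot)\in\mathcal{P}(\mathbb{R}^d)$, which lets us compare norm and modular. The comparison we need is the standard one: if $\|g\|_{p(\cdot)}\geq \epsilon$ with $0<\epsilon\leq 1$, then
\begin{align*}
\int_{\mathbb{R}^d}|g(x)|^{p(x)}dx \geq \epsilon^{p_{+}}.
\end{align*}
It follows from the homogeneity bound $\int (|g|/\lambda)^{p(x)}dx\geq \lambda^{-p_{+}}\int |g|^{p(x)}dx$ for $\lambda\leq 1$, together with the fact that the modular of $g/\|g\|_{p(\cdot)}$ equals $1$ when $p_{+}<\infty$. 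The contrapositive is what we will use: $\int |g|^{p(x)}dx<\epsilon^{p_{+}}$ implies $\|g\|_{p(\cdot)}<\epsilon$. We may assume $\epsilon\leq 1$ throughout.

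For $f\in L^{p(\cdot)}$, first note that $h:=|f|^{p(\cdot)}\in L^{1}$. Indeed, $\int (|f|/\lambda)^{p(x)}dx<\infty$ for some $\lambda>0$, and if $\lambda\geq1$ then $|f|^{p(x)}\leq \lambda^{p_{+}}(|f|/\lambda)^{p(x)}$; if $\lambda<1$ the finiteness is immediate by monotonicity.

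For (1), apply the absolute continuity of the Lebesgue integral to $h\in L^1$. This gives $\delta_\epsilon>0$ such that $\int_\Omega h<\epsilon^{p_{+}}$ whenever $|\Omega|<\delta_\epsilon$, and hence $\|f\chi_\Omega\|_{p(\cdot)}<\epsilon$.

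For (2), note that $h\chi_{\mathbb{R}^d\setminus Q(0,r)}\to0$ pointwise as $r\to\infty$ and is dominated by $h$. Dominated (or monotone) convergence therefore gives $R_\epsilon$ with $\int_{\mathbb{R}^d\setminus Q(0,r)}h<\epsilon^{p_{+}}$ for all $r\geq R_\epsilon$; the sets decrease in $r$, so every $r\geq R_\epsilon$ works. The conclusion again follows from the norm–modular comparison.

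The only real obstacle is the norm–modular comparison, and that is exactly where $p_{+}<\infty$ is used. To justify that the modular of $g/\|g\|_{p(\cdot)}$ is exactly $1$, we use continuity of $\lambda\mapsto\int(|g|/\lambda)^{p(x)}dx$, which holds by dominated convergence once the modular is finite. Alternatively, one can avoid equality altogether: for $\lambda<\|g\|_{p(\cdot)}$ the modular at $\lambda$ exceeds $1$, so taking $\lambda=\epsilon\leq1$ gives $1<\int(|g|/\epsilon)^{p(x)}dx\leq\epsilon^{-p_{+}}\int|g|^{p(x)}dx$. This direct route is the one to write up.
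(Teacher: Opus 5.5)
Your argument is correct, but it is organized differently from the paper's. You pass once to the modular. From $f\in L^{p(\cdot)}$ you get $h=|f|^{p(\cdot)}\in L^{1}$, which uses $p_{+}<\infty$. You then obtain (1) from the absolute continuity of the Lebesgue integral of $h$, and (2) from dominated convergence applied to $h\chi_{\mathbb{R}^{d}\backslash Q(0,r)}$. Finally you return to the norm through the comparison ``modular $<\epsilon^{p_{+}}$ implies norm $<\epsilon$'' for $\epsilon\leq 1$. The paper proves (1) by bounding $\|f\chi_{\Omega}\|_{p(\cdot)}\leq C\|f\chi_{\Omega}\|_{p_{+}}$ via \eqref{eq2.5} and invoking absolute continuity of the $L^{p_{+}}$ norm. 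It proves (2) by quoting the dominated convergence theorem in $L^{p(\cdot)}$.

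For (2) the two proofs rest on the same idea: you prove by hand the special case of the $L^{p(\cdot)}$ dominated convergence theorem that the paper cites. For (1) your route is the sounder one. The paper's bound only helps if $f\chi_{\Omega}\in L^{p_{+}}$, and this does not follow from $f\in L^{p(\cdot)}$ when $p(\cdot)$ is nonconstant. For example, take $d=1$, $p=2$ on $[0,1)$, $p=3$ elsewhere, and $f(x)=x^{-1/3}\chi_{(0,1)}(x)$. Then $\int|f(x)|^{p(x)}dx=3$, yet $\|f\chi_{(0,\delta)}\|_{3}=\infty$ for every $\delta>0$, so the right-hand side of the paper's estimate is infinite. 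Your argument needs only $|f|^{p(\cdot)}\in L^{1}$. It also avoids having to control how the constant in \eqref{eq2.5} depends on $\Omega$. What the paper's version buys is brevity through citation; yours is self-contained and valid for every $f\in L^{p(\cdot)}$.

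Two small slips should be fixed in the write-up. First, the homogeneity bound in your first paragraph is reversed. For $\lambda\leq 1$ the correct bound is $\int(|g|/\lambda)^{p(x)}dx\leq\lambda^{-p_{+}}\int|g|^{p(x)}dx$, which is the direction your direct route actually uses. Second, the direct route with $\lambda=\epsilon$ needs $\epsilon<\|g\|_{p(\cdot)}$ strictly, so as written it only yields $\|g\|_{p(\cdot)}\leq\epsilon$. Instead, suppose $\|g\|_{p(\cdot)}\geq\epsilon$. Then every $\lambda<\epsilon$ gives $\int|g|^{p(x)}dx>\lambda^{p_{+}}$, and letting $\lambda\uparrow\epsilon$ gives $\int|g|^{p(x)}dx\geq\epsilon^{p_{+}}$. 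This is the contrapositive of the strict statement you need.
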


\begin{proof} 
	\begin{enumerate}
	\item Since $|\Omega|<\infty$, it follows from \eqref{eq2.5} that $\|f\chi_{\Omega}\|_{p(\cdot)} \leq C \,\|f\chi_{\Omega}\|_{p_{+}}$, where $C$ is a positive constant. Furthermore, since $p_{+}<\infty$, a well-known result in classical Lebesgue spaces shows that $\displaystyle \lim_{|\Omega|\rightarrow 0}\|f\chi_{\Omega}\|_{p_{+}}=0$. Therefore, we get $\displaystyle \lim_{|\Omega|\rightarrow 0}\|f\chi_{\Omega}\|_{p(\cdot)}=0$.
	\item We set, for all $r>0$, $f_{r}=f\chi_{Q(0,r)}$. Then, we have $\displaystyle \lim_{r \rightarrow \infty} f_{r} =f$ pointwise almost everywhere. It is clear that, for all $r>0$, $|f_{r}|\leq |f|$ and $f\chi_{\mathbb{R}^{d}\backslash Q(0,r) }=f-f_{r}$. 
	Therefore, using the dominated convergence theorem in variable Lebesgue spaces (see \cite[Theorem 2.62]{Cruz-Fior-2013}), we obtain
	\begin{align*}
		 \displaystyle \lim_{r \rightarrow \infty} \|f\chi_{\mathbb{R}^{d}\backslash Q(0,r)}\|_{p(\cdot)}= \displaystyle \lim_{r \rightarrow \infty} \|f-f_{r}\|_{p(\cdot)}=0.
	\end{align*} 
	\end{enumerate}

	The proof is complete.
\end{proof}

Lemma \ref{l3.11} may be extended to the setting of  variable-exponent Wiener amalgam spaces.
\begin{lem}\label{l3.12}
	Let $p(\cdot) \in \mathcal{P}(\mathbb{R}^d)$, $1\leq q \leq \infty$ and $f$ be any element of $\left(L^{p(\cdot)},\ell^{q}\right)$. Then,
	\begin{enumerate}
		\item there exists $\delta_{\epsilon}>0$ such that $\|f\chi_{\Omega}\|_{p(\cdot),q}<\epsilon$ whenever $|\Omega|<\delta_{\epsilon}$;
		\item there exists $R_{\epsilon}>0$ such that $\|f\chi_{\mathbb{R}^{d}\backslash Q(0,r)}\|_{p(\cdot),q}<\epsilon$ whenever $R_{\epsilon}\leq r <\infty$.
	\end{enumerate}
\end{lem}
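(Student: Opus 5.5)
The plan is to reduce both assertions to Lemma \ref{l3.11}, applied to finitely many local pieces $f\chi_{I_{k}^{1}}$. The infinitely many remaining cubes are controlled by the $\ell^{q}$-tail. This requires $q<\infty$, and I treat that case first. Fix $f\in (L^{p(\cdot)},\ell^{q})$ and $\epsilon>0$. Since $\sum_{k\in\mathbb{Z}^d}\|f\chi_{I_{k}^{1}}\|_{p(\cdot)}^{q}<\infty$, I choose a finite set $F\subset\mathbb{Z}^d$ with
\begin{align*}
\sum_{k\notin F}\|f\chi_{I_{k}^{1}}\|_{p(\cdot)}^{q}<\left(\epsilon/2\right)^{q}.
\end{align*}
Throughout I will use the lattice property of the Luxemburg norm: $|g|\le|h|$ implies $\|g\|_{p(\cdot)}\le\|h\|_{p(\cdot)}$. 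I will also use the elementary inequality $(a+b)^{1/q}\le a^{1/q}+b^{1/q}$.

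For (2), I take $R_{\epsilon}$ so large that $I_{k}^{1}\subset Q(0,r)$ for every $k\in F$ and every $r\ge R_{\epsilon}$; this is possible because $F$ is finite. Then $f\chi_{\mathbb{R}^d\backslash Q(0,r)}\chi_{I_{k}^{1}}=0$ for $k\in F$. For $k\notin F$, the lattice property gives $\|f\chi_{\mathbb{R}^d\backslash Q(0,r)}\chi_{I_{k}^{1}}\|_{p(\cdot)}\le\|f\chi_{I_{k}^{1}}\|_{p(\cdot)}$. Summing over $k$ yields $\|f\chi_{\mathbb{R}^d\backslash Q(0,r)}\|_{p(\cdot),q}<\epsilon/2<\epsilon$. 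This part does not even need Lemma \ref{l3.11}.

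For (1), each $f\chi_{I_{k}^{1}}$ with $k\in F$ lies in $L^{p(\cdot)}$. Writing $\gamma$ for the cardinality of $F$, Lemma \ref{l3.11}(1) gives $\delta_{k}>0$ such that $\|f\chi_{I_{k}^{1}}\chi_{\Omega}\|_{p(\cdot)}<\epsilon\,2^{-1}\gamma^{-1/q}$ whenever $|\Omega|<\delta_{k}$. I set $\delta_{\epsilon}=\min_{k\in F}\delta_{k}>0$. If $|\Omega|<\delta_{\epsilon}$, I split the sum defining $\|f\chi_{\Omega}\|_{p(\cdot),q}^{q}$ into $k\in F$ and $k\notin F$. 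The first part is less than $\gamma\,(\epsilon/2)^{q}\gamma^{-1}=(\epsilon/2)^{q}$. The second part is bounded, via the lattice property, by the tail, which is less than $(\epsilon/2)^{q}$. Taking $q$-th roots gives $\|f\chi_{\Omega}\|_{p(\cdot),q}<\epsilon$.

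The main obstacle is the case $q=\infty$. There the tail argument is unavailable, and the statement as written appears to fail. For instance, $f\equiv1$ violates (2). Likewise, an $f$ that has unit local norm on each cube but is concentrated on sets of measure $2^{-|k|}$ violates (1). My plan is therefore to state and prove the lemma for $1\le q<\infty$. When $q=\infty$, I would either restrict to those $f$ for which $\|f\chi_{I_{k}^{1}}\|_{p(\cdot)}\to0$ as $|k|\to\infty$, in which case the same proof with $\sup$ replacing the sum works, or check that later uses of the lemma only involve $q'$ with $q'<\infty$, that is, $q>1$.
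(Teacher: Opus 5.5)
Your argument for $1\le q<\infty$ is correct and follows the paper's route: control the $\ell^{q}$-tail, then apply Lemma \ref{l3.11} to the finitely many remaining pieces $f\chi_{I_{k}^{1}}$. In (2) you make a small simplification: the finitely many cubes eventually lie inside $Q(0,r)$, so Lemma \ref{l3.11}(2), which the paper invokes in \eqref{eq4.3}, is not needed.

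Your objection for $q=\infty$ is also correct. The paper's proof simply asserts the $q=\infty$ case of \eqref{eq4.1}, which requires $\|f\chi_{I_{k}^{1}}\|_{p(\cdot)}\to 0$ as $|k|\to\infty$, and your examples show this fails. The error is harmless for the paper. The lemma's only use (Proposition \ref{prop3.13}) is under $1\le q\le\alpha\le p(\cdot)$ with $p_{+}<\infty$, which already forces $q\le p_{-}<\infty$.
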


\begin{proof}
	 Since $f \in \left(L^{p(\cdot)},\ell^{q}\right)$, there exists an integer $n_{0}\geq 1$ such that
\begin{align} \label{eq4.1}
	\left\{
	\begin{array}{rl}
	&	\displaystyle
		\sum_{k\in \mathbb{Z}^{d},|k|\geq n_{0}} \left\|f\chi_{I_{k}^{1}} \right\|_{p(\cdot)}^{q}<\left( \frac{\epsilon}{2}\right)^{q}  \qquad  \qquad \qquad \mbox{ if } q<\infty,\\
	&	\sup \left\{ \left\|f\chi_{I_{k}^{1}} \right\|_{p(\cdot)} : k\in \mathbb{Z}^{d},|k|\geq n_{0} \right\}<\frac{\epsilon}{2} \quad \; \; \mbox{ if } q=\infty.
	\end{array}
	\right.
\end{align}
Moreover, since the cardinality of the set $\left\{ k\in \mathbb{Z}^{d} : |k|\leq n_{0} \right\}$ is finite,
 Point $(1)$ of Lemma \ref{l3.11} asserts that there exists a real number $\delta_{\epsilon}>0$ such that, whenever $|\Omega|<\delta_{\epsilon}$, we have
 \begin{align} \label{eq4.2}
 	\left\{
 	\begin{array}{rl}
 		&	\displaystyle
 		\sum_{k\in \mathbb{Z}^{d},|k|< n_{0}} \left\|(f\chi_{\Omega})\chi_{I_{k}^{1}} \right\|_{p(\cdot)}^{q}<\left( \frac{\epsilon}{2}\right)^{q} \quad \quad \quad \quad \quad \quad \mbox{ if } q<\infty,\\
 		&	\max \left\{ \left\|(f\chi_{\Omega})\chi_{I_{k}^{1}} \right\|_{p(\cdot)} : k\in \mathbb{Z}^{d},|k|< n_{0} \right\}<\frac{\epsilon}{2} \quad  \; \mbox{ if } q=\infty.
 	\end{array}
 	\right.
 \end{align}
 Similarly, Point $(2)$ of Lemma \ref{l3.11} implies that there exists $R_{\epsilon}>0$ such that, whenever $R_{\epsilon}\leq r <\infty$, we have
 \begin{align} \label{eq4.3}
 	\left\{
 	\begin{array}{rl}
 		&	\displaystyle
 		\sum_{k\in \mathbb{Z}^{d},|k|< n_{0}} \left\|\left(f\chi_{\mathbb{R}^{d}\backslash Q(0,r)}\right)\chi_{I_{k}^{1}} \right\|_{p(\cdot)}^{q}<\left( \frac{\epsilon}{2}\right)^{q} \quad \quad \quad \quad \quad \quad \mbox{ if } q<\infty,\\
 		&	\max \left\{ \left\|\left(f\chi_{\mathbb{R}^{d}\backslash Q(0,r)}\right)\chi_{I_{k}^{1}} \right\|_{p(\cdot)} : k\in \mathbb{Z}^{d},|k|< n_{0} \right\}<\frac{\epsilon}{2} \quad  \; \mbox{ if } q=\infty.
 	\end{array}
 	\right.
 \end{align}
 A combination of \eqref{eq4.1} and \eqref{eq4.2} proves Point $(1)$. Likewise \eqref{eq4.1} and \eqref{eq4.3} prove Point $(2)$.
\end{proof}

Next, Lemma \ref{l3.12} leads to the following proposition. 

\begin{prop}\label{prop3.13}
	Let $p(\cdot) \in \mathcal{P}(\mathbb{R}^d)$ and $f$ be any element of $\mathcal{H}(p(\cdot),q,\alpha)$. Then,
\begin{enumerate}
\item there exists $\delta_{\epsilon}>0$ such that $\|f\chi_{\Omega}\|_{\mathcal{H}(p(\cdot),q,\alpha)}<\epsilon$ whenever $|\Omega|<\delta_{\epsilon}$,
\item there exists $R_{\epsilon}>0$ such that $\|f\chi_{\mathbb{R}^{d}\backslash Q(0,r) }\|_{\mathcal{H}(p(\cdot),q,\alpha)}<\epsilon$ whenever $R_{\epsilon}\leq r <\infty$.
\end{enumerate}
\end{prop}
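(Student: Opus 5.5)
The plan is to approximate $f$ in $\mathcal{H}(p(\cdot),q,\alpha)$ by an element of the amalgam space $\left(L^{p(\cdot)},\ell^{q}\right)$ and then apply Lemma \ref{l3.12} to that element. The transfer back uses two facts: the lattice property (Point (2) of Proposition \ref{p2.12}), and the bound $\|g\|_{\mathcal{H}(p(\cdot),q,\alpha)}\leq \|g\|_{p(\cdot),q}$ for $g\in\left(L^{p(\cdot)},\ell^{q}\right)$. The latter follows from the trivial $\mathfrak{h}$-decomposition $g=\|g\|_{p(\cdot),q}\,St_{1}^{(\alpha)}\bigl(g/\|g\|_{p(\cdot),q}\bigr)$, exactly as it is used in the proof of Proposition \ref{l3.10}.

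First, by Point (1) of Proposition \ref{p2.12}, $\left(L^{p(\cdot)},\ell^{q}\right)$ is dense in $\mathcal{H}(p(\cdot),q,\alpha)$. I pick $g\in\left(L^{p(\cdot)},\ell^{q}\right)$ with $\|f-g\|_{\mathcal{H}(p(\cdot),q,\alpha)}<\epsilon/2$. For any measurable set $E$, multiplication by $\chi_E$ satisfies $|(f-g)\chi_E|\leq|f-g|$. The lattice property therefore gives $\|(f-g)\chi_{E}\|_{\mathcal{H}(p(\cdot),q,\alpha)}\leq \|f-g\|_{\mathcal{H}(p(\cdot),q,\alpha)}<\epsilon/2$. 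Hence, for every measurable $E$,
\begin{align*}
\|f\chi_{E}\|_{\mathcal{H}(p(\cdot),q,\alpha)}\leq \frac{\epsilon}{2}+\|g\chi_{E}\|_{\mathcal{H}(p(\cdot),q,\alpha)}\leq \frac{\epsilon}{2}+\|g\chi_{E}\|_{p(\cdot),q}.
\end{align*}
Second, I apply Lemma \ref{l3.12} to $g$ with $\epsilon/2$ in place of $\epsilon$. Point (1) of that lemma gives $\delta_{\epsilon}>0$ such that $\|g\chi_{\Omega}\|_{p(\cdot),q}<\epsilon/2$ whenever $|\Omega|<\delta_{\epsilon}$. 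Point (2) gives $R_{\epsilon}>0$ such that $\|g\chi_{\mathbb{R}^d\backslash Q(0,r)}\|_{p(\cdot),q}<\epsilon/2$ for $r\geq R_{\epsilon}$. Taking $E=\Omega$, respectively $E=\mathbb{R}^d\backslash Q(0,r)$, in the display yields both assertions.

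The only points needing care are the following:
\begin{enumerate}
\item The inequality $\|g\|_{\mathcal{H}}\leq\|g\|_{p(\cdot),q}$ is legitimate, since the one-term decomposition with $\rho=1$ is admissible and $St_1^{(\alpha)}$ is the identity.
\item The density statement requires $p(\cdot)\in\mathcal{P}(\mathbb{R}^d)$, which is assumed here.
\end{enumerate}
The main obstacle is really just recognizing that density plus solidity reduces everything to Lemma \ref{l3.12}. Working directly with an $\mathfrak{h}$-decomposition of $f$ would be the harder alternative, because the dilations $St_{\rho_n}^{(\alpha)}$ distort both the measure of $\Omega$ and the cube $Q(0,r)$ in a way that is not uniform in $n$.
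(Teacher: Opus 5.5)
Your proof is correct. Its outer skeleton matches the paper's: approximate $f$ within $\epsilon/2$, pass to the approximant using the triangle inequality and the lattice property, then invoke Lemma \ref{l3.12}. The choice of approximant and the handling of the dilations are different, however.

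The paper does not use the density of $\left(L^{p(\cdot)},\ell^{q}\right)$ in $\mathcal{H}(p(\cdot),q,\alpha)$. It truncates an $\mathfrak{h}$-decomposition of $f$ to $h_{n_0}=\sum_{n=1}^{n_0}c_nSt_{\rho_n}^{(\alpha)}f_n$ with $\sum_{n>n_0}|c_n|<\epsilon/2$. It then pushes the cut-off inside each dilation, $\bigl(St_{\rho_n}^{(\alpha)}f_n\bigr)\chi_{\Omega}=St_{\rho_n}^{(\alpha)}\bigl(f_n\chi_{\rho_n^{-1}\Omega}\bigr)$, which gives $\|h_{n_0}\chi_{\Omega}\|_{\mathcal{H}(p(\cdot),q,\alpha)}\leq\sum_{n=1}^{n_0}|c_n|\,\|f_n\chi_{\rho_n^{-1}\Omega}\|_{p(\cdot),q}$. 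The same works with $\rho_n^{-1}\bigl(\mathbb{R}^d\backslash Q(0,r)\bigr)=\mathbb{R}^d\backslash Q(0,r/\rho_n)$. Lemma \ref{l3.12} is applied to each of the finitely many $f_n$, and the thresholds are rescaled via $\delta_\epsilon=\min_{n\leq n_0}\rho_n^{d}\delta_\epsilon^{n}$ and $R_\epsilon=\max_{n\leq n_0}\rho_nR_\epsilon^{n}$.

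So the non-uniformity in $n$ that you name as the obstacle to working with the decomposition is removed simply by truncating first; that route is not really harder. What it buys is self-containment. The dilation is absorbed by the $\mathcal{H}$-norm itself, so the paper never has to control the amalgam norm of a dilated function.

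Your reduction instead uses Proposition \ref{p2.12}(1) as a black box. The nontrivial input behind that density statement is exactly that the dilates $St_{\rho}^{(\alpha)}f_n$ stay in $\left(L^{p(\cdot)},\ell^{q}\right)$, via Lemma \ref{lem3.11} together with the equivalence of amalgam norms for different cube sizes. Granting that cited result, your argument is shorter and applies Lemma \ref{l3.12} only once, to a single function $g$.

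Both arguments need Lemma \ref{l3.12} only for $q<\infty$. This is automatic here, since $q\leq\alpha\leq p_{-}<\infty$, and the tail argument in that lemma genuinely requires it.
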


\begin{proof}
	If $f=0$ then the desired result is obvious. Thus we assume that $f \neq 0$. Let  $\left\{ \left( c_{n},\rho_{n},f_{n}\right)\right\}_{n\geq 1}$ be any element of $\mathfrak{h}$-$\mathcal{D}(f,p(\cdot),q,\alpha)$. There exists an integer $n_{0}\geq 1$  such that $\displaystyle\sum_{n\geq n_{0}+1}|c_{n}|< \frac{\epsilon}{2}$. Let us set 	$h_{n_{0}}=\displaystyle\sum_{n= 1}^{n_{0}}c_{n}St_{\rho_{n}}^{(\alpha)}f_{n}$.\\
$\bullet$	We have
\begin{align*}
	\|f\chi_{\Omega}\|_{\mathcal{H}(p(\cdot),q,\alpha)} &\leq
 \|f-h_{n_{0}}\|_{\mathcal{H}(p(\cdot),q,\alpha)} + \|h_{n_{0}}\chi_{\Omega}\|_{\mathcal{H}(p(\cdot),q,\alpha)}\\
&\leq \displaystyle\sum_{n\geq n_{0}+1}|c_{n}|+\|h_{n_{0}}\chi_{\Omega}\|_{\mathcal{H}(p(\cdot),q,\alpha)}\\
&< \frac{\epsilon}{2}+\|h_{n_{0}}\chi_{\Omega}\|_{\mathcal{H}(p(\cdot),q,\alpha)}.
\end{align*}
	Furthermore, $h_{n_{0}}\chi_{\Omega}$ may be written as follows:
	\begin{align*}
		h_{n_{0}}\chi_{\Omega}&=\displaystyle\sum_{n= 1}^{n_{0}}c_{n}\left(St_{\rho_{n}}^{(\alpha)}f_{n}\right)\chi_{\Omega}= \displaystyle\sum_{n= 1}^{n_{0}}c_{n}St_{\rho_{n}}^{(\alpha)}\left(f_{n}\chi_{\rho_{n}^{-1}\Omega}\right)\\
		&=\displaystyle \sum_{n= 1}^{n_{0}} c_{n} \left\|f_{n}\chi_{\rho_{n}^{-1}\Omega}\right\|_{p(\cdot),q} St_{\rho_{n}}^{(\alpha)}\left(\left\|f_{n}\chi_{\rho_{n}^{-1}\Omega}\right\|_{p(\cdot),q}^{-1}f_{n}\chi_{\rho_{n}^{-1}\Omega}\right).
	\end{align*}
This implies that
\begin{align*}
\left\|h_{n_{0}}\chi_{\Omega} \right\|_{\mathcal{H}(p(\cdot),q,\alpha)} \leq \displaystyle \sum_{n= 1}^{n_{0}} c_{n} \left\|f_{n}\chi_{\rho_{n}^{-1}\Omega}\right\|_{p(\cdot),q}
\end{align*}
and therefore, we get
\begin{align}\label{eq.3.3}
		\|f\chi_{\Omega}\|_{\mathcal{H}(p(\cdot),q,\alpha)}  < \frac{\epsilon}{2}+\displaystyle \sum_{n= 1}^{n_{0}} c_{n} \left\|f_{n}\chi_{\rho_{n}^{-1}\Omega}\right\|_{p(\cdot),q}.
\end{align}
Similarly, we have
\begin{align}\label{eq.3.4}
	\|f\chi_{\mathbb{R}^{d}\backslash Q(0,r)}\|_{\mathcal{H}(p(\cdot),q,\alpha)}  < \frac{\epsilon}{2}+\displaystyle \sum_{n= 1}^{n_{0}} c_{n} \left\|f_{n}\chi_{\rho_{n}^{-1}\left(\mathbb{R}^{d}\backslash Q(0,r)\right)}\right\|_{p(\cdot),q}.
\end{align}
$\bullet$ By Lemma \ref{l3.12}, for any integer $n\geq 1,$ there are two positive real numbers $\delta_{\epsilon}^{n}$ and $R_{\epsilon}^{n}$ such that,
	\begin{align*}
	\left\{
	\begin{array}{rl}
	 &\left\|f_{n}\chi_{\Omega}\right\|_{p(\cdot),q}<\frac{\epsilon}{2n_{0}\left(|c_{n}|+1\right)} \quad \quad \quad \quad \mbox{ whenever } |\Omega|<\delta_{\epsilon}^{n}, \\
	&	\left\|f_{n}\chi_{\mathbb{R}^{d}\backslash Q(0,r)}\right\|_{p(\cdot),q} <\frac{\epsilon}{2n_{0}\left( |c_{n}|+1\right)} \quad \mbox{ whenever } R_{\epsilon}^{n} \leq r <\infty.
	\end{array}
	\right.
   \end{align*}
	Let us set  $\delta_{\epsilon} = \displaystyle \min_{1\leq n\leq n_{0}}\rho_{n}^{d}\delta_{\epsilon}^{n}$  and $R_{\epsilon}=\displaystyle \max_{1\leq n\leq n_{0}} \rho_{n}R_{\epsilon}^{n}$. Then, for any integer $n$ satisfying $1\leq n\leq n_{0}$, we have
	\begin{align*}
		\left\{
		\begin{array}{rl}
			&\left\|f_{n}\chi_{\rho_{n}^{-1} \Omega}\right\|_{p(\cdot),q}<\frac{\epsilon}{2n_{0}\left(|c_{n}|+1\right)} \quad \quad \quad \quad \;\, \mbox{ whenever } |\Omega|<\delta_{\epsilon}, \\
			&	\left\|f_{n}\chi_{\rho_{n}^{-1}\left(\mathbb{R}^{d}\backslash Q(0,r)\right)}\right\|_{p(\cdot),q} <\frac{\epsilon}{2n_{0}\left( |c_{n}|+1\right)} \quad \mbox{whenever } R_{\epsilon} \leq r <\infty.
		\end{array}
		\right.
	\end{align*}
The above inequalities combined with \eqref{eq.3.3} and \eqref{eq.3.4} yield 
\begin{align*}
		\left\{
	\begin{array}{rl}
		&\|f\chi_{\Omega}\|_{\mathcal{H}(p(\cdot),q,\alpha)}  < \epsilon \quad \quad \quad \quad \mbox{ whenever } |\Omega|<\delta_{\epsilon}, \\
		&\|f\chi_{\mathbb{R}^{d}\backslash Q(0,r)}\|_{\mathcal{H}(p(\cdot),q,\alpha)}  < \epsilon \quad \mbox{ whenever } R_{\epsilon} \leq r <\infty.
	\end{array}
	\right.
\end{align*}
The proof is complete.
\end{proof}

As a consequence of Proposition \ref{prop3.13}, we can state the main result of this subsection.

\begin{theo}\label{th3.14}
	Suppose that $p(\cdot) \in \mathcal{P}(\mathbb{R}^d)$. Then the function norm $\sigma_{p(\cdot),q,\alpha}$ is absolutely continuous.
\end{theo}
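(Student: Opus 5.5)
We will verify the criterion of Point (4)(b) of Proposition \ref{p2.6}, which is available because $\sigma_{p(\cdot),q,\alpha}$ is saturated (Proposition \ref{l3.10}). Fix $f\in\mathcal{H}(p(\cdot),q,\alpha)$ and a non-increasing sequence $(E_{n})_{n\geq 1}$ of measurable subsets of $\mathbb{R}^{d}$ with $\left|\bigcap_{n\geq 1}E_{n}\right|=0$. We must show that $\|f\chi_{E_{n}}\|_{\mathcal{H}(p(\cdot),q,\alpha)}\to 0$. The difficulty is that the $E_{n}$ need not have finite measure, so Point (1) of Proposition \ref{prop3.13} cannot be applied directly. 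We first truncate in space using Point (2), and then use Point (1) on the bounded part.

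Let $\epsilon>0$. By Point (2) of Proposition \ref{prop3.13}, choose $r\geq R_{\epsilon}$, so that $\|f\chi_{\mathbb{R}^{d}\backslash Q(0,r)}\|_{\mathcal{H}(p(\cdot),q,\alpha)}<\epsilon$. Since $|f\chi_{E_{n}}|\leq |f\chi_{E_{n}\cap Q(0,r)}|+|f\chi_{\mathbb{R}^{d}\backslash Q(0,r)}|$, the lattice property and the triangle inequality give
\begin{align*}
\|f\chi_{E_{n}}\|_{\mathcal{H}(p(\cdot),q,\alpha)}\leq \|f\chi_{E_{n}\cap Q(0,r)}\|_{\mathcal{H}(p(\cdot),q,\alpha)}+\epsilon .
\end{align*}
Set $\Omega_{n}=E_{n}\cap Q(0,r)$. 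Each $\Omega_{n}$ has measure at most $r^{d}<\infty$, and the sequence $(\Omega_{n})$ is non-increasing with null intersection. Continuity from above of Lebesgue measure on sets of finite measure then gives $|\Omega_{n}|\to 0$. Hence $|\Omega_{n}|<\delta_{\epsilon}$ for all $n$ large, where $\delta_{\epsilon}$ is provided by Point (1) of Proposition \ref{prop3.13}. For such $n$ we get $\|f\chi_{\Omega_{n}}\|_{\mathcal{H}(p(\cdot),q,\alpha)}<\epsilon$, and therefore $\|f\chi_{E_{n}}\|_{\mathcal{H}(p(\cdot),q,\alpha)}<2\epsilon$. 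Since $\epsilon$ was arbitrary, $f$ is of absolutely continuous norm, and since $f$ was arbitrary, $\sigma_{p(\cdot),q,\alpha}$ is absolutely continuous.

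The main obstacle is thus handled by the two-step splitting: the tail at infinity controls the sets of infinite measure, and continuity from above makes the bounded pieces small in measure. The one point to check is that Proposition \ref{prop3.13} is applied to a measurable $\Omega$ of finite measure, as the subsection assumes; this holds for $\Omega_{n}$. If one prefers Definition \ref{defi 2.2} directly instead of Point (4)(b), note that $f_{n}\leq |f|$ decreasing to $0$ can be reduced to sets: for $\eta>0$ put $E_{n}=\{f_{n}>\eta|f|\}$. Then $f_{n}\leq \eta|f|+|f|\chi_{E_{n}}$, so $\|f_{n}\|\leq \eta\|f\|+\|f\chi_{E_{n}}\|$, which tends to at most $\eta\|f\|$. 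This reduction is the same as the one behind the cited proposition.
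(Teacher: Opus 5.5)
Your proposal is correct and takes the same route as the paper. The paper combines both points of Proposition \ref{prop3.13} with the criterion in Point (4)(b) of Proposition \ref{p2.6}, and defers the details to \cite[Proposition 3.18]{Dia-Fof-2021}. You have written out that argument explicitly: truncate at infinity, then use continuity from above on the bounded part, with saturation from Proposition \ref{l3.10} justifying the use of Point (4)(b).
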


\begin{proof}
	Let $f$ be any element of $\mathcal{H}(p(\cdot),q,\alpha)$ and $\{ E_{n}\}_{n\geq 1}$ be a non-increasing sequence of measurable subsets of $\mathbb{R}^{d}$ satisfying $\left|\displaystyle\bigcap_{n\geq 1}E_{n}\right|=0$. Using Proposition \ref{prop3.13} and arguing as in the proof of \cite[Proposition 3.18]{Dia-Fof-2021} we get 
	$\displaystyle \lim_{n\rightarrow \infty}\|f\chi_{E_{n}}\|_{\mathcal{H}(p(\cdot),q,\alpha)} = 0.$
	Consequently, Point (4) of Proposition \ref{p2.6} asserts that $f$ is of absolutely continuous norm. This implies that
	 $\sigma_{p(\cdot),q,\alpha}$ is an absolutely continuous norm. 
\end{proof}


\subsection{The Fatou property for $\mathcal{H}(p(\cdot),q,\alpha)$} \label{Subsec3.3}

 This subsection focuses on the study of the Fatou property in $\mathcal{H}(p(\cdot),q,\alpha)$. Before, we need some preparatory results.

The following inequality holds for dilation in variable Lebesgue spaces (see \cite{Cruz-Fior-2013}).
\begin{lem} \label{lem3.10} 
Let $p(\cdot)\in \mathcal{P}(\mathbb{R}^d)$ and $\rho \in (0,\infty)$. Then, for any $f \in L^{p(\cdot)}$, we have
\begin{align} \label{eq3.3}
	\left\|f(\rho^{-1}\cdot) \right\|_{p(\cdot)} \leq  \rho^{\frac{d}{N_{p,\rho}}}\|f \|_{p(\cdot)},
\end{align}
where 
\begin{align*}
N_{p,\rho} = \left\{
\begin{array}{rl}
	&p_{+} \;\; \mbox{ if } \rho \leq 1,\\
	&p_{-} \;\; \mbox{ if } \rho > 1.
\end{array}
\right.
\end{align*}
\end{lem}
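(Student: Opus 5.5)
The natural route is through the modular $\varrho_{p(\cdot)}(g)=\int_{\mathbb{R}^d}|g(x)|^{p(x)}\,dx$ and the definition of the Luxemburg norm. Fix $f\in L^{p(\cdot)}$ with $f\neq 0$, and set $\lambda=\|f\|_{p(\cdot)}$. The unit-ball property gives $\varrho_{p(\cdot)}(f/\lambda)\leq 1$. To obtain \eqref{eq3.3} it suffices to show
\begin{align*}
\varrho_{p(\cdot)}\Big(\frac{f(\rho^{-1}\cdot)}{\mu}\Big)\leq 1 \quad\mbox{with}\quad \mu=\rho^{\frac{d}{N_{p,\rho}}}\lambda .
\end{align*}
The first step is the change of variables $x=\rho y$:
\begin{align*}
\int_{\mathbb{R}^d}\Big|\frac{f(\rho^{-1}x)}{\mu}\Big|^{p(x)}dx=\rho^{d}\int_{\mathbb{R}^d}\Big(\rho^{-\frac{d}{N_{p,\rho}}}\Big)^{p(\rho y)}\Big|\frac{f(y)}{\lambda}\Big|^{p(\rho y)}dy .
\end{align*}
Next, use $p_{-}\leq p(\rho y)\leq p_{+}$. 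If $\rho\leq 1$ we have $\rho^{-d/p_+}\geq 1$, so $\rho^{-d\,p(\rho y)/p_{+}}\leq \rho^{-d}$. If $\rho>1$ we have $\rho^{-d/p_-}<1$, so $\rho^{-d\,p(\rho y)/p_{-}}\leq\rho^{-d}$. In both cases the scalar factor cancels $\rho^{d}$. This is exactly why $N_{p,\rho}$ is chosen as stated, and it reduces the problem to showing $\int|f/\lambda|^{p(\rho y)}dy\leq 1$.

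\textbf{Main obstacle.} That last integral is a modular for the exponent $p(\rho\,\cdot)$, not for $p(\cdot)$. When $p(\rho y)=p(y)$ (constant or homogeneous-of-degree-zero exponents) it is $\leq 1$ and the argument is finished. For a general $p(\cdot)\in\mathcal{P}(\mathbb{R}^d)$ I do not see how to control $|f/\lambda|^{p(\rho y)}$ by $|f/\lambda|^{p(y)}$. On the set $\{|f|\leq\lambda\}$ this works if $p(\rho y)\geq p(y)$, and on $\{|f|>\lambda\}$ it needs the reverse inequality, so neither holds in general.

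Before writing the proof I would therefore test the statement on a two-valued exponent. Take $d=1$ and $\rho>1$. Choose $p=p_-$ on a small set $E$ and $p=p_+$ on $\rho E$, and let $f=M\chi_E$ with $|E|=M^{-p_-}$, so that $\|f\|_{p(\cdot)}=1$. Then $f(\rho^{-1}\cdot)=M\chi_{\rho E}$, and
\begin{align*}
\|f(\rho^{-1}\cdot)\|_{p(\cdot)}=\rho^{1/p_+}M^{1-p_-/p_+}\longrightarrow\infty \quad (M\to\infty).
\end{align*}
This suggests that \eqref{eq3.3} cannot hold uniformly for all such $p(\cdot)$, so I cannot give a valid proof of the lemma as worded.

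My plan is then to present the modular argument above in the form that it does prove. There are two such forms:
\begin{enumerate}
\item \eqref{eq3.3} for exponents satisfying $p(\rho\,\cdot)=p(\cdot)$;
\item in general, the bound $\|f(\rho^{-1}\cdot)\|_{p(\cdot)}\leq\rho^{d/N_{p,\rho}}\|f\|_{p(\rho\,\cdot)}$, with the norm of $f$ taken for the dilated exponent, which is what the computation literally yields.
\end{enumerate}
I would flag that any later use of Lemma \ref{lem3.10} must be checked against one of these forms rather than against \eqref{eq3.3} for an arbitrary $p(\cdot)\in\mathcal{P}(\mathbb{R}^d)$.
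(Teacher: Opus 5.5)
Your assessment is correct: \eqref{eq3.3} is false for general $p(\cdot)\in\mathcal{P}(\mathbb{R}^d)$, so declining to ``prove'' it is the right call. The paper gives no argument of its own here, only an attribution to the Cruz-Uribe--Fiorenza monograph, so there is no proof to compare with yours. Your change of variables, combined with the two exponent comparisons, is a complete proof of the inequality that does hold, namely
\begin{align*}
\left\|f(\rho^{-1}\cdot)\right\|_{p(\cdot)}\leq \rho^{\frac{d}{N_{p,\rho}}}\,\|f\|_{p(\rho\,\cdot)},
\end{align*}
which reduces to \eqref{eq3.3} when $p(\rho\,\cdot)=p(\cdot)$ a.e.

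Your computation for $f=M\chi_E$ is right. It gives $\|f\|_{p(\cdot)}=1$ and $\|f(\rho^{-1}\cdot)\|_{p(\cdot)}=\rho^{1/p_+}M^{1-p_-/p_+}$, against the claimed bound $\rho^{1/p_-}$. However, you understate the conclusion when you say the inequality cannot hold ``uniformly''. Fix $\rho>1$ and $A=[1,\rho)$, and let $p=p_-$ on $A$ and $p=p_+$ elsewhere, so that $p=p_+$ on $\rho A$. Taking $E=E_M\subset A$ shows that this single exponent violates \eqref{eq3.3} by an unbounded factor.

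There is an even stronger failure. The function $f(x)=|x-x_1|^{-a}\chi_A(x)$, with $x_1$ interior to $A$ and $1/p_+\leq a<1/p_-$, belongs to $L^{p(\cdot)}$. But $f(\rho^{-1}\cdot)=\rho^{a}|\cdot-\rho x_1|^{-a}\chi_{\rho A}\notin L^{p(\cdot)}$, so the left side of \eqref{eq3.3} is infinite. The case $\rho<1$ works the same way with $A=[1,\rho^{-1})$. Regularity of $p(\cdot)$ does not help either: if $p(\cdot)$ is continuous and $p(x_0)<p(\rho x_0)$ for some $x_0$, normalized characteristic functions of small balls around $x_0$ produce the same blow-up.

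Your closing caveat is also well placed. Lemma \ref{lem3.11} and the dyadic decomposition in Proposition \ref{p3.13} use \eqref{eq3.3} with $\|f\|_{p(\cdot)}$ on the right, and Lemma \ref{lem3.15}, Proposition \ref{p3.16} and Theorem \ref{th4.19} rest on them. Your corrected form does not substitute there. The reason is that the functions $f_n$ in an $\mathfrak{h}$-decomposition are normalized in $(L^{p(\cdot)},\ell^{q})$, not in the amalgam built on $p(\rho_n\,\cdot)$.
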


As a consequence of Lemma \ref{lem3.10} we have what follows.
\begin{lem} \label{lem3.11}
	Let $p(\cdot)\in \mathcal{P}(\mathbb{R}^d)$, $(r,\rho) \in (0,\infty)^{2}$, $\alpha \in (0,\infty]$ and $1\leq q \leq \infty$. Then, for any $f \in L^{p(\cdot)}_{\rm loc}$, we have
	\begin{align} \label{eq3.4}
		_{r} \left\|St_{\rho}^{(\alpha)}f \right\|_{p(\cdot),q} \leq  \rho^{d \left(\frac{1}{N_{p,\rho}}-\frac{1}{\alpha}\right)} {_{r\rho^{-1}}\|f\|_{p(\cdot),q}}.
	\end{align}
\end{lem}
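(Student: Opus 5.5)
The plan is to compute the dilated function cube by cube and then apply Lemma \ref{lem3.10}. Fix $k\in\mathbb{Z}^d$. By definition,
\begin{align*}
\left(St_{\rho}^{(\alpha)}f\right)\chi_{I_{k}^{r}}(x)=\rho^{-\frac{d}{\alpha}}f(\rho^{-1}x)\chi_{I_{k}^{r}}(x).
\end{align*}
The key observation is that $x\in I_{k}^{r}$ if and only if $\rho^{-1}x\in I_{k}^{r\rho^{-1}}$, since $\rho^{-1}I_k^r=\prod_j[r\rho^{-1}k_j,r\rho^{-1}(k_j+1))$. Setting $g_k:=f\chi_{I_{k}^{r\rho^{-1}}}$, this gives the pointwise identity
\begin{align*}
\left(St_{\rho}^{(\alpha)}f\right)\chi_{I_{k}^{r}}=\rho^{-\frac{d}{\alpha}}\,g_k(\rho^{-1}\cdot).
\end{align*}

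Next, note that $g_k\in L^{p(\cdot)}$, because $f\in L^{p(\cdot)}_{\rm loc}$ and $I_k^{r\rho^{-1}}$ is bounded, so it lies in a compact set. Lemma \ref{lem3.10} therefore applies to $g_k$. Using homogeneity of the Luxemburg norm, we obtain
\begin{align*}
\left\|\left(St_{\rho}^{(\alpha)}f\right)\chi_{I_{k}^{r}}\right\|_{p(\cdot)}\leq \rho^{d\left(\frac{1}{N_{p,\rho}}-\frac{1}{\alpha}\right)}\left\|f\chi_{I_{k}^{r\rho^{-1}}}\right\|_{p(\cdot)}
\end{align*}
for every $k\in\mathbb{Z}^d$. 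Here $\frac{1}{\alpha}=0$ when $\alpha=\infty$.

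Finally, the constant $\rho^{d(1/N_{p,\rho}-1/\alpha)}$ does not depend on $k$. Taking the $\ell^{q}$ norm over $k$ of both sides, using monotonicity and homogeneity of the $\ell^q$ norm (or of the supremum when $q=\infty$), yields \eqref{eq3.4}, by the definition of ${}_{s}\|\cdot\|_{p(\cdot),q}$ with $s=r\rho^{-1}$. If the right-hand side is infinite, the inequality holds trivially.

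I expect no real obstacle. The only points needing care are that the dilated cube is exactly a cube of the $r\rho^{-1}$-grid, with half-open sides preserved because $\rho>0$, and that the exponent $N_{p,\rho}$ from Lemma \ref{lem3.10} is uniform in $k$, so it can be pulled out of the $\ell^q$ sum.
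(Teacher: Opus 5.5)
Your argument is the paper's own proof, step for step. You use the identity $\chi_{I_k^r}(x)=\chi_{I_k^{r\rho^{-1}}}(\rho^{-1}x)$, apply Lemma \ref{lem3.10} to $f\chi_{I_k^{r\rho^{-1}}}$ on each cube, and then take the $\ell^q$ norm in $k$. That bookkeeping is all correct.

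The gap is the one substantive input, Lemma \ref{lem3.10}. It is false for non-constant $p(\cdot)$, and so is \eqref{eq3.4}. Substituting $x=\rho y$ gives
\[
\int_{\mathbb{R}^d}\left(\frac{|g(\rho^{-1}x)|}{\lambda}\right)^{p(x)}dx=\rho^{d}\int_{\mathbb{R}^d}\left(\frac{|g(y)|}{\lambda}\right)^{p(\rho y)}dy,
\]
so a dilation also dilates the exponent. What this computation actually yields is $\|g(\rho^{-1}\cdot)\|_{p(\cdot)}\leq\rho^{d/N_{p,\rho}}\|g\|_{p(\rho\,\cdot)}$. The norm $\|g\|_{p(\rho\,\cdot)}$ cannot be bounded by $\|g\|_{p(\cdot)}$, not even with a constant depending on $\rho$.

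Here is a counterexample. Take $d=1$ and $1<p_1<p_2<\infty$, with $p=p_1$ on $(-\infty,1)$ and $p=p_2$ on $[1,\infty)$. Let $r=\rho=4$ and $f=\chi_{[1/2,1/2+\epsilon)}$ with $0<\epsilon<1/4$.

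\begin{itemize}
\item Then $St_4^{(\alpha)}f=4^{-1/\alpha}\chi_{[2,2+4\epsilon)}$. This lies inside the single cube $I_0^4=[0,4)$ and inside $\{p=p_2\}$, so the left side of \eqref{eq3.4} equals $4^{-1/\alpha}(4\epsilon)^{1/p_2}$.
\item On the other hand, $f$ lies inside $I_0^1=[0,1)\subset\{p=p_1\}$, and $N_{p,4}=p_-=p_1$. So the right side equals $4^{1/p_1-1/\alpha}\epsilon^{1/p_1}$.
\item The quotient is $(4\epsilon)^{1/p_2-1/p_1}\to\infty$ as $\epsilon\to0$, for every $q$ and $\alpha$. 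The same $f$also violates Lemma \ref{lem3.10}.
\end{itemize}

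This is not a regularity issue. The computation is unchanged for a smooth $p$ equal to $p_1$ on $(-\infty,1]$ and to $p_2$ on $[3/2,\infty)$.

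So no proof of the statement as written is possible. Your argument, like the paper's, is valid when $p(\rho\,\cdot)=p(\cdot)$, for instance for constant $p$. In general it proves only the corrected inequality, with ${}_{r\rho^{-1}}\|f\|_{p(\rho\,\cdot),q}$ on the right-hand side.
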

\begin{proof}
	Suppose that $f \in L^{p(\cdot)}_{\rm loc}$ and $k\in \mathbb{Z}^{d}$. For any $x\in \mathbb{R}^{d}$, we have
	\begin{align*}
	\left(St_{\rho}^{(\alpha)}f\right)\chi_{I_{k}^{r}}(x) &= \rho^{-\frac{d}{\alpha}}f(\rho^{-1}x)\chi_{I_{k}^{r}}(x)= \rho^{-\frac{d}{\alpha}}f(\rho^{-1}x)\chi_{I_{k}^{r\rho^{-1}}}(\rho^{-1}x) \\
	&= \rho^{-\frac{d}{\alpha}}\left(f\chi_{I_{k}^{r\rho^{-1}}}\right)(\rho^{-1}x).
	\end{align*}
		Therefore, taking the norm $\|\cdot\|_{p(\cdot)}$ of both sides of the above equality and using \eqref{eq3.3}, we get
		\begin{align*}
			\left\|\left(St_{\rho}^{(\alpha)}f\right)\chi_{I_{k}^{r}} \right\|_{p(\cdot)} \leq  \rho^{d \left(\frac{1}{N_{p,\rho}}-\frac{1}{\alpha}\right)} \left\|f\chi_{I_{k}^{r\rho^{-1}}} \right\|_{p(\cdot)}.
		\end{align*}
		Finally, we obtain the desired inequality \eqref{eq3.4} by taking the $\ell^{q}$-norm over $k\in \mathbb{Z}^{d}$.
\end{proof}

We shall also use the following result concerning dyadic cubes in $\mathbb{R}^{d}$.

\begin{lem} \cite[Lemma 2.1]{Dia-Nag-2024}. \label{lem3.12}
	Let $(m,\rho)$ be an element of $\mathbb{Z}\times \mathbb{R}_{+}^{*}$ such that  $2^{m}\leq \rho< 2^{m+1}$. Then the number of elements of the set $\left\{ \ell \in\mathbb{Z}^{d} \;:\; I^{2^{m}}_{\ell}\cap I^{\rho}_{k}\neq\varnothing \right\}$ is at most $3^{d}$ and 
	that of the set $\left\{ k \in\mathbb{Z}^{d} \;:\; I^{2^{m}}_{\ell}\cap I^{\rho}_{k}\neq\varnothing \right\}$ is at most $2^{d}$.
\end{lem}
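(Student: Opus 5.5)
The plan is to reduce to dimension one and then count integers in an open interval. Since $I^{2^{m}}_{\ell}=\prod_{j=1}^{d}\left[2^{m}\ell_{j},2^{m}(\ell_{j}+1)\right)$ and $I^{\rho}_{k}=\prod_{j=1}^{d}\left[\rho k_{j},\rho(k_{j}+1)\right)$, the intersection $I^{2^{m}}_{\ell}\cap I^{\rho}_{k}$ is nonempty if and only if, for every $j\in\{1,\dots,d\}$, the one-dimensional intersection
\begin{align*}
\left[2^{m}\ell_{j},2^{m}(\ell_{j}+1)\right)\cap\left[\rho k_{j},\rho(k_{j}+1)\right)
\end{align*}
is nonempty. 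Hence both sets in the statement are Cartesian products of one-dimensional sets. It therefore suffices to prove the two bounds for $d=1$, namely $3$ and $2$, and then take products to obtain $3^{d}$ and $2^{d}$.

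For $d=1$, write $h=2^{m}$, so that $h\leq\rho<2h$. Two half-open intervals $[a,b)$ and $[c,e)$ intersect if and only if $a<e$ and $c<b$. Applied to our intervals, this means $I^{h}_{\ell}\cap I^{\rho}_{k}\neq\varnothing$ if and only if
\begin{align*}
h\ell<\rho(k+1) \quad\mbox{and}\quad \rho k<h(\ell+1).
\end{align*}

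First fix $k$. The condition becomes $\frac{\rho k}{h}-1<\ell<\frac{\rho(k+1)}{h}$, so $\ell$ lies in an open interval of length $\frac{\rho}{h}+1$. Since $\rho<2h$, this length is strictly less than $3$. An open interval of length less than $3$ contains at most $3$ integers, which gives the bound $3$.

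Next fix $\ell$. The condition becomes $\frac{h\ell}{\rho}-1<k<\frac{h(\ell+1)}{\rho}$, an open interval of length $\frac{h}{\rho}+1$. Since $h\leq\rho$, this length is at most $2$. An open interval of length at most $2$ contains at most $2$ integers, which gives the bound $2$.

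There is no real obstacle here. The only point needing care is the boundary case: the half-open convention is what makes the intersection conditions strict inequalities, and this is exactly what yields at most $2$ integers, rather than $3$, when $\rho=h$.
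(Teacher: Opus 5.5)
Your proof is correct and complete. Because the cubes are products of half-open intervals, the problem reduces to $d=1$. The criterion $h\ell<\rho(k+1)$, $\rho k<h(\ell+1)$ is the right nonemptiness condition. The resulting open intervals have lengths $\rho/h+1<3$ and $h/\rho+1\le 2$, so they contain at most $3$ and $2$ integers respectively: if $r$ integers lie in an open interval of length $L$, then $r-1<L$.

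The paper gives no argument of its own for this lemma; it simply quotes it from the cited earlier work, so there is no in-paper proof to compare against. Your verification is self-contained and elementary. It also uses only $h\le\rho<2h$, not that $h=2^{m}$ is a power of two.
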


Our next result concerns the dyadic decomposition of an element of $\mathcal{H}(p(\cdot),q,\alpha)$, which will play an important role in the sequel. Note that this decomposition can also be used to prove Corollary \ref{c3.9} directly.

\begin{prop} \label{p3.13}
	Let $p(\cdot)\in \mathcal{P}(\mathbb{R}^d)$. Suppose that $f \in \mathcal{H}(p(\cdot),q,\alpha)$ and  $\left\{(c_{n},\rho_{n},f_{n})\right\}_{n\geq 1}$ is any element of  $\mathfrak{h}$-$\mathcal{D}(f,p(\cdot),q,\alpha)$.
	Then $f$ admits the dyadic decomposition  $f=\displaystyle \sum_{m\in \mathbb{Z}}\lambda_{m}St_{2^{m}}^{(\alpha)}\psi_{m}$, where $\left\{(\lambda_{m},\psi_{m})\right\}_{m\in \mathbb{Z}}$ is a sequence of elements of $[0, \infty)\times (L^{p(\cdot)}, \ell^{q})$ such that 
	\begin{align*}
		\left\{
		\begin{array}{rl}
			&\displaystyle \sum_{m\in \mathbb{Z}}\lambda_{m}= 2^{\frac{d}{q'}}3^{\frac{d}{q}}\displaystyle \sum_{n\geq 1}|c_{n}|, \\
			&\|\psi_{m}\|_{p(\cdot),q} \leq 1, \quad m\in\mathbb{Z}.
		\end{array}
		\right.
	\end{align*} 
\end{prop}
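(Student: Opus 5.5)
The plan is to group the terms of the given $\mathfrak{h}$-decomposition according to the dyadic scale of $\rho_n$, and to rewrite each dilation at scale $\rho_n$ as a dilation at the dyadic scale $2^m$. Lemma \ref{lem3.11} gives the required control on the amalgam norm, and the counting Lemma \ref{lem3.12} gives the constant.

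\textbf{Step 1 (re-dilating each term).} For each $n$, let $m_n\in\mathbb{Z}$ be the integer with $2^{m_n}\leq \rho_n<2^{m_n+1}$, and set $\tau_n=\rho_n2^{-m_n}\in[1,2)$. Since dilations compose as $St_{\rho_n}^{(\alpha)}=St_{2^{m_n}}^{(\alpha)}St_{\tau_n}^{(\alpha)}$, we have
\begin{align*}
c_nSt_{\rho_n}^{(\alpha)}f_n=c_nSt_{2^{m_n}}^{(\alpha)}g_n, \qquad g_n:=St_{\tau_n}^{(\alpha)}f_n .
\end{align*}
The key estimate is $\|g_n\|_{p(\cdot),q}\leq C_0$ with $C_0=2^{d/q'}3^{d/q}$, or possibly this constant times a bounded factor coming from Lemma \ref{lem3.10}. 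To get it, I compute $\|g_n\chi_{I^1_\ell}\|_{p(\cdot)}$ by pulling back to $f_n$ on the set $\tau_n^{-1}I^1_\ell=I^{1/\tau_n}_\ell$. Each such cube meets at most $2^d$ (or $3^d$, depending on the orientation of Lemma \ref{lem3.12}) unit cubes $I^1_k$, and each $I^1_k$ meets at most a bounded number of the $I^{1/\tau_n}_\ell$. Applying Hölder over the finite overlaps then gives the bound: the $2^{d/q'}$ factor comes from Hölder in $\ell^q$ over at most $2^d$ cubes, and the $3^{d/q}$ factor from summing over $\ell$ with each $k$ counted at most $3^d$ times. This is exactly where the dyadic counting lemma is applied, after rescaling the situation to side lengths $2^{m}$ versus $\rho$.

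\textbf{Step 2 (grouping by scale).} For each $m\in\mathbb{Z}$, set $A_m=\{n: m_n=m\}$ and
\begin{align*}
\lambda_m=C_0\sum_{n\in A_m}|c_n|, \qquad \psi_m=\lambda_m^{-1}\sum_{n\in A_m}c_ng_n
\end{align*}
(with $\psi_m=0$ if $\lambda_m=0$). The triangle inequality in $(L^{p(\cdot)},\ell^q)$, together with its completeness from Proposition \ref{P2.1}, shows that the series converges and that $\|\psi_m\|_{p(\cdot),q}\leq 1$. Then $\sum_m\lambda_m=C_0\sum_n|c_n|$, which is the required identity. Finally, $\sum_m\lambda_mSt^{(\alpha)}_{2^m}\psi_m=\sum_nc_nSt^{(\alpha)}_{\rho_n}f_n=f$. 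The rearrangement is justified because the series converges absolutely in $L^\alpha$ (Proposition \ref{p3.8}), and this limit coincides with the $L^{p(\cdot)}_{\rm loc}$ limit.

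\textbf{Main obstacle.} The delicate step is Step 1: getting the exact constant $2^{d/q'}3^{d/q}$ rather than one polluted by the factor $\tau_n^{d(1/N_{p,\tau_n}-1/\alpha)}$ from Lemma \ref{lem3.11}. Since $\tau_n\geq1$ and $p_-\geq\alpha$ here (we have $p(\cdot)\geq\alpha$), the exponent $d(1/p_--1/\alpha)\leq 0$, so this factor is at most $1$. I will check this sign carefully; it is what allows the statement to hold with equality and the stated constant.
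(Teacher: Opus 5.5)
Your proof is correct relative to the paper's lemmas, and it builds exactly the paper's objects. The paper groups by $\Gamma(m)=\{n:2^{m}\le\rho_n<2^{m+1}\}$ (your $A_m$) and takes the same $\lambda_m$. It sets $\psi_m=St^{(\alpha)}_{2^{-m}}\bigl(\lambda_m^{-1}\sum_{n\in\Gamma(m)}c_nSt^{(\alpha)}_{\rho_n}f_n\bigr)$, which by the composition rule is your $\lambda_m^{-1}\sum_{n\in A_m}c_nSt^{(\alpha)}_{\tau_n}f_n$.

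What differs is the order in which $\|\psi_m\|_{p(\cdot),q}$ is estimated, and here your order is the better one. The paper stays at the original scale:
\begin{itemize}
\item it applies Lemma \ref{lem3.11} with $r=\rho=\rho_n$ to get \eqref{eq3.5};
\item it uses Lemma \ref{lem3.12} to pass from cubes of side $\rho_n$ to cubes of side $2^{m}$, giving \eqref{eq3.6};
\item it applies Lemma \ref{lem3.11} again with $\rho=2^{-m}$ to return to unit cubes.
\end{itemize}
You dilate only by $\tau_n\in[1,2)$ and count at unit scale. Your hedges resolve as you indicate: a cube of side $\tau_n^{-1}\in(1/2,1]$ meets at most $2^{d}$ unit cubes, and a unit cube meets at most $3^{d}$ such cubes. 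This is Lemma \ref{lem3.12} with $m=0$, rescaled by $\tau_n^{-1}$.

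For constant $p$ the two computations agree after a change of variables, but for variable $p(\cdot)$ they do not. In the paper's chain the two dilation factors are $2^{md(1/\alpha-1/N_{p,2^{-m}})}$ and $\rho_n^{d(1/N_{p,\rho_n}-1/\alpha)}$. For $m\ge1$ one has $N_{p,2^{-m}}=p_+$ but $N_{p,\rho_n}=p_-$, and the roles are reversed for $m\le-1$. So their product is only bounded by $2^{|m|d(1/p_--1/p_+)}$. Consequently the paper's last step ``$\le\lambda_m^{-1}\bigl(2^{d/q'}3^{d/q}\sum_{n\in\Gamma(m)}|c_n|\bigr)=1$'' does not follow when $p_-<p_+$. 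Your single use of Lemma \ref{lem3.11} with $\rho=\tau_n$ gives the factor $\tau_n^{d(1/N_{p,\tau_n}-1/\alpha)}\le1$, since $p_-\ge\alpha$, so nothing has to cancel.

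You also justify the regrouping, which the paper takes for granted. You use absolute convergence in $L^{\alpha}$ (Proposition \ref{p3.8}) together with the fact that $St^{(\alpha)}_{2^{m}}$ is an isometry of $L^{\alpha}$. This gives the dyadic identity in $L^{\alpha}$ and a.e., which is the form in which it is used in Proposition \ref{p3.16}.

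One caveat applies equally to your argument and to the paper's: both rest on Lemma \ref{lem3.10}, which is not valid as stated for a genuinely variable exponent. Take $d=1$, $p=2$ on $(-\infty,1)$, $p=4$ on $[1,\infty)$, $\rho=3/2$, and $f=\chi_{[1-\delta,1)}$ with $0<\delta<1/3$. Then $\|f(\rho^{-1}\cdot)\|_{p(\cdot)}=(3\delta/2)^{1/4}$, whereas $\rho^{1/p_-}\|f\|_{p(\cdot)}=(3/2)^{1/2}\delta^{1/2}$. The claimed inequality therefore fails for every such $\delta$, even though $\rho\in[1,2)$. So your key bound $\|St^{(\alpha)}_{\tau_n}f_n\|_{p(\cdot),q}\le 2^{d/q'}3^{d/q}$ holds only insofar as Lemma \ref{lem3.10}, through Lemma \ref{lem3.11}, does. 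Within the paper's own toolkit, your argument is complete.
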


\begin{proof}
Inequality \eqref{eq3.4} implies that, for any integer $n\geq 1$,
	$\varphi_{n}:= St_{\rho_{n}}^{(\alpha)}f_{n}$ enjoys 
	\begin{align}\label{eq3.5}
		{_{\rho_{n}}\|\varphi_{n}\|_{p(\cdot),q}} \leq \rho_{n}^{d \left(\frac{1}{N_{p,\rho}}-\frac{1}{\alpha}\right)} \|f_{n}\|_{p(\cdot),q} \leq \rho_{n}^{d \left(\frac{1}{N_{p,\rho}}-\frac{1}{\alpha}\right)}.
	\end{align}
 Set, for any $m$ in $\mathbb{Z},$
	$$ \Gamma(m)=\left\{ n \in \mathbb{N}\;:\;2^{m}\leq \rho_{n}< 2^{m+1} \right\}.$$
	Let $m\in \mathbb{Z}$ and  $n$ be in $ \Gamma(m)$. Then, for all $k \in  \mathbb{Z}^{d}$, H\"{o}lder's inequality and Lemma \ref{lem3.12} lead to
	\begin{align*}
		\left\|\varphi_{n}\chi_{I_{k}^{2^{m}}} \right\|_{p(\cdot)} 
		&\leq \, \sum_{\ell \in \mathbb{Z}^{d}}	\left\|\varphi_{n}\chi_{I_{k}^{2^{m}}\cap I^{\rho_{n}}_{\ell}} \right\|_{p(\cdot)}	\\
		&\leq 2^{\frac{d}{q'}} \left(\sum_{\ell \in \mathbb{Z}^{d}}	\|\varphi_{n}\chi_{I_{k}^{2^{m}}\cap I^{\rho_{n}}_{\ell}}\|_{p(\cdot)}^{q} \right)^{ \frac{1}{q}}.
	\end{align*}
	Therefore, we have
	\begin{align*}
		_{2^{m}}\|\varphi_{n}\|_{p(\cdot),q}
		&= \left\| \left\{\left \|\varphi_{n}\chi_{I^{2^{m}}_{k}}\right\|_{p(\cdot)} \right\}_{k\in \mathbb{Z}^d}\right\|_{\ell^{q}}
		\leq 2^{\frac{d}{q'}} \left(\sum_{k\in \mathbb{Z}^d} \left(\sum_{\ell \in \mathbb{Z}^{d}}	\|\varphi_{n}\chi_{I_{\ell}^{\rho_{n}}\cap I^{2^{m}}_{k}}\|_{p(\cdot)}^{q}\right) \right)^{\frac{1}{q}}\\
		&=2^{\frac{d}{q'}} \left(\sum_{\ell\in \mathbb{Z}^d} \left( \sum_{k \in \mathbb{Z}^{d}}	\|\varphi_{n}\chi_{I_{\ell}^{\rho_{n}}\cap I^{2^{m}}_{k}}\|_{p(\cdot)}^{q}\right) \right)^{\frac{1}{q}}\\
		&  \leq	2^{\frac{d}{q'}}  3^{\frac{d}{q}}\,\left(\sum_{\ell\in \mathbb{Z}^d}	\|\varphi_{n}\chi_{ I^{\rho_{n}}_{\ell}}\|_{p(\cdot)}^{q} \right)^{\frac{1}{q}} (\mbox{by Lemma \ref{lem3.12}}).\\
		&=2^{\frac{d}{q'}}  3^{\frac{d}{q}} \, _{\rho_{n}}\|\varphi_{n}\|_{p(\cdot),q}.
	\end{align*}
	Hence, from \eqref{eq3.5}, we obtain
	\begin{align} \label{eq3.6}
			_{2^{m}}\|\varphi_{n}\|_{p(\cdot),q} \leq 2^{\frac{d}{q'}}  3^{\frac{d}{q}} \rho_{n}^{d \left(\frac{1}{N_{p,\rho}}-\frac{1}{\alpha}\right)}.
	\end{align}
	Furthermore, we have 
	\begin{align*}
		f=\displaystyle \sum_{n\geq 1}c_{n}\varphi_{n}=\displaystyle \sum_{m\in\mathbb{Z}}\;\;\sum_{n\in \Gamma(m)}c_{n}\varphi_{n}.
	\end{align*}
	Therefore, we can write
	\begin{align*}
		f=\displaystyle \sum_{m\in\mathbb{Z}}\left( 2^{\frac{d}{q'}} 3^{\frac{d}{q}}\sum_{n\in \Gamma(m)}|c_{n}|\right)\left[ \left(  2^{\frac{d}{q'}}3^{\frac{d}{q}}\sum_{n\in \Gamma(m)}|c_{n}|\right)^{-1} \sum_{n\in \Gamma(m)}c_{n}\;\varphi_{n}\right]= \sum_{m\in\mathbb{Z}}\lambda_{m}St_{2^{m}}^{(\alpha)}\psi_{m},
	\end{align*}
	where, for any $m\in \mathbb{Z},$
	\begin{align*}
	\lambda_{m}=2^{\frac{d}{q'}} 3^{\frac{d}{q}}\displaystyle\sum_{n\in \Gamma(m)}|c_{n}| \mbox{ and } \psi_{m}=St_{2^{-m}}^{(\alpha)}\left(\lambda_{m}^{-1}\displaystyle\sum_{n\in \Gamma(m)}c_{n}\;\varphi_{n}\right).
	\end{align*}
	Obviously, from the definition, every $\lambda_{m} (m\in\mathbb{Z})$ is nonnegative and
	$$\displaystyle \sum_{m\in \mathbb{Z}}\lambda_{m}=2^{\frac{d}{q'}} 3^{\frac{d}{q}}\displaystyle \sum_{n\geq 1}|c_{n}|<\infty.$$
	Moreover, for any integer $m$, we have the following estimates by using \eqref{eq3.4} with $\rho=2^{-m}$, \eqref{eq3.6} and the definition of $\Gamma(m)$:
	\begin{align*}
		\|\psi_{m}\|_{p(\cdot),q} 
		&\leq  \lambda_{m}^{-1}\;2^{md \left(\frac{1}{\alpha}-\frac{1}{N_{p,\rho}}\right)} \quad  _{_{_{_{_{_{_{{_{_{2^{m}}}}}}}}}}}\left\|\displaystyle \sum_{n\in \Gamma(m)}c_{n}\;\varphi_{n}\right\|_{p(\cdot),q} \\
		&\leq \lambda_{m}^{-1}\;2^{md \left(\frac{1}{\alpha}-\frac{1}{N_{p,\rho}}\right)}\displaystyle\sum_{n\in \Gamma(m)}|c_{n}|\;_{_{{_{2^{m}}}}\|\varphi_{n}\|_{p(\cdot),q}}\\
		&\leq \lambda_{m}^{-1}\;2^{md \left(\frac{1}{\alpha}-\frac{1}{N_{p,\rho}}\right)}\;2^{\frac{d}{q'}}  3^{\frac{d}{q}} \displaystyle\sum_{n\in \Gamma(m)} |c_{n}|\,\rho_{n}^{d \left(\frac{1}{N_{p,\rho}}-\frac{1}{\alpha}\right)}\\
		&\leq \lambda_{m}^{-1}\left(2^{\frac{d}{q'}}  3^{\frac{d}{q}}\displaystyle\sum_{n\in \Gamma(m)}|c_{n}|\right)= 1.
	\end{align*}
	This ends the proof.
\end{proof}

We shall use the following notations.
\begin{nota} \label{not3.14}
Let	$\left(m_{0},\ell_{0}\right)$ be any element of $\mathbb{Z}\times \mathbb{Z}^{d}$. Let us fix two integers $m_{1}$ and
$m_{2}$ such that $m_{1}\leq 0$, $m_{2}\geq 0$ and $m_{1}<m_{0}<m_{2}$.
We set $\mathcal{Q}_{0}:=I_{\ell_{0}}^{2^{m_{0}}}$ ($\mathcal{Q}_{0}$ is a dyadic cube in $\mathbb{R}^{d}$) and denote 
	\begin{align*}
		&\mathcal{D}_{1}(\mathcal{Q}_{0})=\left\{(m,\ell)\in\mathbb{Z}\times \mathbb{Z}^{d} : I_{\ell}^{2^{m}}\cap \mathcal{Q}_{0}\neq\varnothing \mbox{ and } m\leq m_{1}\right\},\\
		&\mathcal{D}_{2}(\mathcal{Q}_{0})=\left\{(m,\ell)\in\mathbb{Z}\times \mathbb{Z}^{d} : I_{\ell}^{2^{m}}\cap \mathcal{Q}_{0}\neq\varnothing \mbox{ and } m_{2}\leq m \right\},\\
		&\mathcal{D}_{3}(\mathcal{Q}_{0})=\left\{(m,\ell)\in\mathbb{Z}\times \mathbb{Z}^{d} : I_{\ell}^{2^{m}}\cap \mathcal{Q}_{0}\neq\varnothing \mbox{ and } m_{1}<m<m_{2} \right\}.
	\end{align*}
\end{nota}

\begin{lem}\label{lem3.15} 
	Let $p(\cdot)\in \mathcal{P}(\mathbb{R}^d)$ and  $\left\{\left(\lambda_{m},\phi_{m}\right)\right\}_{m\in\mathbb{Z}}$ be a sequence of elements of $[0,\infty)\times L^{p(\cdot)}_{\rm loc}$. Suppose that $\displaystyle \sum_{m\in \mathbb{Z}}\lambda_{m}<\infty$  and, for any $m\in\mathbb{Z}$, ${_{2^{m}}\|\phi_{m}\|_{p(\cdot),q}}\leq 2^{md\left(\frac{1}{N_{p,2^{m}}}-\frac{1}{\alpha} \right)}$, where $N_{p,2^{m}}$ is defined as in Lemma \ref{lem3.10}. We set $A_{p}=1+\frac{1}{p_{-}} -\frac{1}{p_{+}}$. Then, we have
	\begin{align}
		&\displaystyle \sum_{(m,\ell)\in \mathcal{D}_{1}(\mathcal{Q}_{0})}\int_{\mathcal{Q}_{0}}\left| \lambda_{m} \phi_{m}(x)\chi_{I_{\ell}^{2^{m}}}(x)\right|dx \leq  A_{p}\;2^{\frac{m_{0}d}{q^{\prime}}+m_{1}d\left( \frac{1}{\alpha^{\prime}}-\frac{1}{q^{\prime}}\right)}\displaystyle\sum _{m\leq m_{1}}\lambda_{m},  \label{eq3.7} \\
		& \displaystyle \sum_{(m,\ell)\in \mathcal{D}_{2}(\mathcal{Q}_{0})}\int_{\mathcal{Q}_{0}}\left| \lambda_{m} \phi_{m}(x)\chi_{I_{\ell}^{2^{m}}}(x)\right|dx \leq    A_{p}\;2^{\frac{m_{0}d}{\left(N_{p,2^{m_{0}}}\right)^{\prime}}+m_{2}d\left(\frac{1}{p_{-}}-\frac{1}{\alpha} \right)} \displaystyle\sum _{m\geq m_{2}} \lambda_{m}, \label{eq3.8} \\ 
		& \displaystyle \sum_{(m,\ell)\in \mathcal{D}_{3}(\mathcal{Q}_{0})}\int_{\mathcal{Q}_{0}}\left| \lambda_{m} \phi_{m}(x)\chi_{I_{\ell}^{2^{m}}}(x)\right|dx \leq  A_{p} \, 2^{\frac{m_{2}d}{\alpha^{\prime}}}\displaystyle\sum _{m_{1}<m< m_{2}}\lambda_{m}. \label{eq3.9}
	\end{align} 
\end{lem}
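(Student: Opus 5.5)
The plan is to estimate each term separately and uniformly. For a fixed pair $(m,\ell)$ we apply Hölder's inequality \eqref{eq2.4} on the set $\mathcal{Q}_{0}\cap I_{\ell}^{2^{m}}$:
\begin{align*}
\int_{\mathcal{Q}_{0}}\left|\lambda_{m}\phi_{m}\chi_{I_{\ell}^{2^{m}}}\right|dx\leq A_{p}\,\lambda_{m}\left\|\phi_{m}\chi_{I_{\ell}^{2^{m}}}\right\|_{p(\cdot)}\left\|\chi_{\mathcal{Q}_{0}\cap I_{\ell}^{2^{m}}}\right\|_{p'(\cdot)}.
\end{align*}
We then bound $\|\chi_{\mathcal{Q}_{0}\cap I_{\ell}^{2^{m}}}\|_{p'(\cdot)}$ by \eqref{eq2.6}, applied to $p'(\cdot)$, with measure $2^{d\min(m,m_{0})}$. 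The remaining work is to sum over $\ell$ using the $\ell^{q}$ control ${_{2^{m}}\|\phi_{m}\|_{p(\cdot),q}}\leq 2^{md(1/N_{p,2^{m}}-1/\alpha)}$, and then to sum over $m$ in each range.

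\textbf{The range $\mathcal{D}_{1}$ ($m\leq m_{1}\leq 0$, so $m<m_{0}$).} The cubes $I_{\ell}^{2^{m}}$ meeting $\mathcal{Q}_{0}$ are contained in it, and there are $2^{(m_{0}-m)d}$ of them. Since $|I_{\ell}^{2^{m}}|\leq 1$, \eqref{eq2.6} gives $\|\chi_{I_{\ell}^{2^{m}}}\|_{p'(\cdot)}\leq 2^{md/(p')_{+}}=2^{md/p_{-}'}$. By Hölder in $\ell$ over these $2^{(m_{0}-m)d}$ indices we get $\sum_{\ell}\|\phi_{m}\chi_{I_{\ell}^{2^{m}}}\|_{p(\cdot)}\leq 2^{(m_{0}-m)d/q'}\,2^{md(1/p_{+}-1/\alpha)}$, using $N_{p,2^{m}}=p_{+}$ for $m\leq 0$. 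Collecting exponents gives
\begin{align*}
\frac{m_{0}d}{q'}+md\left(-\frac{1}{q'}+\frac{1}{p_{+}}-\frac{1}{\alpha}+1-\frac{1}{p_{-}}\right).
\end{align*}
Since $1/p_{+}-1/p_{-}\leq 0$, for $m\leq 0$ this is at most $m_{0}d/q'+md(1/\alpha'-1/q')$. Because $q\leq\alpha$ means $1/\alpha'-1/q'\geq 0$, this quantity is maximized at $m=m_{1}$, which yields \eqref{eq3.7}.

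\textbf{The range $\mathcal{D}_{2}$ ($m\geq m_{2}\geq 0$, with $m>m_{0}$).} Only one cube $I_{\ell}^{2^{m}}$ meets $\mathcal{Q}_{0}$, since dyadic cubes are nested. Its $L^{p(\cdot)}$ piece is bounded by the $\ell^{q}$ norm, namely $2^{md(1/p_{-}-1/\alpha)}$, as $N=p_{-}$ here. The other factor is $\|\chi_{\mathcal{Q}_{0}}\|_{p'(\cdot)}\leq 2^{m_{0}d/(N_{p,2^{m_{0}}})'}$; this is the appropriate reading of \eqref{eq2.6}, taking the max according to whether $2^{m_{0}d}\lessgtr 1$. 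Since $p_{-}\leq\alpha$, the exponent $1/p_{-}-1/\alpha\geq 0$, and the statement's factor $2^{m_{2}d(1/p_{-}-1/\alpha)}$ must be read accordingly. Here I expect the main obstacle: the bound is increasing in $m$, so one cannot simply pass to $m=m_{2}$. I would first check whether the intended exponent has the opposite sign, i.e. whether after combining one gets $m d(1/p_{-}-1/\alpha)\le m_2 d(\dots)$ with a nonpositive coefficient. If not, I would accept the statement with the prefactor taken at the relevant scale and verify it against its later use (the Fatou argument, where $m_{2}\to\infty$ together with $\sum_{m\geq m_{2}}\lambda_{m}\to 0$).

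\textbf{The range $\mathcal{D}_{3}$.} This is a finite range. For each $m$ there are at most $\max(1,2^{(m_{0}-m)d})$ relevant $\ell$. Combining crude bounds, with $|\mathcal{Q}_{0}\cap I_{\ell}^{2^{m}}|\leq 2^{md}$ and the same Hölder-in-$\ell$ step, gives each term at most $A_{p}2^{m_{2}d/\alpha'}\lambda_{m}$, because all exponents are dominated when $m<m_{2}$ and $m_{0}<m_{2}$. Summing over $m_{1}<m<m_{2}$ then gives \eqref{eq3.9}.
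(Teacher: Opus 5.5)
Your strategy is the paper's: Hölder \eqref{eq2.4} on each cube, \eqref{eq2.6} for $\|\chi\|_{p'(\cdot)}$, Hölder in $\ell$, then summing in $m$. For a constant exponent it proves all three bounds. For variable $p(\cdot)$, however, the proposal has a genuine gap, and it is not a slip that can be patched.

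\textbf{The range $\mathcal{D}_1$.} You correctly get $\|\chi_{I_\ell^{2^m}}\|_{p'(\cdot)}\le 2^{md/(p_-)'}$ for $m\le 0$, since $(p')_+=(p_-)'$. This gives a level-$m$ bound $A_p\lambda_m 2^{m_0d/q'+mdc}$ with $c=\frac1{\alpha'}-\frac1{q'}-(\frac1{p_-}-\frac1{p_+})$. You then claim this exponent is at most $\frac{m_0d}{q'}+md(\frac1{\alpha'}-\frac1{q'})$. But $m\le 0$, so $md(\frac1{p_+}-\frac1{p_-})\ge 0$, and the inequality goes the other way. Also, ``maximized at $m=m_1$'' needs $c\ge 0$, which can fail.

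This cannot be repaired, because your estimate is sharp and \eqref{eq3.7} itself fails for nonconstant exponents. Take $d=1$, $1\le q\le\alpha\le a<b<\infty$ with $a>1$, $p=a$ on $[0,1)$ and $p=b$ elsewhere, and $\mathcal{Q}_0=[0,1)$ (so $m_0=0$). Put $\lambda_{m_1}=1$, $\phi_{m_1}=2^{m_1(\frac1b-\frac1a+\frac1q-\frac1\alpha)}\chi_{[0,1)}$, and $\lambda_m=0$, $\phi_m=0$ for $m\neq m_1$. The hypothesis holds with equality at $m=m_1$. The left side of \eqref{eq3.7} equals $2^{m_1(\frac1b-\frac1a+\frac1q-\frac1\alpha)}$, while the right side equals $A_p2^{m_1(\frac1q-\frac1\alpha)}$. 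Their ratio is $2^{|m_1|(\frac1a-\frac1b)}/A_p$, which tends to $\infty$ as $m_1\to-\infty$.

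The paper reaches \eqref{eq3.7} only by writing $\|\chi_{I_\ell^{2^m}}\|_{p'(\cdot)}\le 2^{md/(N_{p,2^m})'}$, i.e.\ $2^{md/(p_+)'}$ for $m\le 0$. That makes the exponents collapse to $md/\alpha'$, but it takes the wrong end of \eqref{eq2.6}, so the same defect sits in the paper's proof. What your computation does prove is \eqref{eq3.7} with $\frac1{\alpha'}-\frac1{q'}$ replaced by $c$, provided $c\ge 0$.

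\textbf{The range $\mathcal{D}_2$.} The obstacle you anticipate comes from a misreading. The standing assumption of this section is $1\le q\le\alpha\le p(\cdot)$, so $p_-\ge\alpha$ and $\frac1{p_-}-\frac1\alpha\le 0$. Hence $2^{md(\frac1{p_-}-\frac1\alpha)}\le 2^{m_2d(\frac1{p_-}-\frac1\alpha)}$ for $m\ge m_2$, exactly as in the paper. As written, though, you leave this case open.

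Your factor $\|\chi_{\mathcal{Q}_0}\|_{p'(\cdot)}\le 2^{m_0d/(N_{p,2^{m_0}})'}$ is the same inversion of \eqref{eq2.6} that you avoided in $\mathcal{D}_1$: for $m_0\le 0$ one only gets $2^{m_0d/(p_-)'}$. This affects only an $m_0$-dependent constant, which is harmless in the later use with $m_0$ fixed. Still, \eqref{eq3.8} as literally stated inherits it and fails for very negative $m_0$.

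\textbf{The range $\mathcal{D}_3$.} Your claim that ``all exponents are dominated'' is asserted, not checked, and it fails for the same reason. The levels $m<\min(0,m_0)$ carry the extra factor $2^{|m|d(\frac1{p_-}-\frac1{p_+})}$ relative to the constant-exponent computation. Placing the mass of the example above at such a level violates \eqref{eq3.9} once $m_1$ is very negative.

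For constant $p$ your count is right. It is in fact more careful than the paper's chain for \eqref{eq3.9}, which drops the factor $2^{(m_0-m)d/q'}$ when it replaces $\sum_\ell\|\phi_m\chi_{I_\ell^{2^m}}\|_{p(\cdot)}$ by ${}_{2^m}\|\phi_m\|_{p(\cdot),q}$. Keeping that factor gives $2^{m_0d/q'+md(\frac1{\alpha'}-\frac1{q'})}\le 2^{m_0d/\alpha'}\le 2^{m_2d/\alpha'}$ for $m<m_0$.
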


\begin{proof}
	$\bullet$ We first establish \eqref{eq3.7}.
	\begin{align*}
		&\displaystyle \sum_{(m,\ell)\in \mathcal{D}_{1}(\mathcal{Q}_{0})}\int_{\mathcal{Q}_{0}}\left| \lambda_{m} \phi_{m}(x)\chi_{I_{\ell}^{2^{m}}}(x)\right|dx 
		=\displaystyle\sum _{m\leq m_{1}}\lambda_{m}\sum_{I_{\ell}^{2^{m}}\subset \mathcal{Q}_{0}}\int_{I_{\ell}^{2^{m}}}\left| \phi_{m}(x)\right|dx\\
		& \leq \displaystyle\sum _{m\leq m_{1}}\lambda_{m}\sum_{I_{\ell}^{2^{m}}\subset \mathcal{Q}_{0}} A_{p} \,\|\chi_{I_{\ell}^{2^{m}}}\|_{p^{\prime}(\cdot)} \;\|\phi_{m}\chi_{I_{\ell}^{2^{m}}}\|_{p(\cdot)} \quad \mbox{ (by \eqref{eq2.4})}\\
		& \leq A_{p} \displaystyle\sum _{m\leq m_{1}}\lambda_{m}\; 2^{\frac{md}{(N_{p,2^{m}})^{\prime}}}\sum_{I_{l}^{2^{m}}\subset \mathcal{Q}_{0}}\|\phi_{m}\chi_{I_{\ell}^{2^{m}}}\|_{p(\cdot)}
		\leq  A_{p} \displaystyle \sum _{m\leq m_{1}}\lambda_{m}\;2^{\frac{md}{(N_{p,2^{m}})^{\prime}}} 2^{\frac{\left(m_{0}-m\right)d}{q^{\prime}}} {_{2^{m}}\|\phi_{m}\|_{p(\cdot),q}}\\ 
		&\leq A_{p} \displaystyle \sum _{m\leq m_{1}}\lambda_{m}\;2^{\frac{md}{(N_{p,2^{m}})^{\prime}}} 2^{\frac{\left(m_{0}-m\right)d}{q^{\prime}}} 2^{md\left(\frac{1}{N_{p,2^{m}}}-\frac{1}{\alpha} \right)}
		\leq  A_{p} \; 2^{\frac{m_{0}d}{q^{\prime}}+m_{1}d\left( \frac{1}{\alpha'}-\frac{1}{q^{\prime}}\right)}\displaystyle\sum _{m\leq m_{1}}\lambda_{m}.
	\end{align*}
    $\bullet$ Next, we prove \eqref{eq3.8}. We have
	\begin{align*}
		\displaystyle \sum_{(m,\ell)\in \mathcal{D}_{2}(\mathcal{Q}_{0})}\int_{\mathcal{Q}_{0}}\left| \lambda_{m} \phi_{m}(x)\chi_{I_{\ell}^{2^{m}}}(x)\right|dx = \displaystyle\sum _{m\geq m_{2}}\lambda_{m}\sum_{I_{\ell}^{2^{m}}\subset \mathcal{Q}_{0}}\int_{I_{\ell}^{2^{m}}}\left| \phi_{m}(x)\right|dx.
	\end{align*}
Note that, for any integer $m\geq m_{0}$, there is one and only one element $\ell_{m}$ of $\mathbb{Z}^{d}$ such that $ I_{\ell_{m}}^{2^{m}}\cap \mathcal{Q}_{0} \neq \varnothing$. Moreover $\mathcal{Q}_{0}$ is included in $ I_{\ell_{m}}^{2^{m}}$. Therefore
	\begin{align*}
		& \displaystyle \sum_{(m,\ell)\in \mathcal{D}_{2}(\mathcal{Q}_{0})}\int_{\mathcal{Q}_{0}}\left| \lambda_{m} \phi_{m}(x)\chi_{I_{\ell}^{2^{m}}}(x)\right|dx 
		=\displaystyle\sum _{m\geq m_{2}} \lambda_{m} \int_{\mathcal{Q}_{0}}\left| \phi_{m}(x)\chi_{I_{\ell_{m}}^{2^{m}}}(x)\right|dx\\
		&\leq \displaystyle\sum _{m\geq m_{2}} \lambda_{m} \, A_{p} \,\|\chi_{\mathcal{Q}_{0}}\|_{p^{\prime}(\cdot)} \;\|\phi_{m}\chi_{I_{\ell_{m}}^{2^{m}}}\|_{p(\cdot)}
		\leq  A_{p} \,2^{\frac{m_{0}d}{\left(N_{p,2^{m_{0}}}\right)^{\prime}}}\displaystyle\sum _{m\geq m_{2}}\lambda_{m}\;{_{2^{m}}\|\phi_{m}\|_{p(\cdot),q}}\\
		&\leq  A_{p} \,2^{\frac{m_{0}d}{\left(N_{p,2^{m_{0}}}\right)^{\prime}}} \displaystyle\sum _{m\geq m_{2}} \lambda_{m}\;2^{md\left(\frac{1}{N_{p,2^{m}}}-\frac{1}{\alpha} \right)}  
	 \leq  A_{p} \,2^{\frac{m_{0}d}{\left(N_{p,2^{m_{0}}}\right)^{\prime}}} \displaystyle\sum _{m\geq m_{2}} \lambda_{m}\;2^{m_{2}d\left(\frac{1}{p_{-}}-\frac{1}{\alpha} \right)} \\
	 &\leq  A_{p}\;2^{\frac{m_{0}d}{\left(N_{p,2^{m_{0}}}\right)^{\prime}}+m_{2}d\left(\frac{1}{p_{-}}-\frac{1}{\alpha} \right)} \displaystyle\sum _{m\geq m_{2}} \lambda_{m}.
	\end{align*}
	$\bullet$ Finally, we establish \eqref{eq3.9}.
	\begin{align*}
		& \displaystyle \sum_{(m,\ell)\in \mathcal{D}_{3}(\mathcal{Q}_{0})}\int_{\mathcal{Q}_{0}}\left| \lambda_{m} \phi_{m}(x)\chi_{I_{\ell}^{2^{m}}}(x)\right|dx
		\leq \displaystyle \sum_{(m,\ell)\in \mathcal{D}_{3}} \lambda_{m} \, A_{p} \,\|\chi_{\mathcal{Q}_{0}\cap I_{\ell}^{2^{m}} }\|_{p^{\prime}(\cdot)} \;\|\phi_{m}\chi_{I_{\ell}^{2^{m}}}\|_{p(\cdot)}\\
		&\leq A_{p} \displaystyle \sum_{m_{1}<m<m_{2}}\lambda_{m}\; 2^{\frac{md}{(N_{p,2^{m}})^{\prime}}} {_{2^{m}}\|\phi_{m}\|_{p(\cdot),q}}
		\leq A_{p} \displaystyle \sum_{m_{1}<m<m_{2}}\lambda_{m}\; 2^{\frac{md}{(N_{p,2^{m}})^{\prime}}}  2^{md\left(\frac{1}{N_{p,2^{m}}}-\frac{1}{\alpha} \right)} \\
		&\leq A_{p} \displaystyle \sum_{m_{1}<m<m_{2}} \lambda_{m} \,2^{\frac{md}{\alpha^{\prime}}} \leq A_{p} \,2^{\frac{m_{2}d}{\alpha^{\prime}}} \displaystyle \sum_{m_{1}<m<m_{2}} \lambda_{m}.
	\end{align*}
	The proof is complete.
\end{proof}

We next establish the following proposition, which will play an important role in the proof of the main result of this subsection.

\begin{prop} \label{p3.16}
	Let $p(\cdot)\in \mathcal{P}(\mathbb{R}^d)$, $1\leq q <\alpha < p(\cdot)$ and
	$\left(f_{n}\right)_{n\geq 1}$ be a non-decreasing  sequence of nonnegative elements of $\mathcal{H}(p(\cdot),q,\alpha)$.  Then $f:=\displaystyle \sup_{n\geq 1}f_{n}$ belongs to $\mathcal{H}(p(\cdot),q,\alpha)$. 
\end{prop}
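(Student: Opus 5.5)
Throughout I read the statement together with the natural boundedness hypothesis $M:=\sup_{n\geq 1}\|f_{n}\|_{\mathcal{H}(p(\cdot),q,\alpha)}<\infty$. Without it the claim fails, for instance for $f_{n}=n\,g$ with $g\neq 0$.

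\textbf{Step 1: uniform dyadic decompositions.} For each $n$, choose an $\mathfrak{h}$-decomposition of $f_{n}$ with total coefficient mass at most $M+1$. Proposition \ref{p3.13} then yields a dyadic decomposition
\begin{align*}
f_{n}=\sum_{m\in\mathbb{Z}}\lambda^{(n)}_{m}\phi^{(n)}_{m},\qquad \phi^{(n)}_{m}:=St_{2^{m}}^{(\alpha)}\psi^{(n)}_{m},\qquad \sum_{m}\lambda^{(n)}_{m}\leq C_{0}(M+1),
\end{align*}
where $C_{0}=2^{d/q'}3^{d/q}$ and $\|\psi^{(n)}_{m}\|_{p(\cdot),q}\leq 1$. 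By Lemma \ref{lem3.11}, each $\phi^{(n)}_{m}$ satisfies ${_{2^{m}}\|\phi^{(n)}_{m}\|_{p(\cdot),q}}\leq 2^{md(1/N_{p,2^{m}}-1/\alpha)}$, which is exactly the hypothesis of Lemma \ref{lem3.15}.

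\textbf{Step 2: passing to limits.} By a diagonal argument I extract a subsequence, still indexed by $n$, along which three things hold:
\begin{enumerate}
\item $\lambda^{(n)}_{m}\to\lambda_{m}$ for every $m$;
\item for every $m$ and every dyadic cube $I^{2^{m}}_{\ell}$, $\phi^{(n)}_{m}\chi_{I^{2^{m}}_{\ell}}$ converges weakly in $L^{p(\cdot)}(I^{2^{m}}_{\ell})$ to some $\phi_{m}\chi_{I^{2^{m}}_{\ell}}$;
\item each $\phi_{m}$ obeys the same bound ${_{2^{m}}\|\phi_{m}\|_{p(\cdot),q}}\leq 2^{md(1/N_{p,2^{m}}-1/\alpha)}$.
\end{enumerate}
Point (2) is available because $L^{p(\cdot)}$ on a cube is reflexive for $p(\cdot)\in\mathcal{P}(\mathbb{R}^d)$ and the norms are bounded, and only countably many pairs $(m,\ell)$ are involved. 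Point (3) follows from weak lower semicontinuity of $\|\cdot\|_{p(\cdot)}$ applied to finite sums over $\ell$, then letting the finite set grow. Fatou's lemma for series gives $\sum_{m}\lambda_{m}\leq C_{0}(M+1)$.

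\textbf{Step 3: identifying the limit (main obstacle).} Fix a dyadic cube $\mathcal{Q}_{0}$ and $g\in L^{\infty}$ supported in $\mathcal{Q}_{0}$. I aim to show
\begin{align*}
\int f_{n}g\to\int hg,\qquad h:=\sum_{m}\lambda_{m}\phi_{m}.
\end{align*}
I split the $m$-sum into the three ranges of Notation \ref{not3.14} and apply Lemma \ref{lem3.15}, which is the crux.
\begin{itemize}
\item On $\mathcal{D}_{1}(\mathcal{Q}_{0})$ the bound carries the factor $2^{m_{1}d(1/\alpha'-1/q')}$. Since $q<\alpha$ gives $1/\alpha'-1/q'>0$, this factor tends to $0$ as $m_{1}\to-\infty$.
\item On $\mathcal{D}_{2}(\mathcal{Q}_{0})$ the bound carries the factor $2^{m_{2}d(1/p_{-}-1/\alpha)}$. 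Since $\alpha<p_{-}$, this factor tends to $0$ as $m_{2}\to\infty$.
\end{itemize}
Because the masses $\sum_{m}\lambda^{(n)}_{m}$ are bounded uniformly in $n$, both tails are small uniformly in $n$, and also for the limit sequence $(\lambda_{m},\phi_{m})$; this is where the strict inequalities in the hypotheses are used. On the finite middle range $\mathcal{D}_{3}(\mathcal{Q}_{0})$ only finitely many $m$ occur, and for each the cube $\mathcal{Q}_{0}$ meets only finitely many $I^{2^{m}}_{\ell}$. There $\lambda^{(n)}_{m}\int\phi^{(n)}_{m}g\to\lambda_{m}\int\phi_{m}g$ by Step 2, since $g\chi_{I^{2^{m}}_{\ell}}\in L^{p'(\cdot)}$.

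The same estimates (\ref{eq3.7})--(\ref{eq3.9}), applied to $(\lambda_{m},\phi_{m})$, show that $\sum_{m}\lambda_{m}\phi_{m}$ converges absolutely in $L^{1}(\mathcal{Q}_{0})$. This convergence can be upgraded to $L^{p(\cdot)}_{\rm loc}$ via the $\ell^{q}$ bounds on the finitely many relevant cubes.

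\textbf{Step 4: conclusion.} For $g\geq 0$, monotone convergence gives $\int f_{n}g\uparrow\int fg$. By Step 3 this limit equals $\int hg<\infty$, so $f=h$ a.e. on each $\mathcal{Q}_{0}$, hence on $\mathbb{R}^{d}$. Setting $\tilde\psi_{m}=St^{(\alpha)}_{2^{-m}}\phi_{m}$, Lemma \ref{lem3.11} gives $\|\tilde\psi_{m}\|_{p(\cdot),q}\leq 1$ (the same computation as at the end of Proposition \ref{p3.13}). Then $\{(\lambda_{m},2^{m},\tilde\psi_{m})\}_{m}$ is an $\mathfrak{h}$-decomposition of $f$ with $\sum_{m}\lambda_{m}\leq C_{0}(M+1)<\infty$, so $f\in\mathcal{H}(p(\cdot),q,\alpha)$.
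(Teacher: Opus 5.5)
Your route is the paper's: dyadic regrouping by Proposition~\ref{p3.13}, diagonal extraction of weak limits on cubes, the three-range split of Lemma~\ref{lem3.15} (with $q<\alpha$ and $\alpha<p_-$ making both tails small uniformly in $n$), and identification of the limit. Your identification step is a tidy substitute for the paper's detour through $L^{\alpha}$ and the Lebesgue differentiation theorem: you test against bounded $g$ supported in $\mathcal{Q}_0$ and use monotone convergence for $\int f_n g$. You are also right that norm-boundedness must be assumed; the paper uses it tacitly in ``without loss of generality $\|f_n\|\leq 1$''.

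\textbf{The gap is the normalization in Step~4.} You take weak limits of the \emph{dilated} functions $\phi^{(n)}_m=St^{(\alpha)}_{2^m}\psi^{(n)}_m$ and then undo the dilation. But Lemma~\ref{lem3.11} (with $r=1$, $\rho=2^{-m}$), combined with ${}_{2^m}\|\phi_m\|_{p(\cdot),q}\leq 2^{md(1/N_{p,2^m}-1/\alpha)}$, only yields
\[
\left\|St^{(\alpha)}_{2^{-m}}\phi_m\right\|_{p(\cdot),q}\leq 2^{md\left(\frac{1}{N_{p,2^{m}}}-\frac{1}{N_{p,2^{-m}}}\right)}=2^{|m|d\left(\frac{1}{p_{-}}-\frac{1}{p_{+}}\right)}.
\]
The equality holds because for $m\neq 0$ the exponents $N_{p,2^{m}}$ and $N_{p,2^{-m}}$ are $p_-$ and $p_+$ in some order. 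This bound is unbounded in $m$ unless $p(\cdot)$ is constant. After renormalizing the atoms, the coefficients need not be summable, so you do not get an $\mathfrak{h}$-decomposition. The closing computation of Proposition~\ref{p3.13} cannot be transplanted here either, since it too only closes when the two exponents agree.

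The fix is the paper's ordering:
\begin{enumerate}
\item Extract weak limits of the \emph{undilated} pieces $\psi^{(n)}_m\chi_{I^{1}_{k}}$ in $L^{p(\cdot)}$ and glue them into $\psi_m$.
\item Get $\|\psi_m\|_{p(\cdot),q}\leq 1$ directly from your lower semicontinuity argument.
\item Set $\phi_m:=St^{(\alpha)}_{2^m}\psi_m$; it meets the hypothesis of Lemma~\ref{lem3.15} after a single application of Lemma~\ref{lem3.11}.
\item The middle range still converges, because $\int\phi^{(n)}_m g\,\chi_{I^{2^m}_{\ell}}=2^{md/\alpha'}\int\psi^{(n)}_m(y)\,g(2^{m}y)\,\chi_{I^{1}_{\ell}}(y)\,dy$ and $g(2^{m}\cdot)\chi_{I^{1}_{\ell}}$ is bounded with bounded support.
\end{enumerate}

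\textbf{Separately, delete the claim that the $L^{1}(\mathcal{Q}_0)$ convergence ``can be upgraded to $L^{p(\cdot)}_{\rm loc}$''.} Infinitely many small scales meet $\mathcal{Q}_0$, and for $m<0$ one has $\|St^{(\alpha)}_{2^{m}}\chi_{[0,1)^d}\|_{p(\cdot)}\geq 2^{|m|d(1/\alpha-1/p_-)}\to\infty$. Concretely, $f_N=\sum_{n=1}^{N}n^{-2}St^{(\alpha)}_{2^{-n}}\chi_{[0,1)^d}$ is increasing with $\|f_N\|_{\mathcal{H}(p(\cdot),q,\alpha)}\leq \pi^2/6$, yet its supremum is not even in $L^{p_-}_{\rm loc}$. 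So the $\mathfrak{h}$-decomposition of $f$ can only be asserted in $L^{1}_{\rm loc}$, or in $L^{\alpha}$, the sense behind Proposition~\ref{p3.8}. The paper's proof is silent on the mode of convergence; your absolute $L^{1}(\mathcal{Q}_0)$ convergence is the statement to keep.
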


	 \begin{proof}
	 	$\bullet$ Without loss of generality, we may assume that, for any $n\geq 1$,  $\| f_{n}\|_{\mathcal{H}(p(\cdot),q,\alpha)}\leq 1$.
		 From Proposition \ref{p3.13}, every $f_{n}(n\geq 1)$ can be decomposed as follows:
		\begin{align*}
		f_{n}=\displaystyle \sum_{m\in \mathbb{Z}}\lambda^{n}_{m}St_{2^{m}}^{(\alpha)}\psi^{n}_{m},
		\end{align*}
		where $ \left\{\left( \lambda_{m}^{n},\psi_{m}^{n}\right) \right\}_{m\in\mathbb{Z}}$ is a sequence of elements of $[0,\infty)\times (L^{p(\cdot)}, \ell^{q})$  such that
		\begin{align*}
		\displaystyle \sum_{m\in \mathbb{Z}}\lambda^{n}_{m}\leq 2^{\frac{d}{q^{\prime}}+1}\,3^{\frac{d}{q}}
		\end{align*}
		and $\|\psi^{n}_{m}\|_{p(\cdot),q}\leq 1$ for all $m \in \mathbb{Z}$.
		  Since, for all $n\geq 1$, $f_{n}$ is nonnegative, $\psi_{m}^{n}$  also  may be choosen nonnegative.
		Note that, for any $(m,\ell)$ in $\mathbb{Z}\times\mathbb{Z}^{d},$ 
		$\left\{\lambda_{m}^{n} : n\geq 1\right\}$ and 	$\left\{\psi_{m}^{n}\chi_{I_{\ell}^{2^{m}}} : n\geq 1\right\}$ 
		are bounded subsets of $[0,\infty)$ and $L^{p(\cdot)}$, respectively.
      Therefore, using a diagonalization argument, we can construct a subsequence $\left(f_{n_{j}}\right)_{j\geq 1}$ of $\left(f_{n}\right)_{n\geq 1}$ such that, for all integers $j\geq 1$, $f_{n_{j}}=\displaystyle \sum_{m\in \mathbb{Z}}\lambda^{n_{j}}_{m}St_{2^{m}}^{(\alpha)} \psi^{n_{j}}_{m}$, $\left(\lambda_{m}^{n_{j}}\right)_{j\geq 1}$ converges in $[0,\infty)$ to $\lambda_{m}$ and  $\left(\psi_{m}^{n_{j}}\chi_{I_{\ell}^{2^{m}}}  \right)_{j\geq 1}$ converges weakly in $L^{p(\cdot)}$ to $\psi_{m,\ell}$.
 	Let us set 
 	\begin{align*}
 		 \psi_{m}=\displaystyle \sum_{\ell \in \mathbb{Z}^{d}}\psi_{m,\ell}, \mbox{ for all } m\in \mathbb{Z}, \mbox{ and } \phi=\displaystyle \sum_{m\in \mathbb{Z}}\lambda_{m}St_{2^{m}}^{(\alpha)}\psi_{m}.
 	\end{align*}
		We have
		\begin{align*}
			 \phi=\lambda_{0}St_{1}^{(\alpha)}\psi_{0} + \displaystyle \sum_{m \geq 1} \left(\lambda_{m}St_{2^{m}}^{(\alpha)}\psi_{m} + \lambda_{-m}St_{2^{-m}}^{(\alpha)}\psi_{-m}\right).
		\end{align*}
		Then,  $\phi$ may be written
		\begin{align*}
			\phi=\displaystyle \sum_{k\geq 0 }c_{k}St_{\rho_{k}}^{(\alpha)}h_{k},
		\end{align*}
		with, for all $m\geq 1$,
		\begin{align*}
			\left( c_{k},\rho_{k},h_{k}\right)= \left\{
			\begin{array}{rl}
				&\left( \lambda_{0},1,\psi_{0}\right) \quad \quad \quad \; \mbox{ if } k=0,\\
				&\left( \lambda_{m},2^{m},\psi_{m}\right) \quad \quad \mbox{ if } k=2m,\\
				&\left( \lambda_{-m},2^{-m},\psi_{-m}\right) \; \mbox{ if } k=2m-1.           
			\end{array}
			\right.
		\end{align*}
		We observe that, for all $k\geq 0$, we have $\rho_{k}>0$ and $\|h_{k}\|_{p(\cdot),q} \leq 1$. Moreover, thanks to the Fatou lemma, we get
	\begin{align*}
		\displaystyle \sum_{k \geq 0}|c_{k}|=\displaystyle \sum_{k\geq 0}
		c_{k}=\displaystyle \sum_{m\in \mathbb{Z}}\lambda_{m}\leq\liminf_{j\rightarrow \infty} \sum_{m\in \mathbb{Z}}\lambda_{m}^{n_{j}}\leq 2^{\frac{d}{q^{\prime}}+1}\,3^{\frac{d}{q}}<\infty.
	\end{align*}
		Hence $\phi$ belongs to $\mathcal{H}(p(\cdot),q,\alpha).$\\
		$\bullet$ Now we shall verify that  $f$ and $\phi$ represent the same element of $L^{p(\cdot)}_{\rm loc}$  and therefore we  will conclude that $f$ belongs to $\mathcal{H}(p(\cdot),q,\alpha)$.
		
		Let us fix  $(\epsilon,j)$ in $(0,\infty)\times \mathbb{N}$ and let $\mathcal{Q}_{0}$ be any dyadic cube in $\mathbb{R}^{d}$. For all $m \in \mathbb{Z}$, we set
		\begin{align*}
			 \varphi_{m}^{n_{j}}=St_{2^{m}}^{(\alpha)}\psi_{m}^{n_{j}} \mbox{ and } \varphi_{m}=St_{2^{m}}^{(\alpha)}\psi_{m}.
		\end{align*}
		Using the decompositions of $f_{n_{j}}$ and $\phi$, the above notations and Notation \ref{not3.14}, we have
		\begin{align}
			\left| \int_{\mathcal{Q}_{0}}f_{n_{j}}(x)dx-\int_{\mathcal{Q}_{0}} \phi(x)dx \right| 
			=& \left| \int_{\mathcal{Q}_{0}} \displaystyle \sum_{m\in \mathbb{Z}}\left( \sum_{\ell \in \mathbb{Z}^{d}} \left[ \lambda_{m}^{n_{j}} \varphi_{m}^{n_{j}}(x)- \lambda_{m} \varphi_{m}(x)\right] \chi_{I_{\ell}^{2^{m}}}(x)\right)dx \right|  \nonumber \\
			\leq & \int_{\mathcal{Q}_{0}}\displaystyle \sum_{(m,\ell)\in \mathcal{D}_{1}(\mathcal{Q}_{0})}  \left|  \lambda_{m}^{n_{j}} \varphi_{m}^{n_{j}}(x)- \lambda_{m} \varphi_{m}(x) \right| \chi_{I_{\ell}^{2^{m}}}(x)dx   \nonumber \\
			&+  \int_{\mathcal{Q}_{0}}\displaystyle \sum_{(m,\ell)\in \mathcal{D}_{2}(\mathcal{Q}_{0})}  \left|  \lambda_{m}^{n_{j}} \varphi_{m}^{n_{j}}(x)- \lambda_{m} \varphi_{m}(x) \right| \chi_{I_{\ell}^{2^{m}}}(x)dx   \nonumber \\  
			&+  \int_{\mathcal{Q}_{0}}\displaystyle \sum_{(m,\ell)\in \mathcal{D}_{3}(\mathcal{Q}_{0})}  \left|  \lambda_{m}^{n_{j}} \varphi_{m}^{n_{j}}(x)- \lambda_{m} \varphi_{m}(x) \right| \chi_{I_{\ell}^{2^{m}}}(x)dx   \nonumber\\  
			=:& \;\Theta_{1}^{j}+\Theta_{2}^{j}+\Theta_{3}^{j}. \label{eq3.10}
		\end{align}
	By virtue of Lemma \ref{lem3.15}, we obtain
		\begin{align}
			\Theta_{1}^{j} &\leq  A_{p}\;2^{\frac{m_{0}d}{q^{\prime}}+m_{1}d\left( \frac{1}{\alpha^{\prime}}-\frac{1}{q^{\prime}}\right)}\left(\displaystyle\sum _{m\leq m_{1}}\lambda_{m}^{n_{j}}+\displaystyle \sum _{m\leq m_{1}}\lambda_{m}\right) \nonumber\\
			&\leq A_{p} \; 2^{\frac{d}{q^{\prime}}+2} \; 3^{\frac{d}{q}}\;  2^{\frac{m_{0}d}{q^{\prime}}+m_{1}d\left( \frac{1}{\alpha^{\prime}}-\frac{1}{q^{\prime}}\right)}, \label{eq3.11}
		\end{align} 
			\begin{align}
				\Theta_{2}^{j}  \leq A_{p} \; 2^{\frac{d}{q^{\prime}}+2} \; 3^{\frac{d}{q}}\;2^{\frac{m_{0}d}{\left(N_{p,2^{m_{0}}}\right)^{\prime}}+m_{2}d\left(\frac{1}{p_{-}}-\frac{1}{\alpha} \right)} \label{eq3.12}
		\end{align} 
		and 
		\begin{align}
			\Theta_{3}^{j} 
			\leq & \, A_{p} \; 2^{\frac{m_{2}d}{\alpha^{\prime}}}\displaystyle \sum_{m_{1}<m<m_{2}}\left|\lambda^{n_{j}}_{m}-\lambda_{m} \right| \nonumber \\
			 & + 2^{\frac{d}{q^{\prime}}+1} \; 3^{\frac{d}{q}} 
			  \displaystyle \sup_{(m,\ell)\in \mathcal{D}_{3}(\mathcal{Q}_{0})}\left| \int_{\mathcal{Q}_{0}}\left[\varphi^{n_{j}}_{m}(x)-\varphi_{m}(x) \right]\chi_{I_{\ell}^{2^{m}}}(x)dx\right|. \label{eq3.13}
		\end{align}
	 Note that the set $\mathcal{D}_{3}(\mathcal{Q}_{0})$  has a finite number of elements. Therefore,
		by \eqref{eq3.11} and \eqref{eq3.12}, we may first choose $m_{1}:=m_{1}(\epsilon)<0$ and $m_{2}:=m_{2}(\epsilon)>0$  such that 
		\begin{align}
			\Theta_{1}^{j}+\Theta_{2}^{j}<\frac{2\epsilon}{3} \label{eq3.14}
		\end{align}
		and after, use \eqref{eq3.13} to choose $j_{\epsilon}$ in $\mathbb{N}$ such that, for all $j\geq j_{\epsilon}$,
		\begin{align}
			\Theta_{3}^{j}<\frac{\epsilon}{3} \label{eq3.15}.
		\end{align}
		A combination of \eqref{eq3.10}, \eqref{eq3.14} and \eqref{eq3.15} leads to
		\begin{align}
			\displaystyle \lim_{j\rightarrow \infty} \left| \int_{\mathcal{Q}_{0}}f_{n_{j}}(x)dx-\int_{\mathcal{Q}_{0}} \phi(x)dx \right|=0. \label{eq3.16}
		\end{align}
	 
		Furthermore, since  $\mathcal{H}(p(\cdot),q,\alpha)$ is continuously embedded in $L^{\alpha}$ (see Proposition \ref{p3.8}), $\left(f_{n_{j}}\right)_{j\geq 1}$ is a norm bounded and non-decreasing sequence of nonnegative elements of $L^{\alpha}$. This implies that $f:=	\displaystyle \sup_{j\geq 1} f_{n_{j}}$ belongs to $L^{\alpha}$ and 
		$\displaystyle\lim_{j\rightarrow \infty}\|f_{n_{j}}-f\|_{\alpha}=0$. Consequently, for any measurable subset $\Omega$ of $\mathbb{R}^{d}$ satisfying $|\Omega|<\infty$, we have
		\begin{align} 
			\displaystyle \lim_{j\rightarrow \infty} \left|\int_{\Omega} f_{n_{j}}(x)dx-\int_{\Omega} f(x)dx\right| =0. \label{eq3.17}
		\end{align}
		
		Since $f$ and $\phi$ are in $L^{\alpha}$, an application of
		the Lebesgue differentiation theorem, combined with \eqref{eq3.16} and \eqref{eq3.17}, show that $f=\phi$ almost everywhere. Thus $f$ belongs to  $\mathcal{H}(p(\cdot),q,\alpha)$.
\end{proof}

Our main result in this subsection is the following.

\begin{theo}\label{th4.19}
		Let $p(\cdot)\in \mathcal{P}(\mathbb{R}^d)$ and $1\leq q <\alpha < p(\cdot)$. Then the space $\mathcal{H}(p(\cdot),q,\alpha)$ satisfies the Fatou property.
\end{theo}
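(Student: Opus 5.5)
We must show: if $0\leq f_{n}\uparrow f$ in $L^{0}$ with $f_{n}\in\mathcal{H}(p(\cdot),q,\alpha)$ and $M:=\sup_{n}\|f_{n}\|_{\mathcal{H}(p(\cdot),q,\alpha)}<\infty$, then $f\in\mathcal{H}(p(\cdot),q,\alpha)$ and $\|f_{n}\|_{\mathcal{H}(p(\cdot),q,\alpha)}\to\|f\|_{\mathcal{H}(p(\cdot),q,\alpha)}$. Membership of $f$ is exactly Proposition \ref{p3.16}. By the lattice property (Point (2) of Proposition \ref{p2.12}) the sequence $\|f_{n}\|$ is non-decreasing and bounded by $\|f\|$, so $\lim_{n}\|f_{n}\|\leq\|f\|$. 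The whole task is therefore the reverse inequality $\|f\|\leq M$.

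For the reverse inequality, I will use duality rather than redo the decomposition argument. The point is that the dyadic-decomposition proof of Proposition \ref{p3.16} loses the constant $2^{d/q'+1}3^{d/q}$, so it cannot give the sharp norm. By Proposition \ref{p2.11}, applied with the conjugate exponents (here $p'(\cdot)\leq\alpha'\leq q'$ and $p'(\cdot)\in\mathcal{P}(\mathbb{R}^d)$), the space $(L^{p'(\cdot)},\ell^{q'})^{\alpha'}$ is isometrically the dual of $\mathcal{H}(p(\cdot),q,\alpha)$ via $g\mapsto T_{g}$. Hence
\begin{align*}
\|h\|_{\mathcal{H}(p(\cdot),q,\alpha)}=\sup\Big\{\Big|\int h g\Big| : \|g\|_{p'(\cdot),q',\alpha'}\leq1\Big\}.
\end{align*}
By solidity of $(L^{p'(\cdot)},\ell^{q'})^{\alpha'}$ (Point (4) of Proposition \ref{p2.9}), replacing $g$ by $|g|$ keeps the norm bounded by $1$. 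Therefore, for $h\geq0$, the supremum may be taken over $g\geq0$ with $\int hg$ in place of $|\int hg|$. Equivalently, $\mathcal{H}(p(\cdot),q,\alpha)$ is isometrically its Köthe bidual restricted to itself: its norm equals $\sup\{\int h g\}$ over the unit ball of the Köthe dual.

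Now fix $g\geq0$ with $\|g\|_{p'(\cdot),q',\alpha'}\leq1$. By the monotone convergence theorem, $\int fg=\lim_{n}\int f_{n}g\leq\sup_{n}\|f_{n}\|=M$. Taking the supremum over such $g$ and using the formula above with $h=f$ gives $\|f\|\leq M$, which is legitimate since $f\in\mathcal{H}(p(\cdot),q,\alpha)$ by Proposition \ref{p3.16}. Combining the two inequalities yields $\lim_{n}\|f_{n}\|=\|f\|$, which is the Fatou property.

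The main obstacle is justifying the norming formula: that Proposition \ref{p2.11} really gives an isometric identification, so that the $\mathcal{H}$-norm of an element is recovered by testing against $(L^{p'(\cdot)},\ell^{q'})^{\alpha'}$. The Hahn–Banach theorem provides a norming functional $\Lambda$ in $\mathcal{H}(p(\cdot),q,\alpha)^{*}$, and I need $\Lambda$ to be of the form $T_{g}$. If Proposition \ref{p2.11} is only read as an isometric embedding into the dual, I will argue differently. Since $\sigma_{p(\cdot),q,\alpha}$ is absolutely continuous (Theorem \ref{th3.14}) and saturated, Point (4)(c) of Proposition \ref{p2.6} gives that the topological dual equals the Köthe dual. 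Thus every functional is integral, and the norming formula follows.
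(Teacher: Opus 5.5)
Your argument is correct. It shares the paper's first step, where membership of $f$ comes from Proposition~\ref{p3.16}, but it gets the norm identity by a different route.

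\textbf{The paper's route.} The paper finishes in one line from Theorem~\ref{th3.14}. Since $f\in\mathcal{H}(p(\cdot),q,\alpha)$ has absolutely continuous norm, and $0\leq f_{n}\leq f$ with $f_{n}\to f$ a.e., Point (4)(a) of Proposition~\ref{p2.6} gives $\|f-f_{n}\|_{\mathcal{H}(p(\cdot),q,\alpha)}\to 0$. This is stronger than convergence of the norms.

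\textbf{Your main route.} You use the predual theorem, Proposition~\ref{p2.11} with conjugate exponents, read as an onto isometry. The paper itself reads it that way in Proposition~\ref{p5.1}. This shows that on $\mathcal{H}(p(\cdot),q,\alpha)$ the norm coincides with the norm of its K\"{o}the bidual. That bidual has the Fatou property by monotone convergence, since it is a K\"{o}the dual (Point (2) of Proposition~\ref{p2.6}). This bypasses absolute continuity entirely, but it depends on the surjectivity part of an imported result.

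\textbf{Your fallback.} Using saturation and absolute continuity to get $X^{*}=X'$, then Hahn--Banach, is also valid. However, it is a detour: once you invoke Theorem~\ref{th3.14}, Point (4)(a) of Proposition~\ref{p2.6} closes the proof directly, with no duality needed.

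Your remark that the dyadic decomposition loses a constant is exactly why both proofs separate membership from the sharp norm identity.
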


\begin{proof}
	Let $(f_{n})_{n\geq 1}$ be a norm bounded non-decreasing  sequence of nonnegative elements of $\mathcal{H}(p(\cdot),q,\alpha)$ and $f:=\displaystyle \sup_{n\geq 1}f_{n}$.
	From Proposition \ref{p3.16}, $f$ belongs to $\mathcal{H}(p(\cdot),q,\alpha)$. Moreover, since   $\mathcal{H}(p(\cdot),q,\alpha)$ has an absolutely continuous norm (see  Theorem \ref{th3.14}), Point (4) of Proposition \ref{p2.6} implies that  $\displaystyle \lim_{n\rightarrow \infty}\|f_{n}\|_{\mathcal{H}(p(\cdot),q,\alpha)}=\|f\|_{\mathcal{H}(p(\cdot),q,\alpha)}$. This complete the  proof. 
\end{proof}


\subsection{Separability of $\mathcal{H}(p(\cdot),q,\alpha)$} \label{Subsec4.4}

In this subsection we are interested in the separability of the space $\mathcal{H}(p(\cdot),q,\alpha)$. To this end, we need the following result, which established the  separability of the amalgam space $(L^{p(\cdot)},\ell^{q})$.

\begin{prop} \label{p4.20}
	Suppose that $p(\cdot) \in \mathcal{P}(\mathbb{R}^d)$ and $1\leq q < \infty$.	Then the space $(L^{p(\cdot)},\ell^{q})$ is separable.
\end{prop}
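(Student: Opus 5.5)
We will exhibit an explicit countable dense subset of $(L^{p(\cdot)},\ell^{q})$. By Point (3) of Proposition \ref{P2.1}, since $p(\cdot)\in\mathcal{P}(\mathbb{R}^d)$ and $1\leq q<\infty$, the space $\mathcal{C}_{\rm c}$ is dense in $(L^{p(\cdot)},\ell^{q})$. It therefore suffices to approximate each $g\in\mathcal{C}_{\rm c}$ by elements of a countable family. As that family we take the set $\mathcal{S}$ of finite linear combinations, with coefficients in $\mathbb{Q}+i\mathbb{Q}$, of characteristic functions $\chi_{R}$, where $R$ ranges over half-open rectangles $\prod_{j=1}^{d}[a_j,b_j)$ with rational endpoints. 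The set $\mathcal{S}$ is countable, and each $\chi_R$ lies in $(L^{p(\cdot)},\ell^{q})$ by \eqref{eq2.6}, since $R$ meets only finitely many cubes $I_k^1$.

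Fix $g\in\mathcal{C}_{\rm c}$ and $\epsilon>0$, and choose $N\geq1$ with $\operatorname{supp} g\subset Q_N:=[-N,N)^d$. The first key step is to reduce the amalgam norm to an $L^{p_+}$ norm on this cube. If $h$ is supported in $Q_N$, then only the $(2N)^d$ cubes $I_k^1\subset Q_N$ contribute. Hence
\begin{align*}
\|h\|_{p(\cdot),q}\leq (2N)^{d/q}\,\max_{I_k^1\subset Q_N}\|h\chi_{I_k^1}\|_{p(\cdot)}\leq (2N)^{d/q}\,C_2\,\|h\chi_{Q_N}\|_{p_+},
\end{align*}
where the last inequality uses \eqref{eq2.5} with $\Omega=Q_N$. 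It therefore suffices to find $s\in\mathcal{S}$, supported in $Q_N$, with $\|g-s\|_{p_+}$ small.

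The second key step produces that $s$. Since $g$ is uniformly continuous, we partition $Q_N$ into dyadic subcubes of side $2^{-j}$, which are rational rectangles. On each subcube $g$ oscillates by less than $\eta$ once $j$ is large. Replacing $g$ on each subcube by a rational complex value within $\eta$ of $g$ there gives $s\in\mathcal{S}$ with $|g-s|\leq 2\eta$ on $Q_N$ and $s=0$ off $Q_N$. Then $\|g-s\|_{p_+}\leq 2\eta\,(2N)^{d/p_+}$, and we choose $\eta$ small enough that the bound of the previous step is less than $\epsilon$.

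Combining the density of $\mathcal{C}_{\rm c}$ with the triangle inequality shows that $\mathcal{S}$ is dense, so $(L^{p(\cdot)},\ell^{q})$ is separable. The main point requiring care is the localization estimate in the first key step. It relies on $q<\infty$ only through the density of $\mathcal{C}_{\rm c}$, and on $p_+<\infty$ through \eqref{eq2.5}; both hypotheses are exactly those in the statement.
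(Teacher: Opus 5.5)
Your proof is correct and follows essentially the same route as the paper: density of $\mathcal{C}_{\rm c}$, uniform continuity of $g$ to approximate it on small dyadic cubes by step functions with coefficients in $\mathbb{Q}+i\mathbb{Q}$, and domination of the local $L^{p(\cdot)}$ norms by $L^{p_+}$ norms. The differences are minor: the paper uses only dyadic cubes and sums over the unit cubes meeting $\operatorname{supp} g$, whereas you use rational rectangles and a single maximum over $Q_N$. Your explicit constant $C_2$ from \eqref{eq2.5} is also slightly more careful than the paper, which bounds $\|\cdot\|_{p(\cdot)}$ by $\|\cdot\|_{p_+}$ on unit cubes with no constant; that constant is needed in general but is harmless.
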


\begin{proof}
	We set $\mathfrak{D}= \displaystyle \bigcup_{n\geq 0} \mathfrak{D}_{n}$, where for all nonnegative integers $n$, 
	\begin{align*}
		\mathfrak{D}_{n}=\left\{\sum_{k \in K} c_{k}\chi_{I_{k}^{2^{-n}}} : K \mbox{ is a finite subset of } \mathbb{Z}^{d} \mbox{ and } c_{k} \in \mathbb{Q}+i\mathbb{Q}, \forall \; k\in K\right\}.
	\end{align*}
	We remark that $\mathfrak{D}$ is countable and included in $(L^{p(\cdot)},\ell^{q})$.
	
	Let us consider an element $f$ of $(L^{p(\cdot)},\ell^{q})$ and a real number $\epsilon > 0$. Since the set $\mathcal{C}_{\rm{c}}$ of all continuous and compactly supported functions on $\mathbb{R}^{d}$ is a dense subspace of $\left(L^{p(\cdot)},\ell^{q}\right)$ (see Proposition \ref{P2.1}), there exists $g\in \mathcal{C}_{\rm{c}}$ such that 
	\begin{align} \label{eq4.18}
		\|f-g\|_{p(\cdot),q} < \frac{\epsilon}{2}.
	\end{align}  
	Observe that the set 
	\begin{align*}
		\mathbf{K}=\left\{k\in \mathbb{Z}^{d} : I_{k}^{1} \cap \mbox{supp}(g) \neq\varnothing\right\}
	\end{align*}
	has a finite number of elements that we denote by $N$. Since  the function $g$ is uniformly continuous, there exists a real number $\delta>0$ such that
	\begin{align*}
		\forall \, x,y \in \mathbb{R}^{d}, \quad  \|x-y\|<\delta \Longrightarrow |g(x)-g(y)|<\frac{\epsilon}{4 N},
	\end{align*}
	where $\|\cdot\|$ denotes the usual Euclidean norm on $\mathbb{R}^{d}$.
	
	Let $n$ be a fixed nonnegative integer satisfying $2^{-n} \sqrt{d} < \delta$ and set
	\begin{align*}
		\mathbf{K}(n)=\left\{ \ell \in \mathbb{Z}^{d} : I_{\ell}^{2^{-n}} \cap \mbox{supp}(g) \neq\varnothing\right\}.
	\end{align*}
	Then, for each $\ell \in \mathbf{K}(n)$, we can choose $c_{\ell} \in \mathbb{Q}+i\mathbb{Q}$ such that $ |g(2^{-n}\ell)-c_{\ell}|<\frac{\epsilon}{4 N}$ and therefore
	\begin{align*}
		\forall \, x \in I_{k}^{2^{-n}}, \quad |g(x)-c_{\ell}| \leq |g(x)-g(2^{-n}\ell)|+  |g(2^{-n}\ell)-c_{\ell}|<\frac{\epsilon}{2 N}.
	\end{align*}
	Let us set $h= \displaystyle \sum_{\ell \in \mathbf{K}(n)} c_{\ell}\chi_{I_{\ell}^{2^{-n}}}$. Note that $h$ belongs to $\mathfrak{D}$ and for all $x\in \mathbb{R}^{d}\backslash \left(\displaystyle  \bigcup_{k\in \mathbf{K}}I_{k}^{1}\right)$, we have $g(x)=0=h(x)$. Hence, we get
	\begin{align}
		\|g-h\|_{p(\cdot),q} & =\left[ \displaystyle \sum_{k \in \mathbf{K}} 	\|(g-h)\chi_{I_{k}^{1}} \|_{p(\cdot)}^{q} \right]^{\frac{1}{q}} \leq \left[ \displaystyle \sum_{k \in \mathbf{K}} 	\|(g-h)\chi_{I_{k}^{1}} \|_{p_{+}}^{q} \right]^{\frac{1}{q}} \nonumber \\
		& \leq \left[ \displaystyle \sum_{k \in \mathbf{K}} \left( \displaystyle \sum_{\ell \in \mathbf{K}(n);I_{\ell}^{2^{-n}} \subset I_{k}^{1}} \int_{I_{\ell}^{2^{-n}}} |g(x)-c_{\ell}|^{p_{+}} dx \right)^{\frac{q}{p_{+}}} \right]^{\frac{1}{q}} \nonumber\\
		& < \left[ \displaystyle \sum_{k \in \mathbf{K}} \left( |I_{k}^{1}| \left(\frac{\epsilon}{2 N}\right)^{p_{+}}  \right)^{\frac{q}{p_{+}}} \right]^{\frac{1}{q}} = N^{\frac{1}{q}-1} \times \frac{\epsilon}{2} \leq  \frac{\epsilon}{2} \label{eq4.19}.
	\end{align}
	From \eqref{eq4.18} and \eqref{eq4.19}, we obtain
	\begin{align*}
		\|f-h\|_{p(\cdot),q} < \epsilon.
	\end{align*}
	Hence $\mathfrak{D}$ is dense in $(L^{p(\cdot)},\ell^{q})$ and consequently $(L^{p(\cdot)},\ell^{q})$ is separable.
\end{proof}

Proposition \ref{p4.20} and the density of $(L^{p(\cdot)},\ell^{q})$ in  $\mathcal{H}(p(\cdot),q,\alpha)$ (see Proposition \ref{p2.12}) provide what follows.

\begin{theo} \label{th4.21}
	Let $p(\cdot) \in \mathcal{P}(\mathbb{R}^d)$. Then the space $\mathcal{H}(p(\cdot),q,\alpha)$ is separable.
\end{theo}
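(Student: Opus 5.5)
The plan is the standard argument that a dense separable subspace whose inclusion is continuous transfers separability. Throughout we assume $1\leq q\leq\alpha\leq p(\cdot)$ and $p(\cdot)\in\mathcal{P}(\mathbb{R}^d)$.

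First, I will note that $q<\infty$ in this setting. Indeed $q\leq\alpha\leq p(\cdot)\leq p_{+}<\infty$, so Proposition \ref{p4.20} applies. It yields a countable set $\mathfrak{D}$ that is dense in $(L^{p(\cdot)},\ell^{q})$ for the norm $\|\cdot\|_{p(\cdot),q}$.

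Second, I will check that the inclusion $(L^{p(\cdot)},\ell^{q})\hookrightarrow\mathcal{H}(p(\cdot),q,\alpha)$ is norm-decreasing. Take $f\in(L^{p(\cdot)},\ell^{q})$ with $f\neq 0$. The single-term sequence $\{(\|f\|_{p(\cdot),q},1,f/\|f\|_{p(\cdot),q})\}$, completed by zero terms, is an $\mathfrak{h}$-decomposition of $f$. This uses $St^{(\alpha)}_{1}=\mathrm{Id}$. It follows that
\begin{align*}
\|f\|_{\mathcal{H}(p(\cdot),q,\alpha)}\leq\|f\|_{p(\cdot),q}.
\end{align*}
This is the same observation already used in the proof of Proposition \ref{l3.10}.

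Third, I will run the density argument. Let $g\in\mathcal{H}(p(\cdot),q,\alpha)$ and $\epsilon>0$. By Point (1) of Proposition \ref{p2.12}, there is $f\in(L^{p(\cdot)},\ell^{q})$ with $\|g-f\|_{\mathcal{H}(p(\cdot),q,\alpha)}<\epsilon/2$. Next, choose $h\in\mathfrak{D}$ with $\|f-h\|_{p(\cdot),q}<\epsilon/2$. The triangle inequality together with the second step then gives
\begin{align*}
\|g-h\|_{\mathcal{H}(p(\cdot),q,\alpha)}<\epsilon.
\end{align*}
Hence $\mathfrak{D}$ is a countable dense subset of $\mathcal{H}(p(\cdot),q,\alpha)$, and the space is separable.

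None of these steps is a real obstacle. The only point needing care is the second step. There one must confirm that the norm on $\mathcal{H}(p(\cdot),q,\alpha)$ is the infimum over $\mathfrak{h}$-decompositions, so that the trivial decomposition gives the bound. One must also confirm the hypothesis $q<\infty$ required by Proposition \ref{p4.20}.
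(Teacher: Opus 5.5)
Your proof is correct and follows the paper's route: the paper deduces Theorem \ref{th4.21} directly from the separability of $(L^{p(\cdot)},\ell^{q})$ (Proposition \ref{p4.20}) and its density in $\mathcal{H}(p(\cdot),q,\alpha)$ (Proposition \ref{p2.12}). You also make explicit two points the paper leaves implicit: that $q\leq\alpha\leq p_{-}\leq p_{+}<\infty$ forces $q<\infty$, and that the inclusion is norm-decreasing via the one-term $\mathfrak{h}$-decomposition with $\rho=1$.
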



\section{Duality results} \label{Sec5}

Under suitable conditions on exponents,  we shall apply the results obtained in the previous sections to determine the K\"{o}the duals and K\"{o}the biduals of $\left(L^{p(\cdot)},\ell^{q}\right)^{\alpha}$ and $\mathcal{H}(p^{\prime}(\cdot),q^{\prime},\alpha^{\prime})$, and a predual space of $\mathcal{H}(p^{\prime}(\cdot),q^{\prime},\alpha^{\prime})$.

In view of Theorem \ref{th3.1}, the Fofana space $\left(L^{p(\cdot)},\ell^{q}\right)^{\alpha}$ satisfies the Fatou property. Therefore Point (3) of Propostion \ref{p2.6} leads to the following corollary.
\begin{coro}\label{cor5.1}
	Let $p(\cdot)$ be a variable exponent on $\mathbb{R}^d$ and  $ p(\cdot) \leq \alpha \leq q \leq \infty$. Then the  space $\left(L^{p(\cdot)},\ell^{q}\right)^{\alpha}$ coincides with its K\"{o}the bidual.  
\end{coro}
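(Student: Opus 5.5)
The statement follows directly from results already in hand, and I would prove it by combining them. By Theorem \ref{th3.1}, $\left(L^{p(\cdot)},\ell^{q}\right)^{\alpha}$ is a Banach function space on $\mathbb{R}^d$ with the Fatou property whenever $p(\cdot)\leq \alpha\leq q\leq\infty$. The one point to check is that we are in the setting of Proposition \ref{p2.6}: the space must be a normed K\"{o}the space generated by a function norm. For that I take $\sigma(f)=\|f\|_{p(\cdot),q,\alpha}$ for $f\in L^{0}_{+}$, with the convention $\sigma(f)=\infty$ when $f\notin L^{p(\cdot)}_{\rm loc}$ or the supremum diverges.

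Next I verify that $\sigma$ is a function norm. Homogeneity and the triangle inequality come from the corresponding properties of $\|\cdot\|_{p(\cdot)}$ and $\ell^{q}$, together with the linearity of $St^{(\alpha)}_{\rho}$, after taking the supremum over $\rho$. The lattice property is Point (4) of Proposition \ref{p2.9}. Strict positivity holds because $\|f\|_{p(\cdot),q,\alpha}\geq \|f\|_{p(\cdot),q}$ (take $\rho=1$), and the latter vanishes only if $f=0$ a.e. Since $\|g\|_{p(\cdot),q,\alpha}=\||g|\|_{p(\cdot),q,\alpha}$, the space $X^{\sigma}=\{g\in L^{0}:\sigma(|g|)<\infty\}$ is exactly $\left(L^{p(\cdot)},\ell^{q}\right)^{\alpha}$ with the same norm.

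With this in place, Point (3) of Proposition \ref{p2.6} says that $X''=X$ exactly when $X$ has the Fatou property. That property is given by Theorem \ref{th3.1}, so $\left(L^{p(\cdot)},\ell^{q}\right)^{\alpha}$ coincides with its K\"{o}the bidual. The norms agree as well, since the equality in Proposition \ref{p2.6} (Lorentz--Luxemburg) is isometric.

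The only real obstacle is bookkeeping: Zaanen's bidual theorem is usually stated for saturated function norms, or one needs $X'$ to be large enough. If saturation is required, I would check it as in Proposition \ref{l3.10}. For a bounded $F\subset E$ with $|F|>0$, Proposition \ref{p3.2} gives $\|\chi_F\|_{p(\cdot),q,\alpha}\leq 2\|\chi_F\|_{\alpha}<\infty$ when $p(\cdot)\in\mathcal{P}(\mathbb{R}^d)$ and $\alpha<\infty$. In the general case I would instead use the defining supremum directly on small cubes. This step is routine, and everything else is immediate.
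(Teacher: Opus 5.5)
Your proposal is correct and follows the paper's route exactly: the paper proves this by combining the Fatou property from Theorem \ref{th3.1} with Point (3) of Proposition \ref{p2.6} (the Lorentz--Luxemburg theorem). Your extra checks that $\|\cdot\|_{p(\cdot),q,\alpha}$ is a saturated function norm are bookkeeping the paper takes for granted. One small caveat: for a general exponent, read the lattice property off the definition directly, since Proposition \ref{p2.9} is stated only for $p(\cdot)\in\mathcal{P}(\mathbb{R}^d)$.
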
 

Likewise, Theorem \ref{th4.19} ensures that the  space $\mathcal{H}(p(\cdot),q,\alpha)$ satisfies the Fatou property. Hence, applying Point (3) of Propostion \ref{p2.6}, we obtain the following.
\begin{coro}\label{cor5.2}
	Suppose that $p(\cdot)\in \mathcal{P}(\mathbb{R}^d)$ and $1\leq q <\alpha < p(\cdot)$. Then the space $\mathcal{H}(p(\cdot),q,\alpha)$  coincides with its K\"{o}the bidual.
\end{coro}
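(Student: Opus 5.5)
This follows directly from results already established, so I would not reprove anything from scratch. The plan is to verify the two hypotheses of Point (3) of Proposition \ref{p2.6}, namely that $\mathcal{H}(p(\cdot),q,\alpha)$ is a normed K\"{o}the space and that it has the Fatou property, and then conclude $X''=X$ with $X=\mathcal{H}(p(\cdot),q,\alpha)$.

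\textbf{Step 1 (K\"{o}the structure).} Since $p(\cdot)\in\mathcal{P}(\mathbb{R}^d)$ and $1\leq q<\alpha<p(\cdot)$, the standing assumptions of Section \ref{Sec4} hold. The discussion preceding Proposition \ref{P4.1} shows that $\sigma_{p(\cdot),q,\alpha}$ is a function norm: strict positivity, homogeneity and the triangle inequality come from the definition, and the lattice property comes from Point (2) of Proposition \ref{p2.12}. The Corollary following Proposition \ref{P4.1} then identifies $\mathcal{H}(p(\cdot),q,\alpha)$ as the normed K\"{o}the space defined by $\sigma_{p(\cdot),q,\alpha}$, with $\|f\|_{\mathcal{H}(p(\cdot),q,\alpha)}=\sigma_{p(\cdot),q,\alpha}(|f|)$. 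I would also record that it is Banach (Proposition \ref{p3.8}, Corollary \ref{c3.9}) and that $\sigma_{p(\cdot),q,\alpha}$ is saturated (Proposition \ref{l3.10}). These last two facts are not needed for Point (3), but they confirm that the K\"{o}the dual $X'$ is a genuine Banach function space on all of $\mathbb{R}^d$, so that $X''$ is meaningful.

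\textbf{Step 2 (Fatou property).} Invoke Theorem \ref{th4.19}, which requires exactly the present hypotheses $p(\cdot)\in\mathcal{P}(\mathbb{R}^d)$ and $1\leq q<\alpha<p(\cdot)$. It yields that every norm-bounded increasing sequence $0\leq f_n\uparrow f$ satisfies $f\in\mathcal{H}(p(\cdot),q,\alpha)$ and $\|f_n\|\to\|f\|$.

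\textbf{Step 3 (conclusion).} Apply Point (3) of Proposition \ref{p2.6}: $X''=X$ if and only if $X$ has the Fatou property. This gives $\mathcal{H}(p(\cdot),q,\alpha)''=\mathcal{H}(p(\cdot),q,\alpha)$, with equality of norms in Zaanen's theory. The only potential obstacle is a bookkeeping one: the space must be viewed inside $L^0$ rather than only inside $L^{p(\cdot)}_{\rm loc}$. This is harmless, because any $f\in L^0$ with $\sigma_{p(\cdot),q,\alpha}(|f|)<\infty$ has an $\mathfrak{h}$-decomposition and hence lies in $L^{p(\cdot)}_{\rm loc}$. All the genuine difficulty sits in Theorem \ref{th4.19}, which is already proved.
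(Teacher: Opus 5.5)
Your proposal is correct and follows exactly the paper's argument: the Fatou property of $\mathcal{H}(p(\cdot),q,\alpha)$ is taken from Theorem \ref{th4.19}, whose hypotheses $p(\cdot)\in\mathcal{P}(\mathbb{R}^d)$, $1\leq q<\alpha<p(\cdot)$ are precisely those of the corollary, and Point (3) of Proposition \ref{p2.6} then gives that the space coincides with its K\"{o}the bidual. Your additional bookkeeping about the K\"{o}the structure, completeness and saturation is harmless, and it makes explicit what the paper's one-line proof leaves implicit.
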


Next, we have the following result.
\begin{prop}\label{p5.1}
		Suppose that $p(\cdot) \in \mathcal{P}(\mathbb{R}^d)$ and  $ p(\cdot) \leq \alpha \leq q \leq \infty$. Then $\left(L^{p(\cdot)},\ell^{q}\right)^{\alpha}$ is the  K\"{o}the dual space  of $\mathcal{H}(p^{\prime}(\cdot),q^{\prime},\alpha^{\prime})$.
\end{prop}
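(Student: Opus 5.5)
We plan to combine the absolute continuity of the norm of $\mathcal{H}(p^{\prime}(\cdot),q^{\prime},\alpha^{\prime})$ with the predual result of Proposition \ref{p2.11}. Set $X=\mathcal{H}(p^{\prime}(\cdot),q^{\prime},\alpha^{\prime})$. Since $p(\cdot)\in\mathcal{P}(\mathbb{R}^d)$, also $p^{\prime}(\cdot)\in\mathcal{P}(\mathbb{R}^d)$, and the hypothesis $p(\cdot)\leq\alpha\leq q$ gives $1\leq q^{\prime}\leq\alpha^{\prime}\leq p^{\prime}(\cdot)$. Hence the results of Section \ref{Sec4} apply to $X$. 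The function norm $\sigma_{p^{\prime}(\cdot),q^{\prime},\alpha^{\prime}}$ is saturated by Proposition \ref{l3.10}, and it is absolutely continuous by Theorem \ref{th3.14}. Point (4)(c) of Proposition \ref{p2.6} then gives $X^{*}=X^{\prime}$: every bounded linear functional on $X$ is of the form $f\mapsto\int fg$ for a unique $g\in X^{\prime}$, with $\|T_g\|_{X^*}=\|g\|_{X^{\prime}}$.

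\textbf{Step 1: $(L^{p(\cdot)},\ell^{q})^{\alpha}\subseteq X^{\prime}$ isometrically.} Take $g\in(L^{p(\cdot)},\ell^{q})^{\alpha}$. By solidity (Proposition \ref{p2.9}(4)), $|g|$ also lies in the space with the same norm. Proposition \ref{p2.11} says $T_{|g|}$ is a bounded functional on $X$ of norm $\|g\|_{p(\cdot),q,\alpha}$, but this bounds $\left|\int f|g|\right|$, not $\int|fg|$. To pass to $\int|fg|$ for $f\in X$, we use solidity of $X$ (Proposition \ref{p2.12}(2)): the function $|f|$ belongs to $X$ with $\||f|\|_X=\|f\|_X$. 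Therefore
\[
\int|fg|=T_{|g|}(|f|)\leq\|g\|_{p(\cdot),q,\alpha}\|f\|_{X}.
\]
This gives $g\in X^{\prime}$ with $\|g\|_{X^{\prime}}\leq\|g\|_{p(\cdot),q,\alpha}$. The reverse inequality follows from
\[
\|g\|_{p(\cdot),q,\alpha}=\|T_g\|_{X^*}=\sup_{\|f\|_X\leq1}\left|\int fg\right|\leq\|g\|_{X^{\prime}}.
\]

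\textbf{Step 2: $X^{\prime}\subseteq(L^{p(\cdot)},\ell^{q})^{\alpha}$.} Let $h\in X^{\prime}$. Then $T_h\in X^{*}$, and the key point is that $T$ in Proposition \ref{p2.11} is onto, so $T_h=T_g$ for some $g\in(L^{p(\cdot)},\ell^{q})^{\alpha}$. It remains to show $h=g$ a.e. Both $g$ and $h$ belong to $X^{\prime}$ by Step 1, and $T_{h-g}=0$. Test against $f=\chi_E\,\overline{\mathrm{sgn}(h-g)}$ for any bounded measurable set $E$; such $f$ lies in $X$ by saturation, as in Proposition \ref{l3.10}, together with solidity. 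This gives $\int_E|h-g|=0$ for all bounded $E$, so $h=g$ a.e.

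\textbf{Main obstacle.} The difficulty lies in the phrase ``isomorphism into'' in Proposition \ref{p2.11}. If this only gives an isometric embedding and not surjectivity, Step 2 needs a separate argument. In that case the plan is to start from $h\in X^{\prime}$ and estimate $\|St^{(\alpha)}_{\rho}h\|_{p(\cdot),q}$ directly by duality. For fixed $\rho$ and a finite set of cubes, use the duality of $L^{p(\cdot)}$ with $L^{p^{\prime}(\cdot)}$ (norm conjugate formula) and of $\ell^{q}$ with $\ell^{q^{\prime}}$. This produces $\varphi$ with $\|\varphi\|_{p^{\prime}(\cdot),q^{\prime}}\leq1$ nearly norming $St^{(\alpha)}_{\rho}h$. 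Then $St^{(\alpha^{\prime})}_{\rho^{-1}}\varphi$ has $X$-norm at most $1$, by the definition of $\mathfrak{h}$-decompositions, and $\int (St^{(\alpha)}_{\rho}h)\varphi=\int h\,St^{(\alpha^{\prime})}_{\rho^{-1}}\varphi$ since $1/\alpha+1/\alpha^{\prime}=1$. The constant coming from the variable-exponent Hölder norm equivalence (cf. \eqref{eq2.4}) must be tracked here. Taking suprema over $\rho$ and over finite sets of cubes then yields $h\in(L^{p(\cdot)},\ell^{q})^{\alpha}$, with norm control up to that constant; the isometry is already given by Step 1.
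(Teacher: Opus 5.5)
Your proposal is correct and takes the same route as the paper. Both arguments combine absolute continuity of the norm of $\mathcal{H}(p^{\prime}(\cdot),q^{\prime},\alpha^{\prime})$ (Theorem~\ref{th3.14}, with Point (4) of Proposition~\ref{p2.6}) with the duality of Proposition~\ref{p2.11}; the paper also reads that duality as onto (it writes $\left(L^{p(\cdot)},\ell^{q}\right)^{\alpha}=\left[\mathcal{H}(p^{\prime}(\cdot),q^{\prime},\alpha^{\prime})\right]^{\ast}$), so your fallback is not needed. Your Steps 1 and 2 just spell out the identifications that the paper's one-line chain leaves implicit, and once surjectivity of $T$ is granted they do not actually use the equality $X^{*}=X^{\prime}$.
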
 

\begin{proof}
	Note that the assumption  $ p(\cdot) \leq \alpha \leq q \leq \infty$ implies that  $1 \leq q^{\prime}\leq \alpha^{\prime} \leq p^{\prime}(\cdot) $. Therefore, since $\mathcal{H}(p^{\prime}(\cdot),q^{\prime},\alpha^{\prime})$ has an absolutely continuous norm (see  Theorem \ref{th3.14}), it follows from Point (4) of Proposition \ref{p2.6} that its  K\"{o}the dual space  $\left[\mathcal{H}(p^{\prime}(\cdot),q^{\prime},\alpha^{\prime})\right]^{\prime}$ coincides with its topological dual space $\left[\mathcal{H}(p^{\prime}(\cdot),q^{\prime},\alpha^{\prime})\right]^{\ast}$. Hence, by Point (1) of Proposition \ref{p2.11}, we get 
	\begin{align*}
\left(L^{p(\cdot)},\ell^{q}\right)^{\alpha}	=\left[\mathcal{H}(p^{\prime}(\cdot),q^{\prime},\alpha^{\prime})\right]^{\ast}=\left[\mathcal{H}(p^{\prime}(\cdot),q^{\prime},\alpha^{\prime})\right]^{\prime}.
	\end{align*}
	The proof is complete.
\end{proof}

\begin{prop}\label{p5.2}
		Suppose that $p(\cdot)\in \mathcal{P}(\mathbb{R}^d)$ and  $ p(\cdot) < \alpha < q \leq \infty$. Then  the  K\"{o}the dual space  of $\left(L^{p(\cdot)},\ell^{q}\right)^{\alpha}$ is $\mathcal{H}(p^{\prime}(\cdot),q^{\prime},\alpha^{\prime})$.
\end{prop}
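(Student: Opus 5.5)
The plan is to combine Proposition \ref{p5.1} with the Fatou property of $\mathcal{H}$ and use Point (3) of Proposition \ref{p2.6}. Set $X:=\mathcal{H}(p^{\prime}(\cdot),q^{\prime},\alpha^{\prime})$. The hypothesis $p(\cdot)<\alpha<q\leq\infty$ with $p(\cdot)\in\mathcal{P}(\mathbb{R}^d)$ gives $1\leq q^{\prime}<\alpha^{\prime}<p^{\prime}(\cdot)$ and $p^{\prime}(\cdot)\in\mathcal{P}(\mathbb{R}^d)$, since $1<(p^{\prime})_{-}\leq(p^{\prime})_{+}<\infty$ follows from $1<p_{-}\leq p_{+}<\infty$. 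These are exactly the hypotheses of Theorem \ref{th4.19} (and of Corollary \ref{cor5.2}) applied to the triple $(p^{\prime}(\cdot),q^{\prime},\alpha^{\prime})$. Hence $X$ is a normed Köthe space with the Fatou property, and it coincides with its Köthe bidual: $X^{\prime\prime}=X$ with equal norms.

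On the other hand, $p(\cdot)\leq\alpha\leq q$ holds, so Proposition \ref{p5.1} gives $X^{\prime}=\left(L^{p(\cdot)},\ell^{q}\right)^{\alpha}$. I need this with equality of norms, not just as sets. This follows from the isometry in Proposition \ref{p2.11} together with the identification $X^{\prime}=X^{*}$, which is isometric by the remark following the definition of the Köthe dual. Consequently, for $g\in L^{0}$,
\begin{align*}
\|g\|_{\left[\left(L^{p(\cdot)},\ell^{q}\right)^{\alpha}\right]^{\prime}}=\sup\left\{\int_{\mathbb{R}^d}|fg|\,dx : \|f\|_{X^{\prime}}\leq 1\right\}=\|g\|_{X^{\prime\prime}}.
\end{align*}
Therefore $\left[\left(L^{p(\cdot)},\ell^{q}\right)^{\alpha}\right]^{\prime}=X^{\prime\prime}=X$, and the norms agree.

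The main obstacle is this step of transferring the norm. I must check that the norm of $\left(L^{p(\cdot)},\ell^{q}\right)^{\alpha}$ equals the Köthe-dual norm $\|\cdot\|_{X^{\prime}}$ exactly, and not merely up to equivalence. The route is as follows. By Theorem \ref{th3.14}, $X$ has absolutely continuous norm, so Point (4)(c) of Proposition \ref{p2.6} gives $X^{*}=X^{\prime}$ isometrically. Proposition \ref{p2.11} says that $g\mapsto T_g$ is an isometric isomorphism onto $X^{*}$. Composing the two gives $\|g\|_{p(\cdot),q,\alpha}=\|g\|_{X^{\prime}}$.

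If only equivalence of norms were available, the argument would still identify the spaces as sets with equivalent norms, which is enough for the stated set equality. The strict inequalities $q^{\prime}<\alpha^{\prime}<p^{\prime}(\cdot)$ are used only to invoke the Fatou property of $X$.
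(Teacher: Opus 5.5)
Your proposal is correct and follows the paper's argument. The Fatou property of $\mathcal{H}(p'(\cdot),q',\alpha')$ (Theorem \ref{th4.19} via Corollary \ref{cor5.2}, i.e.\ Point (3) of Proposition \ref{p2.6}) gives $X''=X$, Proposition \ref{p5.1} gives $X'=\left(L^{p(\cdot)},\ell^{q}\right)^{\alpha}$, and hence $\left[\left(L^{p(\cdot)},\ell^{q}\right)^{\alpha}\right]'=X''=X$. Your extra check on equality of norms (absolute continuity plus the isometry of Proposition \ref{p2.11}) is a sound refinement that the paper leaves implicit, and you rightly cite Corollary \ref{cor5.2} where the paper's text refers to Corollary \ref{cor5.1}.
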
 

\begin{proof}
Note that the assumption  $ p(\cdot) < \alpha < q \leq \infty$ implies that  $1 \leq q^{\prime}< \alpha^{\prime} < p^{\prime}(\cdot) $. Corollary \ref{cor5.1} asserts that $\mathcal{H}(p^{\prime}(\cdot),q^{\prime},\alpha^{\prime})$  coincides with its K\"{o}the bidual space  $\left[\mathcal{H}(p^{\prime}(\cdot),q^{\prime},\alpha^{\prime})\right]^{\prime \prime}$. Therefore, using  Proposition \ref{p5.1}, we obtain 
		\begin{align*}
		\left[ \left(L^{p(\cdot)},\ell^{q}\right)^{\alpha} \right]^{\prime}=\left[\mathcal{H}(p^{\prime}(\cdot),q^{\prime},\alpha)\right]^{\prime \prime}=\mathcal{H}(p^{\prime}(\cdot),q^{\prime},\alpha^{\prime}),
	\end{align*}
	as desired.
\end{proof}

As a consequence of Proposition \ref{p5.2}, we identify a predual space of $\mathcal{H}(p(\cdot),q,\alpha)$.
\begin{prop}
		Suppose that $p(\cdot)\in \mathcal{P}(\mathbb{R}^d)$ and  $ p(\cdot) < \alpha < q \leq \infty$. Then $\left(L^{p(\cdot)},\ell^{q}\right)^{\alpha}_{\rm a}$ is a predual of the  space $\mathcal{H}(p^{\prime}(\cdot),q^{\prime},\alpha^{\prime})$.
\end{prop}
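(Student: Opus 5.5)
The plan is to apply Point (4)(d) of Proposition \ref{p2.6} to the normed K\"{o}the space $X=\left(L^{p(\cdot)},\ell^{q}\right)^{\alpha}$. Taken together with Proposition \ref{p5.2}, which identifies $X'$ with $\mathcal{H}(p^{\prime}(\cdot),q^{\prime},\alpha^{\prime})$, that point gives
\begin{align*}
\left[\left(L^{p(\cdot)},\ell^{q}\right)^{\alpha}_{\rm a}\right]^{*}=\left[\left(L^{p(\cdot)},\ell^{q}\right)^{\alpha}\right]^{\prime}=\mathcal{H}(p^{\prime}(\cdot),q^{\prime},\alpha^{\prime}),
\end{align*}
with the duality pairing $g\mapsto\int fg$. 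This is exactly the statement that $X_{\rm a}$ is a predual of $\mathcal{H}(p^{\prime}(\cdot),q^{\prime},\alpha^{\prime})$. The whole argument therefore reduces to checking the hypotheses of Point (4)(d).

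First, the function norm $\|\cdot\|_{p(\cdot),q,\alpha}$ must be saturated, since Point (4) presupposes this. Take a measurable set $E$ with $|E|>0$ and choose a bounded subset $F\subset E$ with $|F|>0$, for example $F=E\cap[-n,n)^{d}$ for $n$ large. Then $\chi_{F}\in L^{\alpha}$ because $\alpha<\infty$, which holds since $\alpha<q\le\infty$. Proposition \ref{p3.2} then gives $\|\chi_F\|_{p(\cdot),q,\alpha}\le 2\|\chi_F\|_{\alpha}<\infty$.

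Second, the carrier of $X_{\rm a}$ must be all of $\mathbb{R}^d$. By Theorem \ref{th3.3}, $X_{\rm a}$ is the closure of $\mathcal{C}_{\rm c}^{\infty}$ in $X$, so in particular it contains $\mathcal{C}_{\rm c}^{\infty}$. Suppose every element of $X_{\rm a}$ vanishes a.e.\ on a set $F$. For each $n$, pick $\varphi_n\in\mathcal{C}_{\rm c}^{\infty}$ with $\varphi_n=1$ on $[-n,n]^{d}$. Each $\varphi_n$ lies in $X_{\rm a}$ and vanishes a.e.\ on $F$, which forces $|F\cap[-n,n]^{d}|=0$ for all $n$, hence $|F|=0$. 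So the carrier is the whole of $\mathbb{R}^{d}$.

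Third, Point (4)(d) now yields $X_{\rm a}^{*}=X'$ isometrically, and Proposition \ref{p5.2} identifies $X'$ with $\mathcal{H}(p^{\prime}(\cdot),q^{\prime},\alpha^{\prime})$.

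The only delicate point is keeping the identification isometric with the correct norm. Proposition \ref{p5.2} gives equality of the spaces $X'$ and $\mathcal{H}(p^{\prime}(\cdot),q^{\prime},\alpha^{\prime})$, but the norm $\|\cdot\|_{X'}$ has to agree with $\|\cdot\|_{\mathcal{H}(p^{\prime}(\cdot),q^{\prime},\alpha^{\prime})}$. This follows from the Fatou property of $\mathcal{H}$ (Theorem \ref{th4.19} applied with exponents $q'<\alpha'<p'(\cdot)$), since the Fatou property gives $\mathcal{H}=\mathcal{H}''$ isometrically. I would also need to check that the present exponent hypotheses, $p(\cdot)\in\mathcal{P}(\mathbb{R}^d)$ and $p(\cdot)<\alpha<q$, give $p'(\cdot)\in\mathcal{P}(\mathbb{R}^d)$ and $q'<\alpha'<p'(\cdot)$, which is what those results require.
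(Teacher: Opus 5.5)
Your proposal is correct and follows essentially the same route as the paper: it shows that the carrier of $\left(L^{p(\cdot)},\ell^{q}\right)^{\alpha}_{\rm a}$ is all of $\mathbb{R}^d$, then combines Point (4)(d) of Proposition~\ref{p2.6} with Proposition~\ref{p5.2}. The differences are minor. You detect the carrier with $\mathcal{C}_{\rm c}^{\infty}$ bump functions instead of the paper's $\chi_E\in L^{\alpha}$ with $0<|E|<\infty$. You also make explicit the saturation check and the isometric identification via the Fatou property of $\mathcal{H}(p^{\prime}(\cdot),q^{\prime},\alpha^{\prime})$ (Theorem~\ref{th4.19}); that Fatou property is what the paper's proof of Proposition~\ref{p5.2} actually needs, although it cites Corollary~\ref{cor5.1} where Corollary~\ref{cor5.2} is meant.
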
 

\begin{proof}
	The carrier of $\left(L^{p(\cdot)},\ell^{q}\right)^{\alpha}_{\rm a}$ is the whole set $\mathbb{R}^{d}$. Indeed, any
	measurable subset $\Omega$ of $\mathbb{R}^{d}$ with $|\Omega|>0$, contains a measurable subset $E$ such that $0<|E|<\infty$ and, $\chi_{E}$ belongs to $L^{\alpha}$ and therefore to $\left(L^{p(\cdot)},\ell^{q}\right)^{\alpha}_{\rm a}$.
	Thus, Point (4) of Proposition \ref{p2.6} and Proposition \ref{p5.2} show that 
\begin{align*}
	\left[\left(L^{p(\cdot)},\ell^{q}\right)^{\alpha}_{\rm a}\right]^{\ast}= \left[\left(L^{p(\cdot)},\ell^{q}\right)^{\alpha}\right]^{\prime}=\mathcal{H}(p^{\prime}(\cdot),q^{\prime},\alpha^{\prime}).
\end{align*}
This ends the proof.
\end{proof}

We end this subsection by the following result.

\begin{prop}
	Let us assume that $p(\cdot) \in \mathcal{P}(\mathbb{R}^d)$ and $ p(\cdot) <  \alpha < q \leq \infty$. Then both $\left(L^{p(\cdot)},\ell^{q}\right)^{\alpha}$ and $\mathcal{H}(p'(\cdot),q',\alpha')$ are not reflexive.
\end{prop}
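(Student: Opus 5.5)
The plan is to use the reflexivity criterion in Point (4)(e) of Proposition \ref{p2.6}: a normed K\"{o}the space $X$ is reflexive if and only if both $X$ and $X'$ have absolutely continuous norms and satisfy the Fatou property. For $X=\left(L^{p(\cdot)},\ell^{q}\right)^{\alpha}$ it therefore suffices to show that $X$ fails to have an absolutely continuous norm. For $Y=\mathcal{H}(p'(\cdot),q',\alpha')$ we will use its K\"{o}the dual, which is $X$ by Proposition \ref{p5.1}, and the fact that this dual does not have an absolutely continuous norm.

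First, observe that the saturation hypothesis needed in Point (4) of Proposition \ref{p2.6} holds for both spaces. For $Y$ this is Proposition \ref{l3.10}. For $X$ it follows from Proposition \ref{p3.2}: every set of positive measure contains a subset $E$ of finite positive measure, and then $\chi_E\in L^{\alpha}\subset X$.

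Second, we show that $X\neq X_{\rm a}$. This is Remark \ref{R3.4}, and it is the only substantive point of the argument. If one wants to verify it directly rather than cite \cite[Remark 4.5]{Dia-Fof-2021}, the natural candidate is the homogeneous function $f(x)=|x|^{-d/\alpha}$. The first task is to check $f\in X$. Since $St^{(\alpha)}_\rho f=f$ for every $\rho>0$, one has $\|f\|_{p(\cdot),q,\alpha}=\|f\|_{p(\cdot),q}$. Near the origin, local $L^{p(\cdot)}$-integrability uses $p(\cdot)<\alpha$ together with $p_+<\alpha$; away from the origin, summability of $\|f\chi_{I_k^1}\|_{p(\cdot)}\approx|k|^{-d/\alpha}$ in $\ell^q$ uses $q>\alpha$. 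The second task is to show $f\notin X_{\rm a}$. Take $E_n=\{|x|<1/n\}$, a decreasing sequence with null intersection. Then $St^{(\alpha)}_{n}(f\chi_{E_1})=f\chi_{E_n}$, so
\[
\|f\chi_{E_n}\|_{p(\cdot),q,\alpha}=\|f\chi_{E_1}\|_{p(\cdot),q,\alpha}>0
\]
for all $n$. By Point (4)(b) of Proposition \ref{p2.6}, $f$ is not of absolutely continuous norm. The main obstacle is integrability of $f$ near $0$ when $p(\cdot)$ approaches $\alpha$. If $p_+=\alpha$ this can fail, and one should instead use a lacunary sum of dilated bumps, as in the classical construction, possibly after appealing to Remark \ref{R3.4} as stated.

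Third, we conclude. Since $X$ does not have an absolutely continuous norm, Point (4)(e) of Proposition \ref{p2.6} shows that $X$ is not reflexive. For $Y$, note that $p(\cdot)<\alpha<q$ gives $1\le q'<\alpha'<p'(\cdot)$. By Proposition \ref{p5.1}, $Y'=X$, which does not have an absolutely continuous norm. Point (4)(e) of Proposition \ref{p2.6} then shows that $Y$ is not reflexive. Alternatively, if $Y$ were reflexive, then its dual $Y^*=Y'=X$ (Proposition \ref{p5.1}) would be reflexive, contradicting what was just proved.
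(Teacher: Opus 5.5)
Your proposal is correct and follows the paper's argument: Point (4)(e) of Proposition \ref{p2.6}, combined with the failure of absolute continuity of the norm of $\left(L^{p(\cdot)},\ell^{q}\right)^{\alpha}$ (Remark \ref{R3.4}) and transferred to $\mathcal{H}(p'(\cdot),q',\alpha')$ by duality. Your version is slightly leaner, since it only needs $\mathcal{H}(p'(\cdot),q',\alpha')'=\left(L^{p(\cdot)},\ell^{q}\right)^{\alpha}$ from Proposition \ref{p5.1}, and not Proposition \ref{p5.2} or the Fatou property of $\mathcal{H}$ from Theorem \ref{th4.19}. There is a harmless slip in your optional direct check: $St^{(\alpha)}_{n}(f\chi_{E_1})=f\chi_{\{|x|<n\}}$, so you want $St^{(\alpha)}_{1/n}$, which gives $f\chi_{E_n}$ and the same conclusion because $\|\cdot\|_{p(\cdot),q,\alpha}$ is invariant under every dilation $St^{(\alpha)}_{\rho}$.
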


\begin{proof}
	Under the condition $ p(\cdot) <  \alpha < q \leq \infty$, Propositions \ref{p5.1} and \ref{p5.2} show that  each of both the spaces $\left(L^{p(\cdot)},\ell^{q}\right)^{\alpha}$ and $\mathcal{H}(p'(\cdot),q',\alpha')$ is the K\"{o}the dual space  of the other. These two spaces satisfy the Fatou property, but $\left(L^{p(\cdot)},\ell^{q}\right)^{\alpha}$ fails to have absolutely continuous norm (see Remark \ref{R3.4}). Thus, it follows from Point 4(e) of Proposition \ref{p2.6} that both $\left(L^{p(\cdot)},\ell^{q}\right)^{\alpha}$ and $\mathcal{H}(p'(\cdot),q',\alpha')$ are not reflexive.
\end{proof}

\section{Conclusion} \label{Sec6}
 Let $1\leq p(\cdot) \leq \alpha \leq q\leq \infty$. In this paper, we have systematically investigated the normed K\"{o}the structure of  Fofana spaces with variable exponent $(L^{p(\cdot)}, \ell^{q})^{\alpha}$ as well as their predual spaces $\mathcal{H}(p^{\prime}(\cdot),q^{\prime},\alpha^{\prime})$. Under suitable conditions on exponents, we focused on the study of two fundamental properties. On the one hand, we have established that both $(L^{p(\cdot)}, \ell^{q})^{\alpha}$ and $\mathcal{H}(p^{\prime}(\cdot),q^{\prime},\alpha^{\prime})$ satisfy the Fatou property. On the other hand, we have proved that the space $(L^{p(\cdot)}, \ell^{q})^{\alpha}$ does not have an absolutely continuous norm, whereas $\mathcal{H}(p^{\prime}(\cdot),q^{\prime},\alpha^{\prime})$  has an absolutely continuous norm. Moreover, we have showed that the space $\mathcal{H}(p^{\prime}(\cdot),q^{\prime},\alpha^{\prime})$ is separable. As consequences, we obtained some duality results between these spaces, including characterizations of their K\"{o}the duals, their K\"{o}the biduals and those of a predual of the space $\mathcal{H}(p^{\prime}(\cdot),q^{\prime},\alpha^{\prime})$. Finally, we have proved that neither  $(L^{p(\cdot)}, \ell^{q})^{\alpha}$ nor $\mathcal{H}(p^{\prime}(\cdot),q^{\prime},\alpha^{\prime})$ is reflexive. 
 
 Our present study contributes to a deeper understanding of the normed K\"{o}the space structure of variable-exponent Fofana spaces and their preduals. In particular, the results established herein provide a solid analytical foundation for extending several classical theorems in harmonic analysis to this broader framework.
 For instance, as a direction for future research, we plan to investigate the interpolation spaces associated with variable-exponent Fofana spaces and their preduals. Such an investigation may provide further insight into their functional-analytic structure and contribute to the development of interpolation theory in this setting.



\end{document}